\newtheorem{thm}{Theorem}
\newtheorem{lemma}{Lemma}
\newtheorem{corollary}{Corollary}
\newtheorem{example}{Example}
\def\Z{\mathbb{Z}}
\def\P{{\mathbb P}}     
\def\E{{\mathbb E}}
\definecolor{gr}{rgb}{0.03, 0.7, 0.29}
\journal{arXiv}
\begin{document}

\begin{frontmatter}



\title{Correlation of coalescence times in a diploid Wright-Fisher model with recombination and selfing}


\author[1]{David Kogan\texorpdfstring{\corref{equal}}}
\ead{dkogan1@sas.upenn.edu}
\author[2]{Dimitrios Diamantidis\texorpdfstring{\corref{equal}}}
\ead{didiaman@iu.edu}
\author[3]{John Wakeley}
\ead{wakeley@fas.harvard.edu}
\author[2,3]{Wai-Tong (Louis) Fan\texorpdfstring{\corref{cor}}}
\ead{waifan@iu.edu}
\cortext[equal]{These authors contributed equally to this work}
\cortext[cor]{Corresponding author}

\affiliation[1]{organization={Department of Mathematics, University of Pennsylvania},
            addressline={209 South 33rd Street}, 
            city={Philadelphia},
            postcode={19104}, 
            state={PA},
            country={USA}}

\affiliation[2]{organization={Department of Mathematics, Indiana University},
            addressline={831 East 3rd St}, 
            city={Bloomington},
            postcode={47405}, 
            state={IN},
            country={USA}}

\affiliation[3]{organization={Department of Organismic and Evolutionary Biology, Harvard University},
            addressline={16 Divinity Ave}, 
            city={Cambridge},
            postcode={02138}, 
            state={MA},
            country={USA}}

\begin{abstract}
The correlation among the gene genealogies at different loci is crucial in  biology, yet challenging to understand because
such correlation depends on many factors including genetic linkage, recombination, natural selection and population structure.
Based on a diploid Wright-Fisher model with a single mating type and partial selfing for a constant large population with size $N$, we quantify the combined effect of genetic drift and two competing factors, recombination and selfing, on the correlation of coalescence times at two linked loci for samples of size two.
Recombination decouples the genealogies at different loci and decreases the correlation while selfing increases the correlation. We obtain explicit asymptotic formulas for the correlation for four scaling scenarios that depend on whether the selfing probability and the recombination probability are of order $O(1/N)$ or $O(1)$ as $N$ tends to infinity.
Our analytical results 
confirm that the asymptotic lower bound in [King, Wakeley, Carmi (TPB 2018)] 
is sharp when the loci are unlinked and when there is no selfing,  
and provide a number of new formulas for other scaling scenarios that have not been considered before. 
We present asymptotic results for the variance of Tajima's estimator
of the population mutation rate for infinitely many loci
as $N$ tends to infinity.  When the selfing probability is of order 
$O(1)$ and is
equal to a positive constant $s$ for all $N$ and if the samples at both loci are in the same individual, then the variance of the Tajima's estimator tends to $s/2$ (hence remains positive) even when  the recombination rate, the number of loci and the population size all tend to infinity. 
\end{abstract}




\begin{keyword}

Recombination \sep selfing  \sep Wright-Fisher model \sep diploid population \sep coalescent \sep  asymptotic analysis



\end{keyword}

\end{frontmatter}


\section{Introduction} \label{sec:intro}

Population geneticists make frequent use of stochastic models to represent the dynamics and evolution of gene frequencies in finite populations. The most classical models are the Wright-Fisher model \citep{Fisher1922,Fisher1930b,Wright1931} and the Moran model 
\citep{moran_1958,Moran1962}, 
which differ in their treatment of generations —discrete and non-overlapping in the former, and overlapping in the latter. 
Many extensions and  variations of such models were developed over the past century to incorporate selection, mutation, general offspring distributions, varying population size, spatial movement and other details. Such models include the Cannings exchangeable models \citep{cannings1,cannings2} for haploid populations, measure-valued processes \citep{ethier1993fleming}, exchangeable diploid models \cite{BirknerEtAl2018}.  
For detailed overviews, see \citet{Ewens2004} and a  recent paper by  \citet{etheridge2019genealogical}.

While classical population genetics theory focuses on forward-in-time models, 
current theory emphasizes a retrospective approach  which focuses on the backward-in-time ancestry of a sample of homologous gene copies, alleles or halpotypes from a population. This shift of focus to a retrospective approach is motivated by the relevance of coalescent theory to genomics data and statistical inference. 
Since the seminal work of \cite{Kingman1982235,Kingman1982b},
the Kingman coalescent is widely used in the description of gene genealogies 
and in the derivation of sampling probabilities \citep{Hudson1983a,Hudson1983b,Tajima1983, Ewens1990}.
Mathematically, it was shown that the Kingman coalescent arise as a robust scaling limit  under the Cannings model for general offspring distribution that is is not too skewed  \citep{Mohle1998a,Mohle1998c}. In the past decades, other scaling limits such as the multiple merger coalescent and simultaneous multiple merger coalescent were deduced from exchangeable population models. See  \cite{wakeley2009coalescent} for an introduction  and \cite{berestycki2009recent} for a probabilistic treatment on coalescent theory.

The genealogies of a population 
trace the evolutionary relationships among haplotypes (alleles) in the population. They are influenced  by many  factors including selection, fluctuation in population size,  population substructure and various reproduction mechanisms. 
These factors can interact and influence each other, making the correlation among genealogies complex and variable across different loci and populations.
Here, we investigate the {\it combined effects} of recombination and partial selfing on the correlation of pairwise coalescence times. 
While the separate effect of recombination and of selfing on  gene genealogies 
have been extensively studied, their joint effects are much less explored \citep{nordborg2000linkage}. 
Which factor has a stronger effect on determining the correlation? How do these two competing factorss act together to shape the gene genealogies across loci? 
We examine  these questions by focusing on a sample of size two at two loci under four different scenarios, or scaling regimes, for the relative strengths of recombination and selfing.  Namely, we consider Scenario $(i)$: $(s_N,r_N)=(\tilde{\sigma}/N, \tilde{\rho}/N)$,  Scenario $(ii)$:  $(s_N,r_N)=(s, \tilde{\rho}/N)$, Scenario $(iii)$: $(s_N,r_N)=(\tilde{\sigma}/N, r)$, and Scenario $(iv)$: $(s_N,r_N)=(s, r)$, where $s,r,\tilde{\sigma},\tilde{\rho}$ are constants that do not depend on $N$.

The gene genealogies, even at unlinked loci, are correlated because all loci share  the same underlying pedigree \citep{king2018non, Bursts_DFBW2023}.  Note, with respect to the approach taken in \citet{Bursts_DFBW2023}, here we obtain results by averaging over outcomes of reproduction rather than by conditioning on the pedigree of the population, so our results may be compared with previous findings.  We compare our results with those of \cite{king2018non}, who derived an analytical formula for the correlation coefficient for two unlinked loci under a 2-sex diploid discrete-time Wright-Fisher model (DDTWF), in terms of the  sampling configuration and the finite population size $N$. They obtained an asymptotic lower bound of $1/(12 N)$ for the correlation coefficient; see
equations (14) and (5) of \cite{king2018non}. This quantitative bound for the correlation implies an approximate  
positive lower bound for the variance of Tajima's estimator for infinitely many loci in a large population of \textit{finite} size $N$, even when  the number of independently segregating loci approaches infinity.


We build on the work of \cite{king2018non} by including  partial selfing in addition to recombination, and we significantly broaden the asymptotic regimes  in the parameter space by 
establishing asymptotics for the correlation in four scenarios depending on whether the selfing probability $s_N$ and the recombination probability $r_N$ are of order $O(1/N)$ or order $O(1)$ as $N\to\infty$. We do this by describing a Markov chain for the two-locus ancestral process, where the configuration of the sample gives the initial state; twelve are possible. We obtain the correlations for all twelve and highlight three that are most biologically relevant, including the one we denote $q_{12}$ which corresponds to sampling two pairs of gene copies on both   chromosomes of a single individual. This initial state was considered in \cite{king2018non}.
The aforementioned result in \cite{king2018non} corresponds to the case when the  recombination probability $r_N=1/2$, selfing probability $s_N=0$ and the initial condition is $q_{12}$ in this paper. 
Indeed, in equation \eqref{GeneralizedKingetal} below,  we generalize this result to incorporate partial selfing and we show that the factor $1/12$ in the above lower bound is sharp in the sense that the asymptotic is equal to $1/(12N)$ as $N\to\infty$. Our result \eqref{GeneralizedKingetal} therefore also implies a matching asymptotic upper bound for the variance of Tajima's estimator. Furthermore, 
we find that when  the per-generation selfing probability $s_N=s$ for all $N$ (in particular, of order $O(1)$),
the asymptotic correlation coefficients 
stays positive and tends to $s/2$, even when the recombination rate tends to infinity.
This implies that the variance of the Tajima's estimator for infinitely many loci stays positive, even when the recombination rate and the population size $N$ both tend to infinity; see \eqref{Positive_Var} below. This strengthens the previous conclusion that Tajima's estimator is not consistent. 








Previous work on recombination and partial selfing includes \citet{GoldingAndStrobeck1980} who obtained probabilities of identity by descent for samples of size two at two linked loci using single-generation recursive equations, assuming infinite-alleles mutation and reproduction in a finite Wright-Fisher population with partial selfing.  For our Scenario $(ii)$, they discovered a correspondence with the same probabilities of identity by descent previously known for a randomly mating population \citep{StrobeckAndMorgan1978} that is with an effective population size $N_e=N(2-s)/2$ and an effective mutation rate $r_e=r2(1-s)/(2-s)$, so $N_er_e=N(1-s)$.  \citet{vitalis_couvet_2001} studied how population subdivision and migration affect such identity probabilities.  \citet{nordborg2000linkage} reframed and extended the ideas in \citet{GoldingAndStrobeck1980} to describe a coalescent process for larger samples, with the same re-scaling of recombination and with a coalescent rate $2/(2-s)$ times faster than that for a randomly mating population of the same size.  \citet{pollak1987theory} established $N_e=N(2-s)/2$ for single-locus evolution forward in time and \citet{Mohle1998a} proved convergence to the corresponding Kingman coalescent process with time measured in units of $2N_e$ generations.  We do not consider selection here but we note that, because the effective recombination rate depends on $s$, a number of authors have studied the evolutionary interactions of recombination, selfing and selection \citep{roze2005self,Roze2015,Hartfield2016,Roze2016,RYBNIKOV2021110849,Rybnikov2021,roze2022,Sianta2023}.


The structure of this paper is as follows.  In Section \ref{sec:Models}, we describe first the discrete diploid Wright-Fisher model and then the coalesce times ($T_i$ and $T_j$) at the two loci. In Section \ref{S:Tresults},
we first recall single-locus asymptotic results and the effect of partial selfing, then state our main analytical results for the covariance of $T_i$ and $T_j$. In Section  \ref{S:interpretation} we consider the correlation coefficients in Corollary \ref{C:q12q11} and  offer some interpretation of the complicated formulas, and we mention their implication on the Tajima's estimator.
In Section \ref{S:simulations} we present our Monte Carlo simulations that confirm our results from Corollary \ref{C:q12q11}. The discussion Section \ref{S:Discussion} focuses on what may be lacking in this paper and implications for future work. Proofs of our analytical results are in the Appendix.

\section{Models and methods} \label{sec:Models}  

\subsection{The population model} \label{S:Model} 

We consider a one-sex diploid Wright-Fisher model of constant size $N$, in which the population consists of discrete non overlapping generations. 
We model the ancestry of a random sample of size two at each of two loci.
There are two parameters, $s_N\in [0,1]$ and $r_N\in[0,1]$, called  the selfing probability and the recombination probability respectively. 
The population dynamics is as follows:
\begin{enumerate}
    \item (Parents assignment) Individuals in past generation $g+1$ are parents of individuals in past generation $g$. At generation $g\in \mathbb Z_+=\{0,1,2,\cdots\}$, each child  selects either one parent with probability $s_N$ or two distinct parents uniformly from generation $g+1$ with probability $1-s_N$.
    \item (Chromosomes of a child) The chromosomes of each child are determined by Mendelian inheritance from the chromosomes of the parent(s).  Each chromosome of a child is a non-recombinant with probability $1-r_N$, and is a recombinant with probability $r_N$, independent of any other chromosomes. If it is a non-recombinant,
 it is uniformly chosen from the two parental chromosomes; if it is a recombinant, then one locus came from a uniformly chosen parental chromosome and the other locus came from the other parental chromosome.    

\end{enumerate}

For example, consider the event that the child is produced by selfing and there is no recombination for either chromosome of the child (top-left case in \textbf{Figure \ref{Fig:Chrom+loci}}). The probability of this event is
$s_N (1-r_N)^2$. Furthermore, given the occurrence of this event, each chromosome of the child picks a parental chromosome uniformly at random, independently; hence coalescent of the two chromosomes of the child occurs in one generation with conditional  probability $1/2$. 

\FloatBarrier
\begin{figure}[ht]
    \centering
    \includegraphics[scale=0.22]{ 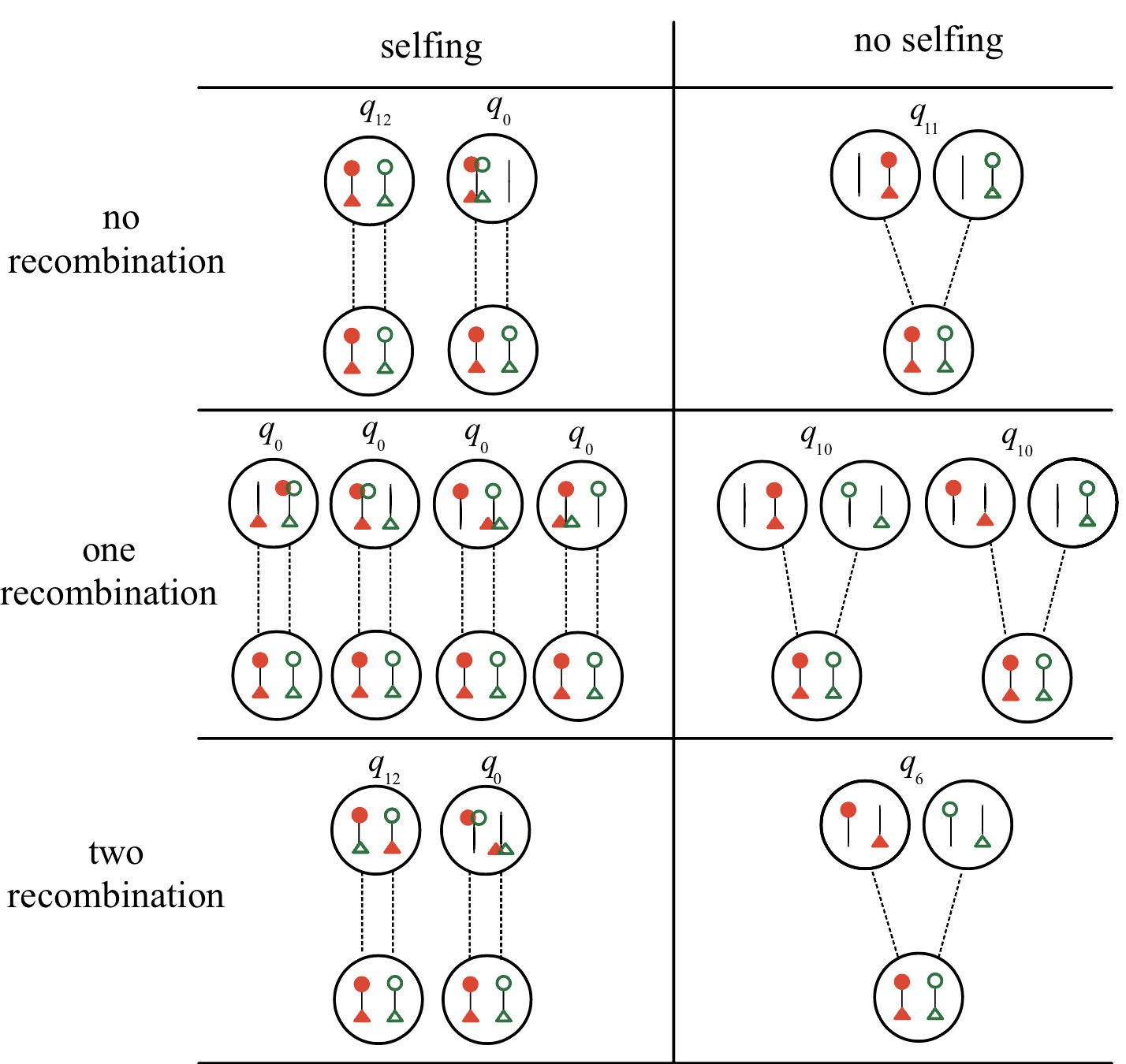}
    \caption{All possible one-step transitions, backward-in-time, for two pairs of gene copies at respectively two loci  for the child (the bottom big circle), starting at state $q_{12}$ (see \ref{A:States}).  These possibilities are organized into 6 cells with 3 cells in each of the left column and the right column. All possibilities within each cell are equality probable.
    [{\bf Left Column}] All possibilities given that selfing occured for the child. The 3 events have probabilities $s_N(1-r_N)^2$, $2s_Nr_N(1-r_N)$ and $s_Nr_N^2$ respectively from top to bottom. [{\bf Right Column}] All possibilities given that the child has two parents.  The 3 events have probabilities $(1-s_N)(1-r_N)^2$, $2(1-s_N)r_N(1-r_N)$ and $(1-s_N)r_N^2$ respectively from top to bottom.     
    }
    \label{Fig:Chrom+loci}
\end{figure}


\subsection{Ancestral process  at two loci} \label{S:ancrestralProcess} 

\FloatBarrier


We consider two loci $i$ and $j$. For each $k\in\{i,j\}$, we consider two distinct gene copies  and we let $T_k$ be the coalescence time for the gene copies at locus $k$, measured in generations. 

The ancestral process can be represented by a discrete-time Markov chain whose state at time $g\in \Z_+$ is the state of the two gene copies at generations $g$ in the past. The fact that it is Markovian follows from the argument in \cite[Appendix]{Bursts_DFBW2023}. 
Let $q_0$ be the state in which coalescence occurs at one or both of the loci. Then  there are  13 possible states $\{q_i\}_{i=0}^{12}$ and they are listed in \ref{A:States}.
The transition probabilities of this Markov chain among the states  $\{q_i\}_{i=0}^{13}$ are computed in \ref{A:Matrix}.

\textbf{Figure \ref{Fig:TiTj}} illustrates a realization of the ancestral process starting at the state $q_{12}$. In this figure, $T_i=2$ and $T_j=3$, where  $\bf{\color{red}\bullet}$ and ${\color{OliveGreen}\circ}$ represent the two gene copies at locus $i$, and ${\color{red}\blacktriangle}$ and  ${\color{OliveGreen}\triangle}$ represent the two gene copies at  locus $j$.

    \begin{figure}[htp]
    \begin{center}
        \includegraphics[scale=0.1]{ 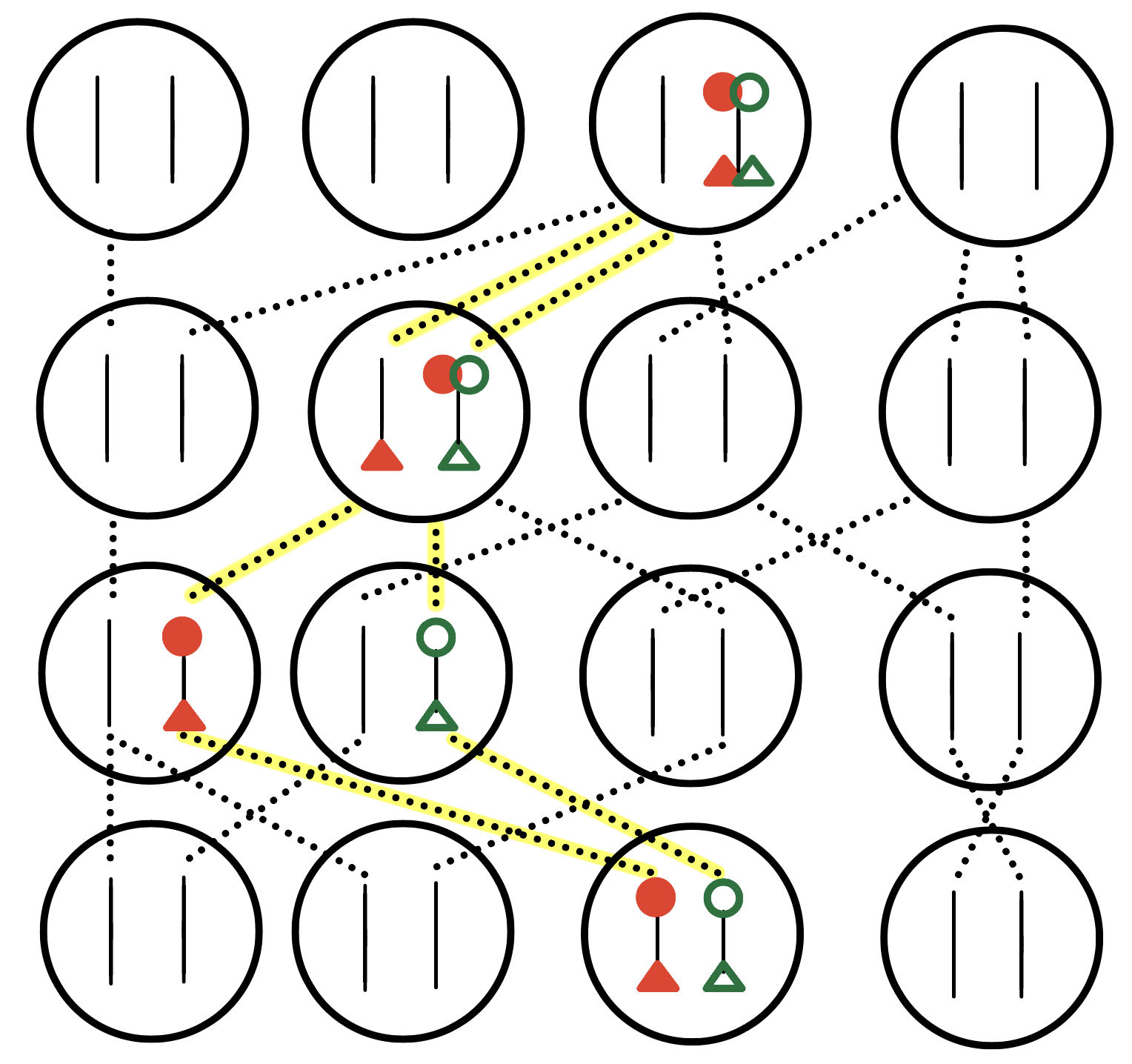}
    \end{center}
    \caption{The two gene copies at locus $i$, represented by the small circles, coalesce at generation $T_i=2$ in the past; while the two gene copies at locus  $j$, represented by  the triangles, coalesce at generation $T_{j}=3$ in the past. The six relevant lineages are the dashed lines highlighted in yellow. The initial (present) state of the two pairs of gene copies is in state 
    $
    {
\setlength{\arraycolsep}{2pt}
q_{12}=\begin{pmatrix}
        {\color{red} \bullet }&   {\color{red} \bullet }  
        \\[-0.2cm]
         {\color{OliveGreen} \blacktriangle } &  {\color{OliveGreen} \blacktriangle }
    \end{pmatrix}
    }$.
The first step is a transition from $q_{12}$ to $q_{11}
=
{
\setlength{\arraycolsep}{3pt}\begin{pmatrix}
        {\color{red} \bullet }&   
        \\[-0.2cm]
         {\color{OliveGreen} \blacktriangle } &  
    \end{pmatrix}
\begin{pmatrix}
        {\color{red} \bullet }&   
        \\[-0.2cm]
         {\color{OliveGreen} \blacktriangle } &  
    \end{pmatrix}
    }$, 
which has probability $(1-s_N)(1-r_N)^2$.
The second step is a transition from $q_{11}$ to $q_{0}$ (coalescence occurred in at least one locus).
Note that there are other  events that can also give a transition from $q_{11}$ to $q_{0}$, such as when neither chromosome is a recombinant and both of them came from the same parental chromosome. 
    } \label{Fig:TiTj}
\end{figure}
\FloatBarrier

\section{Theoretical results}\label{S:Tresults}

Before stating our main results for two loci, we first recall some related results for a single locus for comparison.

\subsection{Coalescence time for a pair of gene copies in a single locus} \label{R:one_loci}

Fix a locus (without loss of generality, locus $i$) and consider a sample of two gene copies at that locus. As expected, the recombination probability $r_N$ does not play a role in the coalescence time for a single locus. 

All statements in this subsection, including Lemmas \ref{L:ETi}-\ref{L:Distribution_Ti}, are essentially known and summarized for instance in  \cite[ Section 2.3]{etheridge2011some}.   
However, we recall the details here because we  need these statements to set the stage for our main results in Section \ref{R:two_loci}.

The ancestral process of the sample,  obtained by tracing the ancestral lineages of the two gene copies backward-in-time, can be represented by a Markov process with 3 states $\{\text{coal},\, \text{same}, \,\text{diff}\}$ representing respectively that the gene copies have coalesced, the gene copies are in the same individual but have not coalesced, and the two gene copies are in two difference individuals.
\begin{align*}
{
\setlength{\arraycolsep}{3pt}
    \text{coal}=\begin{pmatrix}
        {\color{red} \bullet }
          &
    \end{pmatrix},\ \text{same} = \begin{pmatrix}
        {\color{red} \bullet }&  
         {\color{red} \bullet }
    \end{pmatrix},\ \text{diff} = \begin{pmatrix}
        {\color{red} \bullet }&  
    \end{pmatrix} \begin{pmatrix}
          {\color{red} \bullet }&
    \end{pmatrix}
    }
\end{align*}
From the Wright-Fisher model, the one-step transition matrix $\mathbf{\Lambda}_N$  of this Markov process is 
\FloatBarrier
\begin{figure}[ht!]
\begin{center}
\begin{tabular} {c|ccc}
  & $\text{coal}$  & $\text{same}$ & $\text{diff}$\\ \hline
  $\text{coal}$  &  $1$ & $0$ & $0$ \\[6pt]
  $\text{same}$  & $\frac{s_N}{2}$ & $\frac{s_N}{2}$ & $1-s_N$\\[6pt]
  $\text{diff}$  &  $\frac{1}{2N}$  & $\frac{1}{2N}$ &  $1-\frac{1}{N}$
\end{tabular}
\caption{Transition matrix $\mathbf{\Lambda}_N$ for a pair of gene copies at a single locus, under the Wright-Fisher model with partial selfing. The coalescent state $``\text{coal}"$ is an absorbing state.}\label{M:single_locus}
\end{center}
\end{figure}
\FloatBarrier

This matrix appeared in \cite[Section 3.1]{Mohle1998a} and \cite[eqn.(6.13)]{wakeley2009coalescent}. 
In the following, for $q\in \{\text{coal}, \text{same}, \text{diff}\}$ we let  $\mathbb P_q$  be the conditional probability under which the initial state (sampling  configurations) is $q$.

By definition, $T_i=0$  under $\P_\text{coal}$.
Through a first step analysis, we obtain explicit formulas for the first two moments of $T_i$ under both  $\mathbb P_\text{same}$ and $\mathbb P_\text{diff}$.

\begin{lemma}\label{L:ETi}\rm
For each positive integer $N\ge 2$,
\begin{enumerate}
    \item $\mathbb E_\text{same}[T_i]=2(1-s_N)\,N+2$
    \item $\mathbb E_\text{diff}[T_i]=(2-s_N)\,N + 1$
    \item  $\mathbb E_\text{same}[T_i^2] = 4(1-s_N)(2-s_N)\,N^2+10(1-s_N)\,N+6 $
    \item $\mathbb E_\text{diff}[T_i^2] = 2(2-s_N)^2\,N^2 +(6-5s_N)\,N+3$
\end{enumerate}
Thus,
\begin{enumerate}
    \item $\textrm{Var}_\text{same}[T_i] =  4(1-s_N )N^2 +2(1-s_N )N +2$
    \item $\textrm{Var}_\text{diff}[T_i] = (2-s_N )^2\,N^2 +(2-3s_N )N +2$
\end{enumerate}
\end{lemma}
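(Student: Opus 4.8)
The plan is to use a standard first-step (one-generation) analysis on the three-state Markov chain with transition matrix $\mathbf{\Lambda}_N$ of Figure \ref{M:single_locus}, using that $\text{coal}$ is absorbing, that $T_i$ is the absorption time, and that all moments involved are finite because the chain has a uniform geometric tail (for $N\ge 2$ the per-step probability of leaving $\{\text{same},\text{diff}\}$ is bounded below), so the first-step equations below are valid.

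First I would set up the linear system for the expectations. Writing $a\coleq\mathbb E_\text{same}[T_i]$ and $b\coleq\mathbb E_\text{diff}[T_i]$ and conditioning on the state after one generation (the traced generation itself contributes the constant $1$) gives
\begin{align*}
a &= 1 + \tfrac{s_N}{2}\,a + (1-s_N)\,b,\\
b &= 1 + \tfrac{1}{2N}\,a + \bigl(1-\tfrac1N\bigr)\,b.
\end{align*}
The second equation yields $b = N + a/2$; substituting into the first and simplifying (the two $a/2$ contributions combine because $\tfrac{s_N}{2}+\tfrac{1-s_N}{2}=\tfrac12$) gives $a = 2(1-s_N)N+2$ and then $b=(2-s_N)N+1$, which are parts (1) and (2).

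For the second moments I would use the decomposition $T_i = 1 + T_i'$, where $T_i'$ is the residual coalescence time after one generation (equal to $0$ on coalescence), so that $T_i^2 = 1 + 2T_i' + (T_i')^2$ and $\mathbb E[T_i'] = \mathbb E[T_i]-1$. With $A\coleq\mathbb E_\text{same}[T_i^2]$, $B\coleq\mathbb E_\text{diff}[T_i^2]$, conditioning on the first step gives the linear system
\begin{align*}
A &= (2a-1) + \tfrac{s_N}{2}\,A + (1-s_N)\,B,\\
B &= (2b-1) + \tfrac{1}{2N}\,A + \bigl(1-\tfrac1N\bigr)\,B,
\end{align*}
with $a,b$ now known. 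Eliminating $B = N(2b-1)+A/2$ from the second equation and substituting into the first (again using $\tfrac{s_N}{2}+\tfrac{1-s_N}{2}=\tfrac12$) gives $A = (4a-2) + 2(1-s_N)N(2b-1)$ and the symmetric expression for $B$; inserting the formulas for $a$ and $b$ and expanding yields parts (3) and (4).

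Finally, the variance formulas follow from $\mathrm{Var} = \mathbb E[T_i^2] - (\mathbb E[T_i])^2$ by direct subtraction, where the leading $N^2$ coefficient of $\mathrm{Var}_\text{same}$ collapses from $4(1-s_N)(2-s_N)-4(1-s_N)^2$ to $4(1-s_N)$, and similarly for $\mathrm{Var}_\text{diff}$. There is no genuine obstacle here — the entire content is the bookkeeping of two $2\times2$ linear systems — so the only points requiring care are the convention that a coalescence during the first traced generation contributes $T_i=1$ (which is what produces the correct constant terms $+1,+2,\dots$) and checking finiteness of the moments before writing the recursions.
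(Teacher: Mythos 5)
Your proof is correct and follows essentially the same route as the paper's: a first-step analysis on the three-state chain with transition matrix $\mathbf{\Lambda}_N$, producing a linear system of four equations in the four unknowns $\mathbb E_\text{same}[T_i]$, $\mathbb E_\text{diff}[T_i]$, $\mathbb E_\text{same}[T_i^2]$, $\mathbb E_\text{diff}[T_i^2]$, which is then solved explicitly and combined with $\mathrm{Var}=\mathbb E[T_i^2]-(\mathbb E[T_i])^2$. Your added remarks on moment finiteness and on the convention that coalescence in the first traced generation gives $T_i=1$ are consistent with the paper's recursion $\mathbb E_q[T_i]=\sum_k p_{qk}\,\mathbb E_k[T_i+1]$, so there is nothing to add.
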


Next, in Lemma \ref{L:Distribution_Ti}  we consider the asymptotic value of $T_i$ as $N\to\infty$. Precisely, we show that if one unit of  time is $N$ generations, then $T_i$ (measured in the new time unit) converges in distribution as  $N\to\infty$. 
We consider two cases for the selfing probability $s_N$: namely, either   $s_N=s$ is a constant for all $N$, or $s_N$ is inversely proportional to  $N$ (in which case we suppose $s_N=\tilde{\sigma}/N$ for all $N$). 

\begin{lemma}\label{L:Distribution_Ti}\rm
Let $s\in [0,1)$ and $\tilde{\sigma}\in [0, \infty)$. Then the following hold for all $t\in (0, \infty)$.
\begin{enumerate}
    \item When the sampled gene copies are in the same individual in generation $g=0$,
\begin{equation}
      \lim_{N\rightarrow\infty}\mathbb P_{\rm same}(T_i>[Nt]) =
      \begin{cases}
         e^{-t/(2-s)} \frac{2(1-s)}{2-s}&\text{ when } s_N=s\label{R1:P_s_tail}\\
          e^{-t/2} &\text{ when }s_N=\tilde{\sigma}/N
    \end{cases}.
\end{equation}
\item When the sampled gene copies are in distinct individuals in generation $g=0$,
\begin{equation}
      \lim_{N\rightarrow\infty}\mathbb P_{\rm diff}(T_i>[Nt]) 
     =
      \begin{cases}
          e^{-t/(2-s)}&\text{ when } s_N=s\label{R1:P_d_tail}
         \\
          e^{-t/2} &\text{ when }s_N=\tilde{\sigma}/N
    \end{cases}.
\end{equation}
\end{enumerate}
\end{lemma}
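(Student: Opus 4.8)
The plan is to reduce everything to the spectral analysis of the $2\times 2$ sub‑stochastic matrix obtained by deleting the absorbing row and column of $\mathbf{\Lambda}_N$. Write $Q_N$ for the restriction of $\mathbf{\Lambda}_N$ to the transient states $\{\text{same},\text{diff}\}$,
\[
Q_N \;=\; \begin{pmatrix} s_N/2 & 1-s_N \\[2pt] 1/(2N) & 1-1/N \end{pmatrix},
\]
and put $\mathbf{1}=(1,1)^{\top}$. Since $\{T_i>n\}$ is exactly the event that the single‑locus ancestral chain has not reached $\text{coal}$ by generation $n$, we have $\mathbb{P}_q(T_i>n)=(Q_N^{\,n}\mathbf{1})_q$ for $q\in\{\text{same},\text{diff}\}$, so the whole problem becomes the behaviour of $Q_N^{[Nt]}\mathbf{1}$ as $N\to\infty$.

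First I would diagonalise $Q_N$. Its characteristic polynomial is $\lambda^2-\bigl(\tfrac{s_N}{2}+1-\tfrac1N\bigr)\lambda+\bigl(\tfrac{s_N}{2}-\tfrac1{2N}\bigr)$, whose discriminant converges to $(1-s/2)^2$ when $s_N\equiv s$, and to $1$ when $s_N=\tilde{\sigma}/N$; hence for all large $N$ there are two distinct real eigenvalues $\lambda_1(N)>\lambda_2(N)$ and $Q_N$ is diagonalisable. A one‑term expansion of the larger root gives $\lambda_1(N)=1-\tfrac{c}{N}+o(\tfrac1N)$ with $c=\tfrac1{2-s}$ when $s_N\equiv s$ and $c=\tfrac12$ when $s_N=\tilde{\sigma}/N$; consequently $\lambda_1(N)^{[Nt]}=\exp\bigl([Nt]\log\lambda_1(N)\bigr)\to e^{-ct}$, the exponential factor appearing in both parts. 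For the smaller root, $\lambda_1(N)\lambda_2(N)=\det Q_N\to s/2$ (resp. $0$) while $\lambda_1(N)\to 1$, so $\lambda_2(N)\to s/2<1$ (resp. $\to 0$); in either case $\limsup_N|\lambda_2(N)|<1$ and $\lambda_2(N)^{[Nt]}\to 0$.

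Next I would pin down the prefactor. Decomposing $\mathbf{1}=\alpha_1(N)\,\mathbf{v}_1(N)+\alpha_2(N)\,\mathbf{v}_2(N)$ over the right eigenvectors of $Q_N$, normalised so that the $\text{diff}$‑coordinate of $\mathbf{v}_1(N)$ is $1$, we get
\[
\mathbb{P}_q(T_i>n)\;=\;\alpha_1(N)\,\lambda_1(N)^{\,n}\,(\mathbf{v}_1(N))_q\;+\;\alpha_2(N)\,\lambda_2(N)^{\,n}\,(\mathbf{v}_2(N))_q ,
\]
so by the previous step $\lim_N\mathbb{P}_q(T_i>[Nt])=\bigl(\lim_N\alpha_1(N)(\mathbf{v}_1(N))_q\bigr)e^{-ct}$. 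The second row of $(Q_N-\lambda_1(N)I)\mathbf{v}_1(N)=0$, together with the expansion of $\lambda_1(N)$, gives $(\mathbf{v}_1(N))_\text{same}=2(1-c)+o(1)$, and $2(1-c)$ equals $\tfrac{2(1-s)}{2-s}$ (resp. $1$); likewise $(\mathbf{v}_2(N))_\text{diff}\to 0$, and since the limiting eigenvector matrix $\begin{psmallmatrix} 2(1-c) & 1 \\ 1 & 0\end{psmallmatrix}$ is invertible, $\alpha_1(N)\to 1$ and $\alpha_2(N)\to 2c-1$. Hence $\lim_N\alpha_1(N)(\mathbf{v}_1(N))_\text{diff}=1$ and $\lim_N\alpha_1(N)(\mathbf{v}_1(N))_\text{same}=2(1-c)$, which are precisely the constants in \eqref{R1:P_d_tail} and \eqref{R1:P_s_tail}.

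The one genuinely delicate point is the prefactor $\tfrac{2(1-s)}{2-s}<1$ for the $\text{same}$ initial state in the constant‑selfing regime: it reflects a short $O(1)$‑generation initial phase in which the two lineages, sharing an individual, either coalesce immediately (with limiting probability $\tfrac{s}{2-s}$) or disperse into distinct individuals, after which the coalescence time is of order $(2-s)N$. The spectral argument produces this automatically, but I would also record the probabilistic reading for intuition, and note the alternative route of writing the exact one‑step recursion for $\mathbb{P}_q(T_i>n)$, passing to the limit to obtain a linear system of ODEs and reading off its solution, with the moment bounds of Lemma~\ref{L:ETi} supplying the tightness. The eigenvalue expansions and the estimate $[Nt]\log\lambda_1(N)\to-ct$ are routine, so the genuine content is the prefactor bookkeeping just described.
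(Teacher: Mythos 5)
Your argument is correct, and the computations check out: the trace and determinant of $Q_N$ are as you state, the expansion $\lambda_1(N)=1-\tfrac{1}{N(2-s)}+O(N^{-2})$ (resp.\ $1-\tfrac{1}{2N}+O(N^{-2})$) is right, and the eigenvector bookkeeping delivers exactly the prefactors $\tfrac{2(1-s)}{2-s}$ and $1$. However, your route is genuinely different from the paper's. The paper does not diagonalise anything: it splits the full $3\times 3$ matrix as $\mathbf{\Lambda}_N=A+\tfrac1N B$ and invokes M\"ohle's separation-of-time-scales lemma, $\mathbf{\Lambda}_N^{[Nt]}\to P e^{tG}$ with $P=\lim_k A^k$ and $G=PBP$, then reads off the limit as $(0,1,0)Pe^{tG}(0,1,1)^{\top}$. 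In that framework your ``delicate prefactor'' appears transparently as the instantaneous projection $P$ acting first: starting from $\text{same}$, the fast dynamics either absorb (probability $\tfrac{s}{2-s}$) or dump the chain into $\text{diff}$ (probability $\tfrac{2(1-s)}{2-s}$), after which the slow generator $G$ produces $e^{-t/(2-s)}$. What M\"ohle's lemma buys is exactly the step you flag as delicate, plus immediate generalisability to larger state spaces (the paper reuses the same philosophy for the $13$-state two-locus chain); what your direct spectral computation buys is self-containedness — no appeal to an external convergence theorem — at the cost of being tied to the explicit $2\times 2$ structure. Both are complete proofs; if you write yours up, the only points to make explicit are that the discriminant is eventually positive (so the eigenvalues are real, distinct, and $Q_N$ is diagonalisable for all large $N$) and that $\alpha_2(N)$ and $(\mathbf{v}_2(N))_q$ stay bounded so the $\lambda_2^{[Nt]}$ term really vanishes — both of which you have in substance.
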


It follows from \eqref{R1:P_d_tail} that, under $\P_{\rm diff}$, $T_i/N$ converges to an exponential random variable with mean $2-s$ (resp. $2$) if $s_N=s$ (resp. $s_N=\tilde{\sigma}/N$) for all $N$.
Similarly,
from \eqref{R1:P_s_tail}, under $\P_{\rm same}$, $T_i/N$ converges to
a random variable $T_{\ast}$ as $N\to\infty$. This agrees with the standard coalescent model when $s=0$ and $N$ is itself the effective population size $N_e$.

When $s_N=O(1/N)$, this random variable $T_{\ast}$ is
an exponential random variable with mean  $2$, which is the standard coalescent. 
When $s_N=s\in(0,1)$ for all $N$, however, then 
$$\P(T_*> 0)=\frac{2(1-s)}{2-s}<1 \quad\text{and hence}\quad  \P(T_*= 0)=\frac{s}{2-s}>0.$$ 
The positive probability $\P(T_*= 0)$ of instantaneous coalesce corresponds to the well-known increase in the frequency of homozygotes in partial selfers \citep{nordborg1997coalescent}.
This probability $\frac{s}{2-s}=\sum_{i=1}^\infty(\frac{s}{2})^i$  is exactly the probability the sampled gene copies coalesce as a result of selfing rather than ending up in different individuals by outcrossing. Equivalently,
 $1-\frac{s}{2-s} = \frac{2(1-s)}{2-s}$ is
the probability that the loci end up in different individuals before they coalesce. 

The conditional distribution of $T_*$ given the event  $\{T_*>0\}$ is exponential with mean $2-s$ (same as the distribution of gene copies sampled from distinct individuals, as anticipated).
In the case $s_N=s\in(0,1)$ for all $N$, as discussed in \cite{nordborg2000linkage}, coalescence with partial selfing looks like the standard coalescent but with a rate of coalescent that is faster.

Following \cite{pollak1987theory}, we say that the \textit{effective population number} (of diploid individuals) is $\frac{2-s}{2}=1-\frac{s}{2}$ (see also \cite{EWENS1982373}). When partial selfing and recombination are both present, there maynot be a single effective population size, as pointed out in \citep{nordborg2000linkage}.







\subsection{Correlation of coalescence times for two loci} \label{R:two_loci}

As mentioned in Section \ref{S:ancrestralProcess},
the ancestry of two pairs of gene copies at two  loci can be represented by a discrete-time Markov Chain with  13 states $\{q_i\}_{i=0}^{12}$, listed  in \ref{A:States},  that represent
all possible positions of the two pairs of two gene copies. The coalescent state is considered to be the state where at least one pair of gene copies has coalesced, and is denoted by $q_0$.
The Markov transition matrix is also provided in \ref{A:States}.


Because of their biological relevance, here we focus on
the following three initial  sampling  configurations:
 {
\setlength{\arraycolsep}{3pt}
\begin{align}\label{sampling}
    q_{5}=
    \begin{pmatrix}
        {\color{red} \bullet }&  {\color{red} \bullet }
        \\[-0.2cm]
          &  
    \end{pmatrix}
    \begin{pmatrix}
         &   
        \\[-0.2cm]
         {\color{OliveGreen} \blacktriangle } & {\small \color{OliveGreen} \blacktriangle }
    \end{pmatrix},
\qquad 
q_{11}=\begin{pmatrix}
        {\color{red} \bullet }&   
        \\[-0.2cm]
         {\color{OliveGreen} \blacktriangle } &  
    \end{pmatrix}
\begin{pmatrix}
        {\color{red} \bullet }&   
        \\[-0.2cm]
         {\color{OliveGreen} \blacktriangle } &  
    \end{pmatrix}
\quad \text{or}\quad
    q_{12}=
 \begin{pmatrix}
        {\color{red} \bullet }&  {\color{red} \bullet }
        \\[-0.2cm]
         {\color{OliveGreen} \blacktriangle } & {\color{OliveGreen} \blacktriangle }
\end{pmatrix}
\end{align}
}
Namely, $q_{12}$ corresponds to picking one individual and sampling two pairs of gene copies present on both   chromosomes, $q_{11}$ corresponds to picking two individuals and then picking one chromosomes from each, $q_{5}$ corresponds to picking two individuals and then looking at one pair of gene copies, at the same locus, from each individual. 

Our main result is the asymptotic behaviors of the covariance $\text{Cov}_{q}[T_i, T_j]$ and the  correlation coefficient $\text{Corr}_{q}[T_i, T_j]$ as $N\to\infty$, where $q\in\{q_5, q_{11}, q_{12}\}$.
Corresponding results for other  initial sampling  configurations can be found in \ref{A:expectation} and \ref{A:smallcov}.


Let $R=(1-r_N)^2+r_N^2=2r_N^2-2r_N+1$, which is the probability that both chromosomes undergo recombination or both chromosomes do not undergo recombination. 
For any sequences $\{A_N\}$ and $\{B_N\}$, we write $A_N\asymp B_N$ if $\lim_{N\to\infty}\frac{A_N}{B_N}=1$ and say that $A_N$ and $B_N$ have the same order in $N$. 
\begin{thm}\label{T:q12}
Let $s\in[0,1)$ and $r\in(0, 1]$ and $\tilde{\sigma},\tilde{\rho}\in(0,\infty)$. The following asymptotics for the covariance $\text{Cov}_{q}[T_i, T_j]$ hold as $N\to\infty$.
\vspace{-1em}
\begin{align}
\intertext{\item[(i)] When $s_N=\tilde{\sigma}/N$ and  $r_N=\tilde{\rho}/N$ for all $N\in\mathbb{N}$, }
\text{Cov}_{q}[T_i, T_j] \asymp 
&\begin{dcases}
    \frac{8}{8\tilde{\rho}^2+26\tilde{\rho}+9}\cdot N^2, \quad\text{if } q=q_{5}\\
    \frac{8\tilde{\rho}+36}{8\tilde{\rho}^2+26\tilde{\rho}+9}\cdot N^2, \quad\text{if } q=q_{11}\\
    \frac{8\tilde{\rho}+36}{8\tilde{\rho}^2+26\tilde{\rho}+9}\cdot N^2, \quad\text{if } q=q_{12}
\end{dcases}\label{cov1}\\
\intertext{\item[(ii)] When $s_N=s$ and  $r_N=\tilde{\rho}/N$ for all $N\in\mathbb{N}$,}
\text{Cov}_{q}[T_i, T_j] \asymp 
&\begin{dcases} \frac{8(1-s)^2}{8\tilde{\rho}^2(1-s)^2+26\tilde{\rho}(1-s)+9}\cdot N^2, &\text{if } q=q_{5}\\
\frac{(2-s)^2(2\tilde{\rho}(1-s)
+9)}{8\tilde{\rho}^2(1-s)^2+26\tilde{\rho}(1-s)+9}\cdot N^2, &\text{if } q=q_{11}
\\
\frac{4(1-s)(4\tilde{\rho}^2s(1-s)^2+12\tilde{\rho}s(1-s)+2\tilde{\rho}(1-s)+9)}{8\tilde{\rho}^2(1-s)^2+26\tilde{\rho}(1-s)+9}\cdot N^2, &\text{if } q=q_{12}
\end{dcases}\label{cov2}\
\
\intertext{\item[(iii)] When $s_N= \tilde{\sigma}/N$ and  $r_N=r$ for all $N\in\mathbb{N}$,}
\text{Cov}_{q}[T_i, T_j] \asymp
&\begin{dcases}
   O(1), &\text{if } q=q_{5}\\
   \frac{2R}{r(2-r)}\cdot N, &\text{if } q=q_{11}\\
   \frac{2R\cdot (2r\tilde{\sigma}-2r-r^2\tilde{\sigma}+r^2+1)}{r(2-r)}\cdot N, &\text{if } q=q_{12} \\
\end{dcases}\label{cov3}\\
\intertext{\item[(iv)] When $s_N=s$ and  $r_N=r$ for all $N\in \mathbb{N}$,}
\text{Cov}_{q}[T_i, T_j]\asymp &\begin{dcases}
\frac{2s^3(1-s)(2R-2s R+s)}{(4-s^2)(2-s R)}\cdot N, &\text{if } q=q_{5}\\
\frac{\left(2-s\right)\;P(r,s)}{4r\left(1-s\right)\left(s R-2\right)\left(rs+r-
s-2\right)\left(2rs-s+2\right)}\cdot N, 
\quad &\text{if }q=q_{11}\\
\left(\frac{2(1-s)(2-s)^2}{2-s R}-4(1-s)^2\right)\cdot N^2, \quad & \text{if }q=q_{12}
\end{dcases}\label{cov4}
\end{align}
where $R=(1-r)^2+r^2$ 
and $P(r,s)$ \label{E:complexpolynomial} is the polynomial
\begin{align*}
P(r,s):=&\,(R-1)(2r-1)^3s^5-(R+2r^2)(2R-1)s^4+(32r^4-40Rr+8r+4)s^3\\
&\,+8r\left(2r-1\right)s^2-\left(40R-24\right)s+16R.
\end{align*}
\end{thm}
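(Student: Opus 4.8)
The plan is to derive the asymptotics of $\text{Cov}_q[T_i,T_j]$ from a first-step analysis of the $13$-state Markov chain $(q_i)_{i=0}^{12}$ whose transition matrix is displayed in \ref{A:Matrix}. The key identity is that, starting from any state $q$, conditioning on the first step gives a linear system for the quantities $\mathbb E_q[T_iT_j]$, and similarly (as already used for Lemma \ref{L:ETi}) for $\mathbb E_q[T_i]$, $\mathbb E_q[T_j]$. Concretely, for each non-absorbing state $q$ one writes $\mathbb E_q[T_iT_j] = \sum_{q'} P(q,q')\,\mathbb E_{q'}[(T_i+\mathbf 1_{\{i \text{ not coalesced}\}})(T_j+\mathbf 1_{\{j\text{ not coalesced}\}})]$, which expands into $\mathbb E_q[T_iT_j]$-terms plus lower-order cross terms $\mathbb E_{q'}[T_i]$, $\mathbb E_{q'}[T_j]$ and constants; since the single-locus moments are already known in closed form (Lemma \ref{L:ETi}), the system for the mixed second moments is linear with an explicitly computable inhomogeneous term. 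Solving it and subtracting $\mathbb E_q[T_i]\mathbb E_q[T_j]$ yields exact rational expressions in $N, s_N, r_N$ for $\text{Cov}_q[T_i,T_j]$.

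The second step is the asymptotic reduction. For each of the four scenarios $(s_N,r_N)\in\{(\tilde\sigma/N,\tilde\rho/N),(s,\tilde\rho/N),(\tilde\sigma/N,r),(s,r)\}$ one substitutes into the exact formulas and extracts the leading term as $N\to\infty$. The natural route is to change variables to the coalescent time scale: set $\tau = g/N$ and observe that, on this scale, the non-coalescent part of the chain has a limiting generator. In Scenarios (i) and (ii) recombination is $O(1/N)$, so the two loci stay correlated on the coalescent time scale and the covariance is $\Theta(N^2)$; in Scenarios (iii) and (iv) with $r_N = r$ of order one, recombination acts instantaneously on the coalescent scale, so for most initial states the two genealogies decouple rapidly and $\text{Cov}_q[T_i,T_j]$ drops to $\Theta(N)$ — except for $q_{12}$ in Scenario (iv), where the $O(1)$ selfing probability keeps the two loci in a single individual long enough (with probability $\asymp s/(2-sR)$ worth of "shared history") to retain an $\Theta(N^2)$ covariance; this is exactly the phenomenon behind the stated $q_{12}$ formula and the $s/2$ limit for Tajima's estimator advertised in the abstract. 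One should verify consistency across scenarios (e.g.\ letting $\tilde\rho\to 0$ in (i) should recover the fully-linked single-locus variance, and the $q_{12}$, $s_N=0$, $r_N=1/2$ specialization should reproduce the $1/(12N)$ bound of \cite{king2018non}).

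I would organize the computation by first recording the relevant block of the transition matrix restricted to the states reachable from $q_5, q_{11}, q_{12}$, then writing the $3\times 3$ (or slightly larger, once intermediate states are included) linear systems, solving with a symbolic computation, and finally performing the four limits. The polynomial $P(r,s)$ appearing in the $q_{11}$ case of Scenario (iv) signals that the exact pre-limit expression is genuinely messy; the honest approach is to carry the exact rational function and only simplify at the very end.

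The main obstacle I expect is not conceptual but the sheer algebraic bulk: the mixed-second-moment system couples many of the twelve transient states (a first step from $q_{12}$ can land in $q_{11}$, $q_5$, various "one recombinant" states, and $q_0$), so the linear system is larger than the single-locus one and its solution is a ratio of high-degree polynomials in $r_N, s_N, N$. Keeping these computations correct — and then correctly identifying which terms survive in each of the four scaling limits, particularly distinguishing the $\Theta(N^2)$ from the $\Theta(N)$ behavior and pinning down the exact leading constant — is where care is needed; I would cross-check the exact formulas against the Monte Carlo simulations of Section \ref{S:simulations} and against the known single-locus limits as internal consistency checks.
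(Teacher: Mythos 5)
Your proposal matches the paper's proof essentially step for step: the paper performs exactly this first-step analysis, writing $\vec E_N=\widetilde{\bf \Pi}_N\vec E_N+\vec b_N$ with $\vec b_N$ computed from Lemma \ref{L:ETi}, inverting ${\bf I}-\widetilde{\bf \Pi}_N$ symbolically (in MATLAB), subtracting $\E_q[T_i]\E_q[T_j]$, and then reading off the leading order in each of the four scenarios via the tables in \ref{A:expectation} and \ref{A:smallcov}. The only cosmetic differences are that the paper solves the full $12\times 12$ transient system rather than a reachable sub-block, and it does not formalize the coalescent-time-scale/generator heuristic you mention for the asymptotic step.
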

\medskip


\medskip

Theorem \ref{T:q12_extreme} below covers the cases of total selfing and no recombination, which are not covered in Theorem \ref{T:q12}. Note that we have exact equality for some of the terms when $N\geq 2$. This is denoted by $=$ rather than $\asymp$.
\begin{thm}\label{T:q12_extreme}
The followings hold for all integers $N\geq 2$.
\begin{align}
\intertext{(Total Selfing) When $s_N=1$}
&\begin{dcases}
    \text{Cov}_{q_{5}}[T_i, T_j]=\frac{2(2r_N-1)^2}{(3N+1)(1+2r_N-2r_N^2)}\quad &\text{if }q=q_{5}\\
    \text{Cov}_{q_{11}}[T_i, T_j]\asymp N^2\quad &\text{if }q=q_{11}\\
    \text{Cov}_{q_{12}}[T_i, T_j]=\frac{6}{1+2r_N-2r_N^2}-4 \quad &\text{if }q=q_{12}
\end{dcases}
\intertext{(No recombination) When $r_N=0$:}
&\begin{dcases}
\text{Cov}_{q_5}[T_i, T_j]\asymp 8(1-s_N)^2/9\cdot N^2 \quad &\text{if }q=q_{5} \\
\text{Cov}_{q_{11}}[T_i, T_j]=(4-4s_N +s_N^2 )N^2 +(2-3s_N )N +2 \quad &\text{if }q=q_{11}\\
\text{Cov}_{q_{12}}[T_i, T_j]= (4-4s_N )N^2 +(2-2s_N )N +2 \quad &\text{if }q=q_{12}
\end{dcases}
\end{align}
\end{thm}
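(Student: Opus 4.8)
\medskip
\noindent\textbf{Proof proposal.}
For every one of the six assertions I would start from $\text{Cov}_q[T_i,T_j]=\E_q[T_iT_j]-\E_q[T_i]\,\E_q[T_j]$, so that the two marginal factors are supplied by Lemma~\ref{L:ETi} (both loci begin in the ``same'' configuration when $q\in\{q_5,q_{12}\}$ and in the ``diff'' configuration when $q=q_{11}$), and the only thing left is $\E_q[T_iT_j]$. I would obtain this by a first--step analysis of the two--locus ancestral chain of \ref{A:States}, after refining the coalescent state $q_0$ into the few states ``locus $i$ has coalesced, locus $j$ is in the same/diff configuration'' (and symmetrically), so that once one locus coalesces the other is run to completion via the single--locus moments of Lemma~\ref{L:ETi}. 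The reason this is tractable at the two extreme parameter values is that the chain degenerates: if $r_N=0$ the loci are perfectly linked and sampled material moves as whole chromosomes, while if $s_N=1$ each individual has a single parent, so the individual--level ancestry of the sample is a few deterministic single--parent lineages and recombination only permutes loci within an individual.

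\medskip\noindent\emph{No recombination.}
With $r_N=0$ every chromosome is inherited intact, so a locus--$i$ lineage and a locus--$j$ lineage on a common chromosome never separate. In $q_{12}$ the two chromosomes of the focal individual carry one copy of each locus, and in $q_{11}$ the single sampled chromosome of each of the two individuals does the same; in both cases the locus--$i$ pair coalesces exactly when the two carrying chromosome--lineages coalesce, i.e.\ exactly when the locus--$j$ pair coalesces. Hence $T_i=T_j$ almost surely and $\text{Cov}_q[T_i,T_j]=\textrm{Var}_q[T_i]$, which Lemma~\ref{L:ETi} gives as $4(1-s_N)N^2+2(1-s_N)N+2$ for $q_{12}$ (``same'') and $(2-s_N)^2N^2+(2-3s_N)N+2$ for $q_{11}$ (``diff''): precisely the two claimed exact identities. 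For $q_5$ the locus--$i$ pair lives on the two chromosomes of one individual and the locus--$j$ pair on those of a different individual, so $T_i\neq T_j$; here I would run the first--step analysis on the reduced chain of four chromosome--lineages (two ``red'', two ``green''), whose states are the ways of distributing them among individuals, solve the finite linear system for $\E_{q_5}[T_iT_j]$, and read off the $N^2$ coefficient $\tfrac89(1-s_N)^2$. The limit is nonzero because, once all four lineages lie in distinct individuals, they behave like a four--lineage Kingman coalescent in which the red--red and green--green coalescence times are correlated through the shared genealogy (the first merge may be red--green); this is consistent with the $\tilde\rho\to0$ limit of Theorem~\ref{T:q12}(i)--(ii).

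\medskip\noindent\emph{Total selfing.}
Now each individual has one parent, so an individual--level lineage is deterministic given the pedigree, and the locus--$i$ and locus--$j$ lineages stemming from a common sampled chromosome follow the same individual--lineage. In $q_{11}$ this forces $T_i=\tau+G_i$ and $T_j=\tau+G_j$, where $\tau$ is the generation at which the individual--lineages of the two sampled individuals coalesce and $G_i,G_j\ge0$ are the (a.s.\ finite, moment--finite) residual times until the two locus--$i$, resp.\ locus--$j$, copies land on a common parental chromosome; since $\tau\sim\mathrm{Geom}(1/N)$ one has $\E[\tau^2]=2N^2-N$ while $\E[\tau(G_i+G_j)]=O(N)$ and $\E[G_iG_j]=O(1)$, so $\E_{q_{11}}[T_iT_j]\asymp 2N^2$; subtracting $\E_{q_{11}}[T_i]^2=(N+1)^2$ from Lemma~\ref{L:ETi} gives $\text{Cov}_{q_{11}}[T_i,T_j]\asymp N^2$. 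For $q_{12}$ the first step sends the chain, with probabilities $R/2,\ (1-R)/2,\ (1-R)/2,\ R/2$, to ``both coalesced'', ``only locus $i$ coalesced'', ``only locus $j$ coalesced'', or back to $q_{12}$; evaluating the middle two cases with $\E_{\mathrm{same}}[T_i]=2$ yields the single equation $\E_{q_{12}}[T_iT_j]\,(1-R/2)=3$, hence $\E_{q_{12}}[T_iT_j]=6/(2-R)$ and $\text{Cov}_{q_{12}}[T_i,T_j]=6/(2-R)-4=6/(1+2r_N-2r_N^2)-4$. The state $q_5$ is closed by the same device with a slightly larger but still finite state space: before the two sampled individuals have met, the chain returns to $q_5$ with probability $(N-1)/(4N)$ per step (so $q_5$ is visited $4N/(3N+1)$ times in expectation, the source of the factor $3N+1$), and when they do meet one finds, after a uniform assignment of the four chromosome--lineages to the two chromosomes of the common parent, the only configurations in which recombination re--enters --- a locus--$i$ and a locus--$j$ copy sharing a chromosome, i.e.\ exactly the state $q_{12}$ --- whose $\E[T_iT_j]$ we have just computed; solving the resulting $N$--dependent linear system then gives the exact value $2(2r_N-1)^2/\bigl[(3N+1)(1+2r_N-2r_N^2)\bigr]$.

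\medskip\noindent\emph{Main obstacle.}
The linear algebra is routine; the genuinely delicate point is the $q_5$ analysis in both regimes. There the naive ``one locus coalesced, the other continues'' bookkeeping is insufficient, because a collision --- two previously separate individuals acquiring a common parental individual under total selfing, or an $O(1/N)$ outcrossing collision when $r_N=0$ --- creates new configuration types (notably chromosomes carrying one sampled copy at each locus) that must be added to the state space. Keeping this enlarged but still finite state space complete, and correctly invoking the single--locus moments of Lemma~\ref{L:ETi} at every instant where a locus has already coalesced, is where the care lies.
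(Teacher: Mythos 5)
Your proposal is correct and follows essentially the same route as the paper's proof: reduce to $\text{Cov}_q=\E_q[T_iT_j]-\E_q[T_i]\E_q[T_j]$ with the marginals from Lemma~\ref{L:ETi}, observe $T_i=T_j$ (hence $\text{Cov}=\textrm{Var}$) for $q_{11},q_{12}$ when $r_N=0$, and close the degenerate chains $q_{12}\to\{q_{12},q_0\}$ and $q_5\to\{q_5,q_{12},q_0\}$ by first-step analysis under total selfing, yielding the identical equations $\E_{q_{12}}[T_iT_j](1-R/2)=3$ and the $\tfrac{3N+1}{4N}$ factor for $q_5$. The only cosmetic differences are your explicit $T=\tau+G$ decomposition for $q_{11}$ (the paper just notes $T_i-T_j=O(1)$ so $\E_{q_{11}}[T_iT_j]\asymp\E_{q_{11}}[T_i^2]$) and your refinement of $q_0$ into sub-states, which the paper's identity $\E_q[T_iT_j]=\sum_k(\widetilde{\bf\Pi}_N)_{qk}\E_k[T_iT_j]+\E_q[T_i]+\E_q[T_j]-1$ handles automatically.
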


The proofs of Theorem \ref{T:q12} and Corollary \ref{C:q12q11} will be provided in \ref{A:Proofs}. Note that the proof also gives similar results for different  initial sampling  configurations i.e. for all states $q\in\mathcal S$. 

\medskip

\section{Remarks about our analytical formulas}\label{S:interpretation}

We now offer some explanation of the analytical results in section \ref{R:two_loci} to provide some feeling for those complicated formulas. 
Since it is also of interest to look at the correlation coefficient which is normalized and therefore unitless, we first rephrase Theorems \ref{T:q12} and \ref{T:q12_extreme} as results about the correlation coefficient.
\begin{corollary}\label{C:q12q11}
Let $s\in[0,1)$ and $r\in(0, 1]$ and $\tilde{\sigma},\tilde{\rho}\in(0,\infty)$. The following asymptotics for the correlation coefficient $\text{Corr}_{q}[T_i, T_j]$ hold as $N\to\infty$.
\begin{align}
\intertext{\item[(i)] When $s_N=\tilde{\sigma}/N$ and  $r_N=\tilde{\rho}/N$ for all $N\in\mathbb{N}$,} 
\text{Corr}_{q}[T_i, T_j] \asymp
&\begin{dcases}
    \frac{2}{8\tilde{\rho}^2+26\tilde{\rho}+9}, \quad &\text{if } q=q_{5}\\
    \frac{2\tilde{\rho}+9}{8\tilde{\rho}^2+26\tilde{\rho}+9}, \quad &\text{if } q=q_{11}\\
    \frac{2\tilde{\rho}+9}{8\tilde{\rho}^2+26\tilde{\rho}+9}, \quad &\text{if } q=q_{12}
\end{dcases} \label{E:coeff1}\\
\intertext{\item[(ii)] When $s_N=s$ and  $r_N=\tilde{\rho}/N$ for all $N\in\mathbb{N}$,}
\text{Corr}_{q}[T_i, T_j] \asymp 
&\begin{dcases}
    \frac{2}{8\tilde{\rho}^2(1-s)^2+26\tilde{\rho}(1-s)+9}, &\text{if } q=q_{5}\\
    \frac{2\tilde{\rho}(1-s)+9}{8\tilde{\rho}^2(1-s)^2+26\tilde{\rho}(1-s)+9}&\text{if } q=q_{11},  \\
    \frac{4\tilde{\rho}^2s(1-s)^2+12\tilde{\rho}s(1-s)+2\tilde{\rho}(1-s)+9}{8\tilde{\rho}^2(1-s)^2+26\tilde{\rho}(1-s)+9}&\text{if } q=q_{12}, 
\end{dcases} \label{E:coeff2}\\
\intertext{\item[(iii)] When $s_N= \tilde{\sigma}/N$ and  $r_N=r$ for all $N\in\mathbb{N}$,}   
\text{Corr}_{q}[T_i, T_j] \asymp 
&\begin{dcases}
    O(N^{-2}), \quad &\text{if } q=q_{5}\\
    \frac{R}{2r(2-r)}\cdot \frac{1}{N}, &\text{if } q=q_{11}\\
    \frac{R\cdot (2r\tilde{\sigma}-2r-r^2\tilde{\sigma}+r^2+1)}{2r(2-r)}\cdot \frac{1}{N}, &\text{if } q=q_{12}
\end{dcases} \label{E:coeff3}
\intertext{
\item[(iv)] When $s_N=s$ and  $r_N=r$ for all $N\in \mathbb{N}$, } 
\text{Corr}_{q}[T_i, T_j] \asymp &\begin{dcases}
\frac{s^3(2R-2s R+s)}{2(4-s^2)(2-s R)}\cdot \frac{1}{N}, &\text{if } q=q_{5}\\
\frac{P(r, s)}{4r\left(2-s\right)\left(1-s\right)\left(s R-2\right)\left(rs+r-
s-2\right)\left(2rs-s+2\right)} \cdot \frac{1}{N}\;\; &\text{if }q=q_{11}\\
\frac{(2-s)^2}{2(2-s R)}+s-1, \quad &\text{if }q=q_{12}
\end{dcases}
\label{E:coeff4}
\end{align}
where $R=(1-r)^2+r^2$ and $P(r, s)$ is the polynomial defined in Theorem \ref{T:q12}. 
\end{corollary}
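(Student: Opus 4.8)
## Proof Proposal for Corollary \ref{C:q12q11}

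The plan is to derive the correlation asymptotics directly from the covariance asymptotics in Theorem \ref{T:q12} together with the single-locus variance formulas in Lemma \ref{L:ETi}. The key observation is that the correlation coefficient factorizes as
\begin{equation*}
\text{Corr}_q[T_i, T_j] = \frac{\text{Cov}_q[T_i, T_j]}{\sqrt{\text{Var}_q[T_i]\,\text{Var}_q[T_j]}},
\end{equation*}
and that the marginal distribution of $T_i$ (resp. $T_j$) under $\mathbb P_q$ depends only on whether the two gene copies at locus $i$ (resp. $j$) start in the same individual or in distinct individuals. So the first step is to read off, for each of the three sampling configurations $q_5, q_{11}, q_{12}$, the relevant single-locus starting states: for $q_{12}$ both loci start in state "same"; for $q_{11}$ and $q_5$ both loci start in state "diff" (in $q_{11}$ the two gene copies at locus $i$ sit in two different individuals, and likewise at locus $j$; in $q_5$ the two copies at each locus are split across the two sampled individuals). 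Hence $\text{Var}_q[T_i] = \text{Var}_q[T_j]$ in every case, and this common variance is given by either $\text{Var}_{\rm same}[T_i]$ or $\text{Var}_{\rm diff}[T_i]$ from Lemma \ref{L:ETi}.

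The second step is to extract the leading-order asymptotics of these variances in each of the four scaling scenarios. From Lemma \ref{L:ETi}, $\text{Var}_{\rm same}[T_i] = 4(1-s_N)N^2 + O(N)$ and $\text{Var}_{\rm diff}[T_i] = (2-s_N)^2 N^2 + O(N)$. In Scenarios (i) and (iii), $s_N = \tilde\sigma/N \to 0$, so both variances are $\asymp 4N^2$. In Scenarios (ii) and (iv), $s_N = s$, so $\text{Var}_{\rm same}[T_i] \asymp 4(1-s)N^2$ and $\text{Var}_{\rm diff}[T_i] \asymp (2-s)^2 N^2$. Therefore $\sqrt{\text{Var}_q[T_i]\text{Var}_q[T_j]}$ is, to leading order, $4(1-s)N^2$ when $q = q_{12}$ and $(2-s)^2 N^2$ when $q \in \{q_5, q_{11}\}$ (with $s$ replaced by $0$ in Scenarios (i) and (iii)).

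The third step is purely mechanical: divide each covariance asymptotic from Theorem \ref{T:q12} by the corresponding normalizing factor and simplify. For instance, in Scenario (ii) with $q = q_{12}$, dividing $4(1-s)(4\tilde\rho^2 s(1-s)^2 + 12\tilde\rho s(1-s) + 2\tilde\rho(1-s) + 9)/(8\tilde\rho^2(1-s)^2 + 26\tilde\rho(1-s)+9)\cdot N^2$ by $4(1-s)N^2$ yields exactly the stated $(4\tilde\rho^2 s(1-s)^2 + 12\tilde\rho s(1-s) + 2\tilde\rho(1-s)+9)/(8\tilde\rho^2(1-s)^2 + 26\tilde\rho(1-s)+9)$; similar one-line reductions handle the other eleven cases, including the degenerate $O(1)$ and $O(N)$ covariances which become $O(N^{-2})$ and $O(N^{-1})$ correlations respectively. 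One must also confirm the $q_{12}$, Scenario (iv) entry: dividing $\left(\frac{2(1-s)(2-s)^2}{2-sR} - 4(1-s)^2\right)N^2$ by $4(1-s)N^2$ gives $\frac{(2-s)^2}{2(2-sR)} - (1-s) = \frac{(2-s)^2}{2(2-sR)} + s - 1$, matching the statement.

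I do not expect a genuine obstacle here, since all the analytic content lives in Theorem \ref{T:q12} (whose proof is deferred to the Appendix) and in the already-established Lemma \ref{L:ETi}. The only point requiring care is the bookkeeping of which single-locus marginal ("same" versus "diff") applies to each coordinate under each $q$, and making sure the $O(N)$ remainder terms in both numerator and denominator are genuinely lower-order so that the ratio converges to the claimed constant (or decays at the claimed rate) — this is immediate from the $N^2$ leading terms being nonzero whenever $s < 1$ and $r > 0$, which are precisely the hypotheses. The cases $s = 1$ and $r = 0$ are excluded here and handled separately via Theorem \ref{T:q12_extreme}.
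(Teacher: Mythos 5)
Your overall strategy is exactly the paper's: for each sampling configuration $q$, identify whether the pair at each locus starts in the single-locus state ``same'' or ``diff'', read off $\text{Var}_q[T_i]=\text{Var}_q[T_j]$ from Lemma \ref{L:ETi}, and divide the covariance asymptotics of Theorem \ref{T:q12} by this common variance. Your $q_{11}$ and $q_{12}$ entries are handled correctly and reproduce the paper's computation, including the Scenario (iv), $q_{12}$ reduction $\frac{(2-s)^2}{2(2-sR)}+s-1$.

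There is, however, a concrete error in your bookkeeping for $q_5$. You assert that in $q_5$ ``the two copies at each locus are split across the two sampled individuals'', i.e.\ that both marginals are in state ``diff''. This is wrong: $q_5=\{(1,1),(2,2)\}$ places \emph{both} gene copies at locus $i$ in one individual and \emph{both} gene copies at locus $j$ in the other, so each locus starts in state ``same'' (see the $(s,s)$ entry for $q_5$ in table \eqref{E:limNN}, and the proof of Corollary \ref{C:q12_extreme}, which states this explicitly). The mistake is invisible in Scenarios (i) and (iii), where $s_N=\tilde{\sigma}/N\to 0$ and both candidate variances are $\asymp 4N^2$, but it breaks Scenarios (ii) and (iv): there $\text{Var}_{\rm same}[T_i]\asymp 4(1-s)N^2$ while $\text{Var}_{\rm diff}[T_i]\asymp(2-s)^2N^2$. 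For instance, in Scenario (iv) dividing $\text{Cov}_{q_5}[T_i,T_j]\asymp \frac{2s^3(1-s)(2R-2sR+s)}{(4-s^2)(2-sR)}\cdot N$ by the correct normalizer $4(1-s)N^2$ yields precisely the stated $\frac{s^3(2R-2sR+s)}{2(4-s^2)(2-sR)}\cdot\frac{1}{N}$, whereas your normalizer $(2-s)^2N^2$ does not. (As an aside, with the correct ``same'' normalization the Scenario (ii), $q_5$ entry of Theorem \ref{T:q12} produces $\frac{2(1-s)}{8\tilde{\rho}^2(1-s)^2+26\tilde{\rho}(1-s)+9}$ rather than the $\frac{2}{8\tilde{\rho}^2(1-s)^2+26\tilde{\rho}(1-s)+9}$ printed in the corollary; that residual factor of $(1-s)$ appears to be an inconsistency between the theorem and the corollary in the paper itself, not something either normalization choice can reconcile.)
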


\begin{corollary}\label{C:q12_extreme}
The following statements hold for all integers $N\geq 2$.
\begin{align}
\intertext{(Total Selfing) When $s_N=1$}
&\begin{dcases}
    \text{Corr}_{q_5}[T_i, T_j]=\frac{(2r_N-1)^2}{(3N+1)(1+2r_N-2r_N^2)}\quad &\text{if }q=q_{5}\\
    \text{Corr}_{q_{11}}[T_i, T_j]\asymp 1\quad &\text{if }q=q_{11}\\
    \text{Corr}_{q_{12}}[T_i, T_j]=\frac{3}{1+2r_N-2r_N^2}-2\quad &\text{if }q=q_{12}
\end{dcases}
\intertext{(No recombination) When   $r_N=0$.}
&\begin{dcases}
\text{Corr}_{q_5}[T_i, T_j]\asymp 2(1-s_N)/9\quad &\text{if }q=q_{5}\\  \text{Corr}_{q_{11}}[T_i, T_j]=1\quad &\text{if }q=q_{11}\\    \text{Corr}_{q_{12}}[T_i, T_j]=1 \quad &\text{if }q=q_{12}
\end{dcases}
\end{align}
\end{corollary}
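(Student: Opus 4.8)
# Proof Proposal for Corollary \ref{C:q12_extreme}

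The plan is to derive these formulas as consequences of Theorem \ref{T:q12_extreme} together with the single-locus moment formulas in Lemma \ref{L:ETi}. The key observation is that the correlation coefficient is built from the covariance (which Theorem \ref{T:q12_extreme} supplies directly) and the two single-locus variances $\textrm{Var}_{q}[T_i]$ and $\textrm{Var}_{q}[T_j]$, and that the marginal law of $T_k$ under any of these two-locus initial states is already pinned down by whether the two gene copies at locus $k$ start in the same individual or in distinct individuals. For the total-selfing case $s_N=1$ and the no-recombination case $r_N=0$, the relevant marginals are read off from $q_5$, $q_{11}$, $q_{12}$: under $q_{12}$ both loci start in state ``same''; under $q_{11}$ both loci start in state ``same'' as well (the two gene copies at each locus lie in two different individuals — wait, one must check the state definitions in \ref{A:States} carefully); under $q_5$ both loci start in state ``same'' too. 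So first I would fix, for each of the three initial states and each locus, the correct single-locus starting configuration, and then invoke Lemma \ref{L:ETi} to write down $\textrm{Var}_{q}[T_i]=\textrm{Var}_{q}[T_j]$ exactly.

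Next I would assemble the correlation coefficient via
\[
\text{Corr}_{q}[T_i,T_j]=\frac{\text{Cov}_{q}[T_i,T_j]}{\sqrt{\textrm{Var}_{q}[T_i]\,\textrm{Var}_{q}[T_j]}}=\frac{\text{Cov}_{q}[T_i,T_j]}{\textrm{Var}_{q}[T_i]},
\]
the last equality because the two marginal variances coincide in each of these six cases. For $s_N=1$: plugging $s_N=1$ into Lemma \ref{L:ETi} gives $\textrm{Var}_{\text{same}}[T_i]=2$ and $\textrm{Var}_{\text{diff}}[T_i]=N+2-1=N+1$ (I would double-check which one applies to each of $q_5,q_{11},q_{12}$). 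Dividing the exact covariance formulas from Theorem \ref{T:q12_extreme} by this variance yields the stated exact formulas for $q_5$ and $q_{12}$, and the $\asymp 1$ claim for $q_{11}$ follows since there $\text{Cov}_{q_{11}}\asymp N^2$ and $\textrm{Var}_{q_{11}}[T_i]$ is also of order $N^2$ with matching leading coefficient (this needs the leading-order constant of $\textrm{Var}_{q_{11}}[T_i]$ to equal that of the covariance, which I would verify from the exact expressions). For $r_N=0$: here recombination is absent so the two loci have literally the same genealogy, giving $T_i=T_j$ and hence $\text{Corr}_{q_{11}}=\text{Corr}_{q_{12}}=1$ identically; this also matches $\text{Cov}_{q_{11}}[T_i,T_j]=\textrm{Var}_{q_{11}}[T_i]$ and $\text{Cov}_{q_{12}}[T_i,T_j]=\textrm{Var}_{q_{12}}[T_i]$ from Lemma \ref{L:ETi}. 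For $q_5$ with $r_N=0$, I divide $\text{Cov}_{q_5}[T_i,T_j]\asymp 8(1-s_N)^2 N^2/9$ by $\textrm{Var}_{q_5}[T_i]\asymp 4(1-s_N)N^2$ to get $\asymp 2(1-s_N)/9$.

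The main obstacle I anticipate is purely bookkeeping: correctly matching each two-locus state $q_5,q_{11},q_{12}$ to the appropriate single-locus marginal configuration (``same'' vs.\ ``diff'') at each locus, since the variance one divides by depends on this, and a mismatch would throw off every constant. In particular for $q_{11}$ under total selfing, establishing $\text{Corr}\asymp 1$ requires knowing that both the covariance and the variance have leading term $N^2$ with the \emph{same} coefficient; I would get the variance's leading coefficient either from the exact two-locus computation behind Theorem \ref{T:q12_extreme} or by the heuristic that under $s_N=1$ every individual is a selfer so the two chromosomes of the single sampled individual coalesce quickly and thereafter the process is effectively the ``diff'' single-locus chain, giving $\textrm{Var}_{q_{11}}[T_i]\asymp N^2$ — then checking the covariance leading coefficient is also $1$. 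Everything else is substitution into closed-form rational expressions and taking $N\to\infty$, which is routine.
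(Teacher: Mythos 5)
Your proposal is correct and follows essentially the same route as the paper's own proof: match each of $q_5,q_{11},q_{12}$ to the single-locus marginal (``same'' for $q_5$ and $q_{12}$, ``diff'' for $q_{11}$), read off $\textrm{Var}_q[T_i]=\textrm{Var}_q[T_j]$ from Lemma~\ref{L:ETi}, and divide the covariances of Theorem~\ref{T:q12_extreme} by that common variance. The only slip is your intermediate claim $\textrm{Var}_{\text{diff}}[T_i]=N+1$ at $s_N=1$ (it is $(2-s_N)^2N^2+(2-3s_N)N+2=N^2-N+2$), but your subsequent reasoning correctly uses that this variance is $\asymp N^2$ with leading coefficient $1$, matching the covariance, so the argument goes through as in the paper.
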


\bigskip


Notice that the correlation coefficient is of order 1 when $r_N=O(1/N)$, regardless of whether $s_N$ is of order $O(1)$ or $O(1/N)$.
This makes sense since the correlation is large (or order $O(1)$) when recombination probability is small. 
In the extreme case when $\tilde{\rho}\downarrow 0$ in scenarios (i) and (ii), we have $\text{Corr}_{q}[T_i, T_j]\to 1$ for $q\in\{q_{11},q_{12}\}$.
Furthermore, when $r_N=O(1)$ (high recombination rate), the correlation coefficients are small (of order $O(1/N)$ or smaller) except when $q=q_{12}$ and $s=O(1)$ simultaneously.

\subsection{Scenario (i), \texorpdfstring{$s_N=\tilde{\sigma}/N$ and $r_N=\tilde{\rho}/N$}{sigma\_N=sigma'/N and rN='/N}} 
If we take the population scaled recombination rate to be $\rho = 4Nr_N$ (i.e. $\tilde{\rho}=\rho/4$), then
the RHS of \eqref{E:coeff1} becomes
$$\frac{2\tilde{\rho}+9}{8\tilde{\rho}^2+26\tilde{\rho}+9} =\frac{\rho+18} {\rho^2+13\rho+18} \qquad \text{for any initial state in }\{q_{11},\,q_{12}\}$$ 
which agrees exactly with \citet{griffiths1981neutral}, \citet[eqn. (A13)]{pluzhnikov1996optimal}, \citet[eqn.(5)]{kaplan1985use}, \citet[eqn.(13)]{hudson1990gene}, and \citet[eqn. (A35)]{Birkner255} if $\tilde{\rho}=2r$. See also  \citet[(7.28)-(7.30)]{wakeley2009coalescent}. Given that the selfing probability is $O(1/N)$ it makes sense that the result is the same regardless of whether the two chromosomes begin in the same individual or not. 

Similarly, from the results in \ref{A:expectation}, if we take  $\rho = 4Nr_N$ (i.e. $\tilde{\rho}=\rho/4$), then the limiting correlation coefficient
 \eqref{E:coeff1}  is equal to
\begin{align}
\frac{2}{8\tilde{\rho}^2+26\tilde{\rho}+9}=\frac{4} {\rho^2+13\rho+18} \qquad \text{for any initial state in }\{q_i\}_{i=1}^{6}  \\
\frac{3}{8\tilde{\rho}^2+26\tilde{\rho}+9}=\frac{6} {\rho^2+13\rho+18} \qquad \text{for any initial state in }\{q_i\}_{i=7}^{10}
\end{align}
which agree with \cite[(7.29)-(7.30)]{wakeley2009coalescent}.

\subsection{Scenario (ii), \texorpdfstring{$s_N=s$ and $r_N=\tilde{\rho}/N$}{sN=s and rN='/N}}

If we plug $s=0$ into \eqref{E:coeff2} we obtain the exact formulas in \eqref{E:coeff1}. This makes sense because $\tilde{\sigma}/N\to 0$  when $N\to\infty$. This scenario is what
 \cite{nordborg2000linkage} considered in their setting.

For state $q_{12}$, as we increase the selfing probability to $1$, $\textrm{Corr}_q[T_i ,T_j]$ approaches $1$. This makes sense since the loci will stay in the same individual and undergo recombination with low probability. Hence $\mathbb P(T_i=T_j)\to 1$.

An interesting point is that the correlation coefficient stays positive even when $\tilde{\rho}\to\infty$. Precisely,
\begin{equation}\label{positiveCorr_ii}
    \lim_{\tilde{\rho}\to\infty} \lim_{N\to\infty}\text{Corr}_{q_{12}}[T_i, T_j]=\frac{s}{2} >0.
\end{equation}

\subsection{Scenario (iii), \texorpdfstring{$s_N=\tilde{\sigma}/N$ and $r_N=r$}{sigma\_N=sigma'/N and r\_N=r}}
\begin{itemize} \item \textbf{State $q_{12}$:} If we set $\tilde{\sigma}=1$, or $s_N=1/N$ we can compare the results to the Simplified DDTWF from \cite{king2018non}. Even though \cite{king2018non} uses a two-sex model, the simplified model does not keep track of whether lineages are in the same individual or not. We note that if $\tilde{\sigma}=1$ then the formula for $\text{Corr}_{q_{12}}[T_i, T_j]$ becomes exactly $\frac{2r^2-2r+1}{2r(2-r)}\cdot\frac{1}{N}$, which is equal to $\text{Corr}_{q_{11}}[T_i, T_j]$ in \eqref{E:coeff3}. This makes sense since when $\tilde{\sigma}=1$ we obtain a uniform distribution for parent chromosomes meaning we expect $\text{Corr}_{q_{11}}[T_i, T_j]=\text{Corr}_{q_{12}}[T_i, T_j]$. Additionally, setting $r=1/2$ gives that  
\begin{equation}\label{GeneralizedKingetal}
\text{Corr}_{q_{12}}[T_i ,T_j] \asymp \frac{3\tilde{\sigma}+1}{12N}.    
\end{equation}
Note when $\tilde{\sigma}=0$ (i.e. Wright-Fisher with no selfing similar to two-sex model) we also obtain $\textrm{Corr}_{q_{12}}[T_i, T_j]\asymp \frac{1}{12N}$ which is the same as the asymptotic  lower bound in \cite{king2018non}. 

\item \textbf{State $q_{11}$:} When $r=1/2$, the RHS of \eqref{E:coeff3} gives that  $\text{Corr}_{q_{11}}[T_i ,T_j] \asymp \frac{1}{3N}$. Here $q_{11}$ corresponds to to the simplified DDTWF Config 1, in \cite{king2018non}.

\item \textbf{State $q_{5}$:} Here we obtain an expression for $\textrm{Corr}_{q_{5}}[T_i, T_j]$ that is of order $O(N^{-2})$. In states $q_{11}$ and $q_{12}$, the correlation was $O(N^{-1})$ since there is an $O(N^{-1})$ probability that all four gene copies belong to the same individual before coalescence. This is not the case for state $q_5$, where  the two pairs (of gene copies) begin in two different individuals, and there is $O(N^{-2})$ probability that all gene copies belong to the same individual before a coalescence occur.
\end{itemize}

\FloatBarrier

\subsection{Scenario \texorpdfstring{$(iv)$, $s_N=s$ and $r_N=r$}{TEXT}}

\begin{itemize}
    \item \textbf{State $q_{12}$:} First, we point out an analogous statement to \eqref{positiveCorr_ii}. Namely, the asymptotic correlation coefficients stays positive even when the recombination probability tends to 1:
\begin{equation}\label{positiveCorr_iv}
    \lim_{r\to 1} \lim_{N\to\infty}\text{Corr}_{q_{12}}[T_i, T_j]=\frac{s}{2} >0.
\end{equation}

Next,  we explain why the complicated expression in \eqref{cov4}  make sense by giving an alternative, simpler argument. Recall that we let $R=2r^2-2r+1$. Then $-2r^2+2r+1=2-R$. We shall show that
    \begin{equation}\label{q12_simple}
    \E_{q_{12}}[T_iT_j] \asymp \frac{2(1-s)(2-s)^2}{(2-s R)}\cdot N^2    \qquad \text{ as }   N\to\infty. 
    \end{equation}

\begin{proof}[Proof of \eqref{q12_simple}]
We will solve for the asymptotics of $\E_{q_{12}}[T_iT_j]$ through a simplified first step analysis of our 13$\times$13 transition matrix $\bf \Pi_N$ given in 
\ref{A:Matrix}

We note from \ref{A:Matrix} that the transition probabilities from $q_{12}$ to $q_0$ and to $q_{12}$ are  $\frac{s(2-R)}{2}$ and $\frac{s R}{2}$ respectively, independent of $N$. 
This implies that the transition probability from $q_{12}$ to any state other than $q_0$ or $q_{12}$ is $1-\frac{s(2-R)}{2}-\frac{s R}{2}=1-s$.  
                
If we transition to $q_0$ (in the first step) then the value of $\E [T_iT_j]$ is $O(N)$ since either $T_i=1$ or $T_j=1$, and $\E [T_i]$ and $\E[T_j]$ are both $O(N)$. 
With probability $1-s$ we  transition to one of the states in $\{q_1, \dots, q_{11}\}$. According to tables \eqref{E:limN^2} and \eqref{E:limNN}, $\lim_{N\to\infty}\E_{q}[T_iT_j]/N^2=\lim_{N\to\infty}\E_{q}[T_i]\E_{q}[T_j]/N^2$ whenever $q\in\{q_1, \dots, q_{11}\}$ in Scenario $(iv)$. 

Furthermore, conditional on  transitioning to one of the states in $\{q_1, \dots, q_{11}\}$, we must end up in one of the states $\{q_6, q_{10}, q_{11}\}$. It is impossible to transition from $q_{12}$ to any of the states $q_1$ through $q_9$ since loci from the same chromosome must end up in the same parent individual. For any state $q$ in which the pairs of gene copies at locus $i$ and locus $j$ both  belong to different individuals, we have
$(2-s)^2=\lim_{N\to\infty}\E_{q}[T_i]\E_{q}[T_j]/N^2=\lim_{N\to\infty}\E_{q}[T_iT_j]/N^2$  according to Lemma \ref{L:ETi}. This is the case for states $q_6, q_{10}, q_{11}$. 

In summary, by  \eqref{E:expectation}, 
    \begin{align} 
    &\E_{q_{12}}[T_iT_j] \\
    =&\, \sum_{k\in \mathcal S} ({\bf \Pi}_N)_{q_{12},k}\,\E_k[T_iT_j]+\E_{q_{12}}[T_i]+\E_{q_{12}}[T_j]-1\\
     \asymp&\, ({\bf \Pi}_N)_{q_{12},q_{12}}\E_{q_{12}}[T_iT_j]+\sum_{k\in \mathcal S-\{q_0, q_{12}\}}({\bf \Pi}_N)_{q_{12}, k} (2-s)^2\cdot N^2 \\
    \asymp&\, \frac{s R}{2}\cdot \E_{q_{12}}[T_iT_j] + (2-s)^2\cdot N^2,
    \end{align}   
 where  we discard terms that are of order $O(N)$ or smaller, and only keep the terms that are or order  $O(N^2)$ on the right hand side.  Upon grouping the $\E_{q_{12}}[T_iT_j]$ terms gives
 \eqref{q12_simple}.
\end{proof}

From the previous Lemma we immediately obtain \eqref{cov4} for $q_{12}$.

    \item \textbf{State $q_{11}$:} For state $q_{11}$ plugging in $s=0$ into \eqref{E:coeff4} we obtain the exact formulas for $\text{Corr}_{q_{11}}[T_i, T_j]$ in \eqref{E:coeff3}. This makes sense because $\tilde{\sigma}/N\to 0$ when $N\to\infty$. 
    \item \textbf{State $q_{5}$:} For state $q_5$ plugging in $s=0$ yields $\text{Corr}_{q_{5}}[T_i, T_j]\asymp 0$, which is as anticipated since in \eqref{E:coeff3} the expression for $\text{Corr}_{q_5}[T_i, T_j]$ is of order $O(1/N^2)$.
\end{itemize}

\FloatBarrier


\subsection{Variance of Tajima's estimator}

An application of Theorem \ref{T:q12} is that it immediately give asymptotic formulas for the variance of Tajima's estimator for all 4 scenarios for the strengths of recombination and of selfing.
To see this, let $\mu$ be the mutation rate per locus per generation.
The Tajima's estimator $\hat\theta_{(n)}$ for the population-scaled mutation rate $\theta=4N\mu$, for a sample of $n$ pairs of loci, is given by 
$$\hat \theta_{(n)}:=\frac{1}{n}\sum_{i=1}^n\hat \theta_i,$$ 
where $\hat \theta_i$ is the average number of pairwise differences for the sample at locus $i$. 
From \cite[eqn. (4)]{king2018non},  under the assumption of the infinite site model,
$$\lim_{n\to\infty}\text{Var}_{q}[\hat\theta_{(n)}] = 4\mu^2\text{Cov}_{q}[T_i, T_j]$$
for each fixed population size $N$, where
the asymptotic value $\text{Cov}_{q}[T_i, T_j]$ as $N\to\infty$ can be read off from Theorem \ref{T:q12}.

For example,  the right hand side 
 of the above display for sampling configuration $q_{12}$ is
\[
 4\mu^2\text{Cov}_{q_{12}}[T_i, T_j]\asymp  4\left(\frac{\theta}{4N}\right)^2\text{Corr}_{q_{12}}[T_i, T_j]\,4N^2 \asymp \theta^2\frac{3\tilde{\sigma}+1}{12N}  
\]
when $s_N=\tilde{\sigma}/N$ and $r_N=1/2$ for all $N$ (i.e. unlinked loci in Scenario (iii)). This follows from Lemma \ref{L:ETi} and \eqref{GeneralizedKingetal}. Similar asymptotics for $\lim_{n\to\infty}\text{Var}[\hat\theta_{(n)}]$ can be obtained for other scenarios by using Theorem \ref{T:q12}.

From \eqref{positiveCorr_ii} and \eqref{positiveCorr_iv},  when $s_N=s$ is of order $O(1)$ and $q=q_{12}$, the variance of the Tajima's estimator remains positive even when recombination rate tends to infinity and for infinitely many loci:
\begin{equation}\label{Positive_Var}
\lim_{N\to\infty}\lim_{n\to\infty}\text{Var}_{q_{12}}[\hat\theta_{(n)}] \to \frac{\theta^2 (1-s)^2s}{2} >0
\end{equation}
as $\tilde{\rho}\to\infty$ in scenario (ii), or as 
$r\to1$ in scenario (iv). 

\section{Simulations}\label{S:simulations}

Here we present results from Monte Carlo simulations to support results from Corollary \ref{C:q12q11}. To estimate the value of $\text{Corr}[T_i, T_j]$, we run a Monte Carlo simulation that will generate values of $T_i$ and $T_j$. The code can be found in 
\ref{A:Code}.
The input parameters are $N, s_N, r_N$ which remain fixed for each run, and an initial state $q\in\{q_{i}\}_{i=1}^{12}$. 

In a single trial of our simulation, we obtain a pair of integers $(T_i,T_j)$ by simulating the transition matrix of the ancestral process.
Since we want our simulations to compute $T_i$ and $T_j$,  we need to refine the definition for the coalescent state $q_0$ and extend  our 13$\times$13 transition matrix $\bf \Pi_N$ (explicitly computed in \ref{A:Matrix}) to a larger matrix.

Following the notation in the Appendix of  \cite{king2018non},
we partition the state $q_0$ into five substates 
$$\mathcal{C}\,=\,\Big\{\{\}, \{1, 1\}, \{2, 2\}, \{(1, 1)\}, \{(2, 2)\}\Big\},$$
where 
\setlength{\arraycolsep}{3pt}
\begin{align*}
    \{\} = \begin{pmatrix}
     &
    \end{pmatrix} ,\ 
    \{1, 1\}=\begin{pmatrix}
        {\color{red} \bullet }&  \\
    \end{pmatrix} \begin{pmatrix}
        {\color{red} \bullet }&  \\
    \end{pmatrix},\ 
    \{(1, 1)\}=\begin{pmatrix}
        {\color{red} \bullet } &  {\color{red} \bullet }\\
    \end{pmatrix} \\
    \{2, 2\}=\begin{pmatrix}
        {\color{OliveGreen} \blacktriangle }&  
    \end{pmatrix} \begin{pmatrix}
          {\color{OliveGreen} \blacktriangle } &
    \end{pmatrix} ,\
    \{(2, 2)\} = \begin{pmatrix}
        {\color{OliveGreen} \blacktriangle }&  
         {\color{OliveGreen} \blacktriangle }
    \end{pmatrix}
\end{align*}  
in which $\{\}$ represents the state when both pairs of gene copies at the two loci have coalesced. 
States $\{1, 1\}$ and $\{(1, 1)\}$ correspond to the case when locus $j$ has coalesced, but locus $i$ has not. For $\{1, 1\}$ the gene copies at loci $i$ belong to different individuals, and for $\{(1, 1)\}$ the the gene copies at loci $i$ belong to  the same individual. $\{2, 2\}$ and $\{(2, 2)\}$ are defined analogously to $\{1, 1\}$ and $\{(1 ,1)\}$ respectively, except here locus $i$ has coalesced rather than locus $j$.
 
After a pair of gene copies in a loci have coalesced we no longer need to keep track of it, so we do not include coalesced pairs in these five substates. We 
let $\bf \Pi_N^*$ be the $17\times 17$ Markov transition matrix for the states $\mathcal{S}\cup \mathcal{C}$, in which we split the original state $q_0$ into its five sub-states. 

We are given an initial state $q$ at generation $g=0$. We would like to run our simulation backwards in time until the lineages at each locus have coalesced. We can simulate these lineages backwards in time by first simulating the state of the loci a single generation backwards in time, i.e. generation $g=1$. Simulating the state in generation $g=1$ can be done by selecting a random state  based on our 17$\times$17 transition matrix $\bf \Pi_N^*$. If we are in the state $q_i$ currently, then we pick a state in the generation prior by selecting a state at random with probabilities given by the $i$-th row of the transition matrix. We repeat this process until we have reached states in which \textit{both} pairs of loci have coalesced. Algorithm 1 gives the psuedo-code for running a single trial. 

\noindent

\FloatBarrier
\begin{algorithm}
\caption{Obtaining $\{T_i,T_j\}$ in a single trial} \label{A:algorithm}
\begin{algorithmic} 
\State Input: parameters $N, s_N, r_N$ and $\bf \Pi_N^*$, and the initial state $q$ 
\State $T_i=0,\ T_j=0$
\While{$q\ne \{\}$}
\State Randomly select a state $q_{\text{next}}$ in $\mathcal S\cup \mathcal C$ with probability corresponding to $\bf \Pi_N^*$ 
\If{$q\ne \{1, 1\}$ and $q\ne \{(1, 1)\}$}  
    \State $T_i = T_i+1$
\EndIf   
\If{$q\ne \{2, 2\}$ and $q\ne \{(2, 2)\}$} 
    \State $T_j = T_j+1$
\EndIf
\State $q=q_{\text{next}}$
\EndWhile
\State Return the pair $(T_i,\ T_j)$ 
\end{algorithmic}
\end{algorithm}
\FloatBarrier

This gives a single realization for $\{T_i,T_j\}$ (and hence one realization of the product $T_iT_j$). The code for these simulations can be found in \ref{A:Code}.

We perform $M$ independent trials to obtain $M$ simulated pairs of values $(T_i,T_j)$. Call these values $\{(T^{(k)}_i,T^{(k)}_j)\}_{k=1}^M$.
We estimate the correlation coefficient $\text{Corr}[T_i, T_j]$ by the Pearson correlation coefficient
\begin{align*}
    \frac{M(\sum_{k=1}^M T_i^{(k)}T_j^{(k)})-(\sum_{k=1}^M T_i^{(k)})(\sum_{k=1}^M T_j^{(k)})}{\sqrt {\left(M\cdot \sum_{k=1}^M (T_i^{(k)})^2-(\sum_{k=1}^M T_i^{(k)})^2\right) \left(M\cdot \sum_{k=1}^M (T_j^{(k)})^2-(\sum_{k=1}^M T_j^{(k)})^2\right) }}.
\end{align*}

We computed this for different values of $s_N$ and $r_N$, then compare this estimated value of $\text{Corr}[T_i, T_j]$ with the corresponding analytical formulas in Corollary \ref{C:q12q11}. Figures \ref{S:q12simulations} and \ref{S:q11simulations} show these comparisons for the initial states $q_{12}$ and $q_{11}$ respectively.


\subsection{Simulation results for states \texorpdfstring{$q_{12}$ and $q_{11}$}{q12 and q11}} \label{S:q12sims}

In Figure \ref{S:q12simulations}, we plot the correlation coefficient $\text{Corr}[T_i, T_j]$ for initial condition $q_{12}$ and for all four scenarios,  as a function of  $r, (\tilde{\rho})$ (left column) and  as a function of  $s, (\tilde{\sigma})$ (right column). 
The 4 figures on the left column  were plotted for three fixed selfing probabilities $s=0.1, 0.3, 0.9$ and $\tilde{\sigma}=1/N, 3/N, 5/N$. The right figures were plotted  for three fixed recombination probability $r=0.1, 0.3, 0.9$ (or $r=0.1, 0.3, 0.5$ when the correlation is symmetric w.r.t $r=1/2$) and $\tilde{\rho}=1/N, 3/N, 5/N$. In all figures \ref{S:q12simulationsa}--\ref{S:q12simulationsh}, the initial state is $q_{12}$. The population size is $N$ and the number of trials per point is $M$, which varies for different plots. The simulation results (represented by the circles) are obtained by taking average of $M$ simulations of the one-sex, diploid, discrete time Wright-Fisher model. The analytical formula, represented by lines, is given by corresponding formulas from Corollary \ref{C:q12q11}.

Figure \ref{S:q11simulations} is analogous to Figure \ref{S:q12simulations}, but for initial state $q_{11}$.

\medskip

Figures  \ref{S:q12simulations} and \ref{S:q11simulations} confirm the accuracy of our analytical results for the asymptotic value of $\textrm{Corr}[T_i, T_j]$ as $N\to\infty$, for initial samplings $q_{12}$ and $q_{11}$.
More precisely, the Monte Carlo approximation for the correlation coefficient $\textrm{Corr}[T_i, T_j]$, based on discrete simulations with $M$ independent trials and when $N$ and $M$ are both large,
agree well with our analytical formula in Corollary \ref{C:q12q11} for all 4 scenarios. 



As seen from the right columns of  Figures  \ref{S:q12simulations} and \ref{S:q11simulations}, selfing increases the correlation coefficient for fixed $r_N$. 
 In all plots except Figures \ref{S:q12simulationsb}, \ref{S:q11simulationsb} and \ref{S:q11simulationsd},  correlation is an increasing function of $s_N$. 
In Figures  \ref{S:q12simulationsb}, \ref{S:q11simulationsb} and \ref{S:q11simulationsd}, the correlation coefficient is constant in $s_N$, so selfing has negligible effect on the correlation coefficient unless $s_N$ is of order $O(1)$.
Comparing the right column of  Figure  \ref{S:q12simulations} with that of  Figure \ref{S:q11simulations}, we see that selfing has a stronger effect  in $q_{12}$ than in $q_{11}$, 
especially in Scenario (iv) in which both $r_N$ and $s_N$ are of order $O(1)$. 


As seen from the left columns of  Figures  \ref{S:q12simulations} and \ref{S:q11simulations}, recombination decreases the  correlation coefficient in general for fixed $s_N$, as expected. Worth-noting is that
correlation decreasing may not maynot decrease to $0$. 
For plots \ref{S:q12simulationsa} and \ref{S:q11simulationsa} for scenario $(i)$, correlation decreasing from $1$ to $0$ as $\tilde{\rho}\uparrow \infty$. However,
for scenarios $(ii)$ and $(iv)$ in Figures  \ref{S:q12simulations},
the correlation coefficient tends to $s/2$ as $r_N$ increases.  
Comparing the left column of  Figure  \ref{S:q12simulations} with that of  Figure \ref{S:q11simulations}, we see that recombination has a stronger effect (decreases the correlation faster as $r_N$ increases) in $q_{11}$ than in $q_{12}$, 
especially in Scenario (iv) in which both $r_N$ and $s_N$ are of order $O(1)$. 

For scenario $(iv)$ in Figure \ref{S:q12simulationsg} we see a function symmetric along $r_N=1/2$ in Figure \ref{S:q12simulationsg}.
This result in an increase in correlation when $r_N=r$ increases from $1/2$ to 1. An increase in correlation  when $r_N=r$ increases from $1/2$ to 1 is also observed in 
Figure \ref{S:q12simulationse}.
However, the case $r_N\geq 1/2$ is biologically irrelevant.

\FloatBarrier

\begin{figure}
     \centering
     \hspace*{\fill}
     \begin{subfigure}[b] {0.37\textwidth}
         \centering        \includegraphics[width=\textwidth]{ 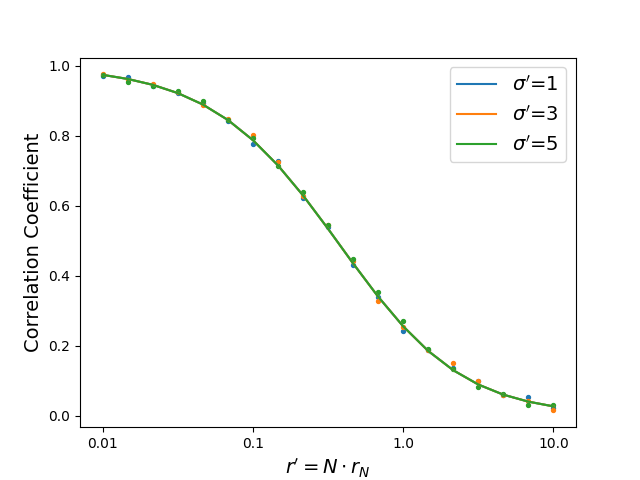}
         \caption{Scenario $(i)$ with $N=500$ and $M=20000$}
         \label{S:q12simulationsa}
     \end{subfigure}
     \hfill
     \begin{subfigure}[b]{0.37\textwidth}
         \centering
         \includegraphics[width=\textwidth]{ 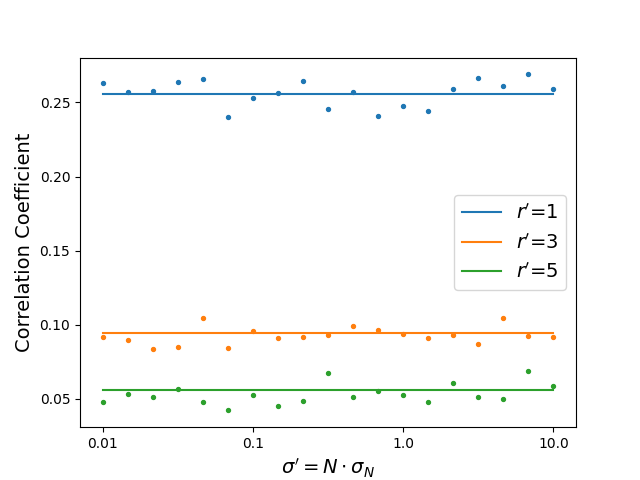}
         \caption{Scenario $(i)$ with $N=500$ and $M=20000$}
         \label{S:q12simulationsb}
     \end{subfigure}
     \hspace*{\fill}

     \hspace*{\fill}
     \begin{subfigure}[b]{0.37\textwidth}
         \centering   \includegraphics[width=\textwidth]{ 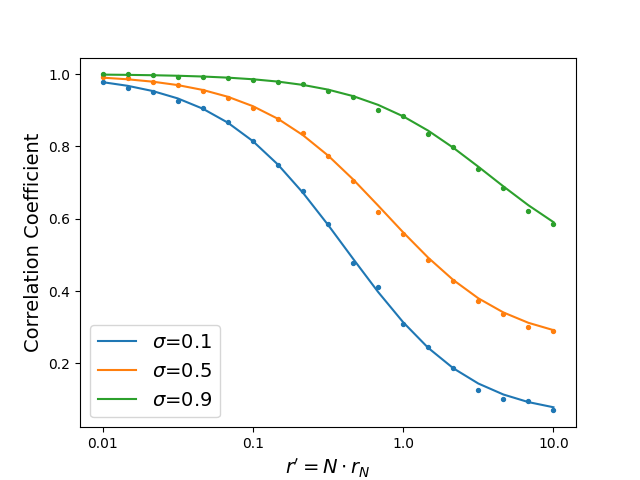}
         \caption{Scenario $(ii)$ with $N=500$ and $M=20000$}
         \label{S:q12simulationsc}
     \end{subfigure}
     \hfill
     \begin{subfigure}[b]{0.37\textwidth}
         \centering
         \includegraphics[width=\textwidth]{ 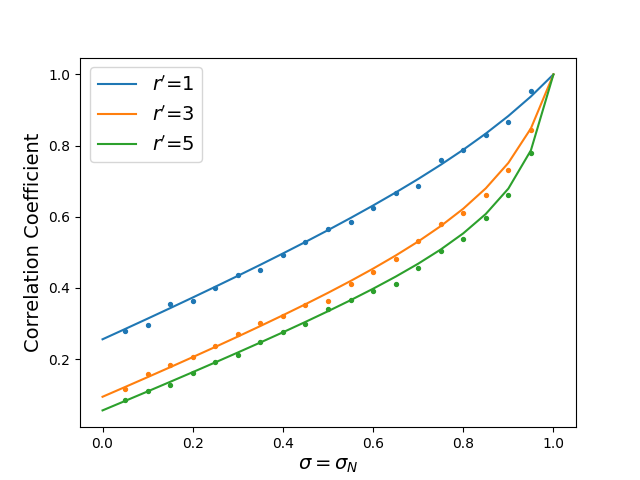}
         \caption{Scenario $(ii)$ with $N=500$ and $M=20000$}
         \label{S:q12simulationsd}
     \end{subfigure}
     \hspace*{\fill}

     \hspace*{\fill}
     \begin{subfigure}[b]{0.37\textwidth}
         \centering   \includegraphics[width=\textwidth]{ 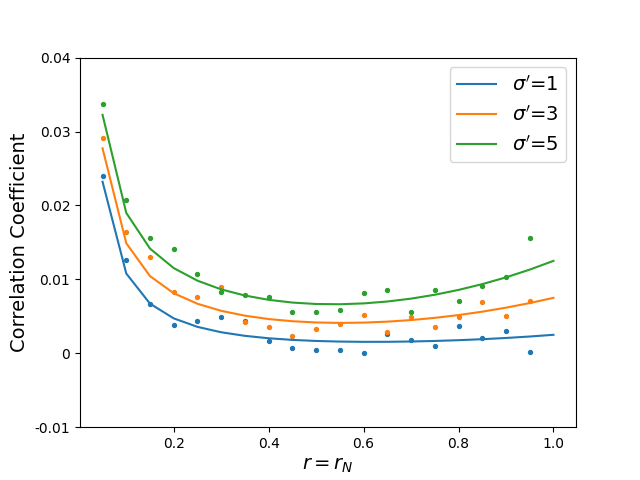}
         \caption{Scenario $(iii)$ with $N=200$ and $M=500000$}
         \label{S:q12simulationse}
     \end{subfigure}
     \hfill
     \begin{subfigure}[b]{0.37\textwidth}
         \centering         \includegraphics[width=\textwidth]{ 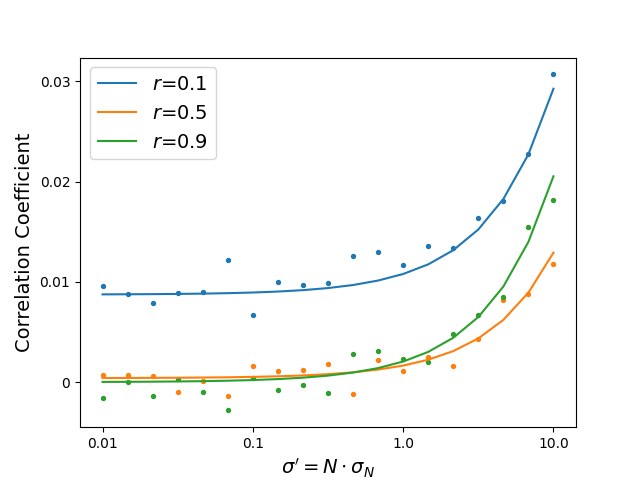}
         \caption{Scenario $(iii)$ with $N=500$ and $M=20000$}
         \label{S:q12simulationsf}
     \end{subfigure}
     \hspace*{\fill}

     \hspace*{\fill}
     \begin{subfigure}[b]{0.37\textwidth}
         \centering   \includegraphics[width=\textwidth]{ 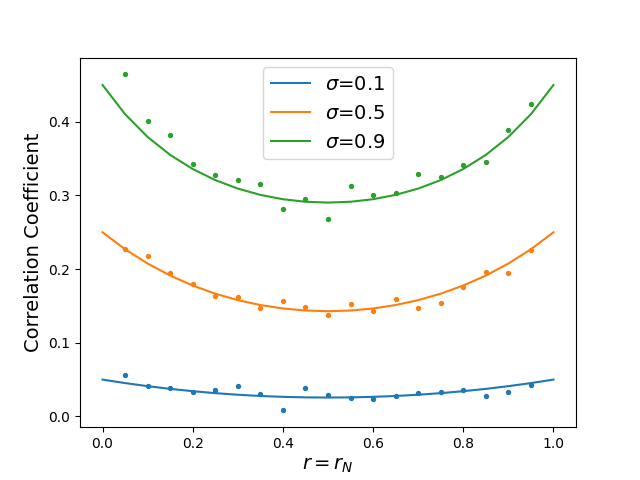}
         \caption{Scenario $(iv)$ with $N=1000, M=40000$}
         \label{S:q12simulationsg}
     \end{subfigure}
     \hfill
     \begin{subfigure}[b]{0.37\textwidth}
         \centering         \includegraphics[width=\textwidth]{ 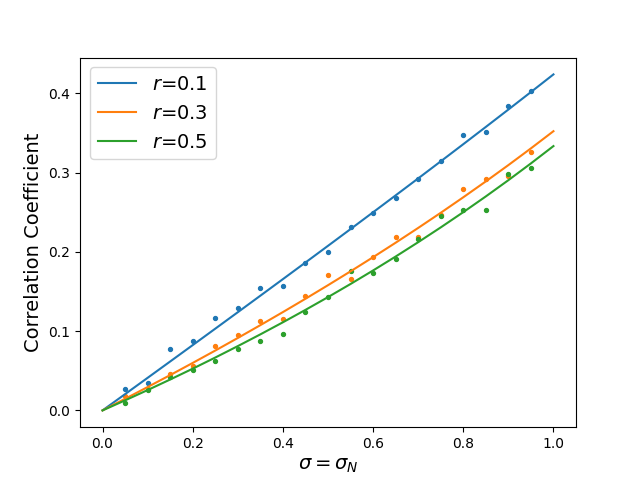}
         \caption{Scenario $(iv)$ with $N=1000, M=40000$}
        \label{S:q12simulationsh}
     \end{subfigure}
     \hspace*{\fill}
\caption{Correlation coefficient for initial state $q_{12}$} \label{S:q12simulations}
\end{figure}
\FloatBarrier



\FloatBarrier
\begin{figure}
     \centering
     \hspace*{\fill}
     \begin{subfigure}[b]{0.355\textwidth}
         \centering         \includegraphics[width=\textwidth]{ 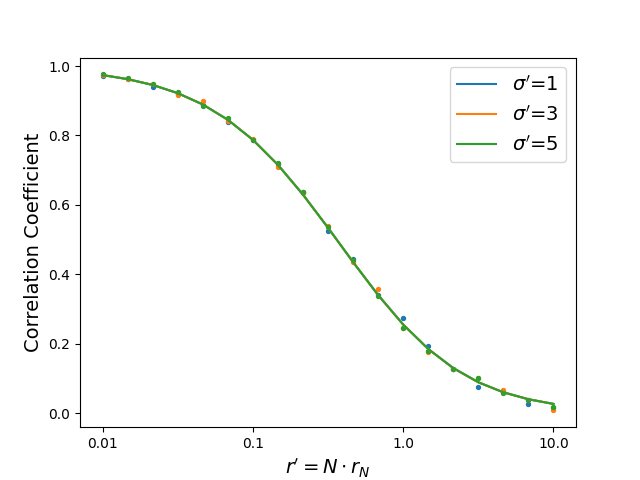}
         \caption{Scenario $(i)$ with $N=500$ and $M=20000$}
         \label{S:q11simulationsa}
     \end{subfigure}
     \hfill
     \begin{subfigure}[b]{0.355\textwidth}
         \centering         \includegraphics[width=\textwidth]{ 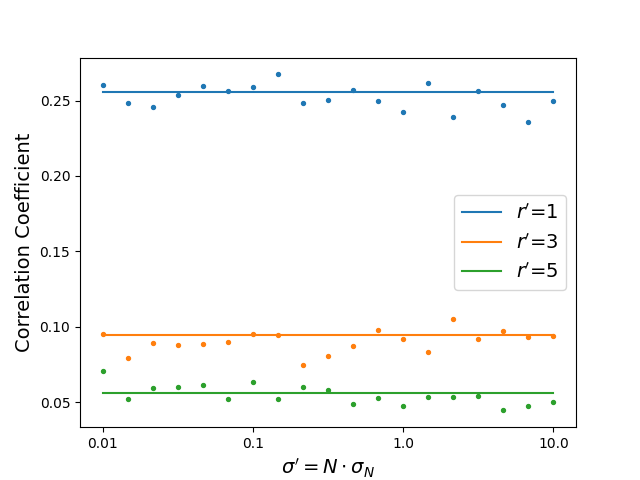}
         \caption{Scenario $(i)$ with $N=500$ and $M=20000$}
         \label{S:q11simulationsb}
     \end{subfigure}
     \hspace*{\fill}

     \hspace*{\fill}
     \begin{subfigure}[b]{0.355\textwidth}
         \centering   \includegraphics[width=\textwidth]{ 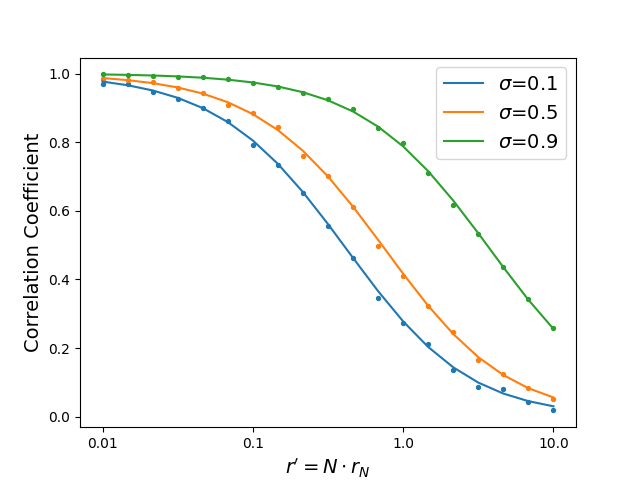}
         \caption{Scenario $(ii)$ with $N=500$ and $M=20000$}
         \label{S:q11simulationsc}
     \end{subfigure}
     \hfill
     \begin{subfigure}[b]{0.355\textwidth}
         \centering         \includegraphics[width=\textwidth]{ 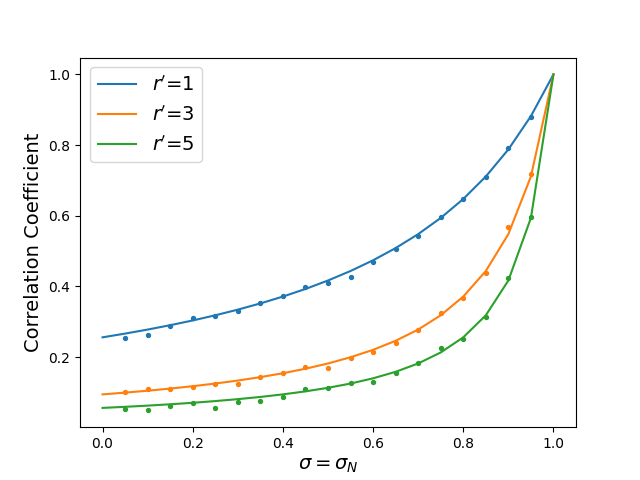}
         \caption{Scenario $(ii)$ with $N=500$ and $M=20000$}
         \label{S:q11simulationsd}
     \end{subfigure}
     \hspace*{\fill}

     \hspace*{\fill}
     \begin{subfigure}[b]{0.355\textwidth}
         \centering   \includegraphics[width=\textwidth]{ 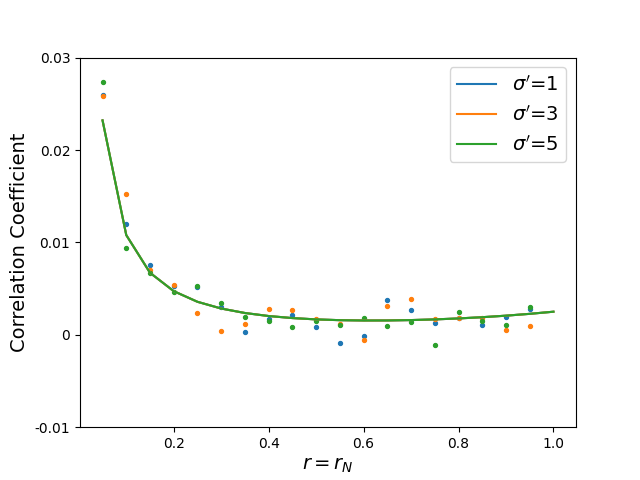}
         \label{q11s1:Nfr1}
         \caption{Scenario $(iii)$ with $N=200$ and $M=500000$}
         \label{S:q11simulationse}
     \end{subfigure}
     \hfill
     \begin{subfigure}[b]{0.355\textwidth}
         \centering         \includegraphics[width=\textwidth]{ 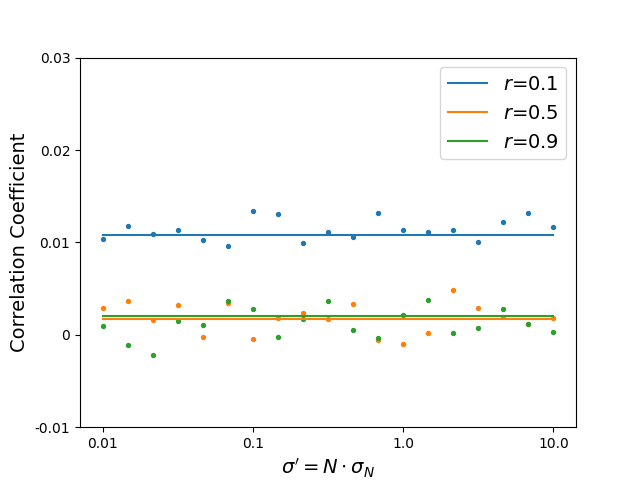}
         \label{q11s1:Nr1f}
         \caption{Scenario $(iii)$ with $N=200$ and $M=500000$}
         \label{S:q11simulationsf}
     \end{subfigure}
     \hspace*{\fill}

     \hspace*{\fill}
     \begin{subfigure}[b]{0.355\textwidth}
         \centering   \includegraphics[width=\textwidth]{ 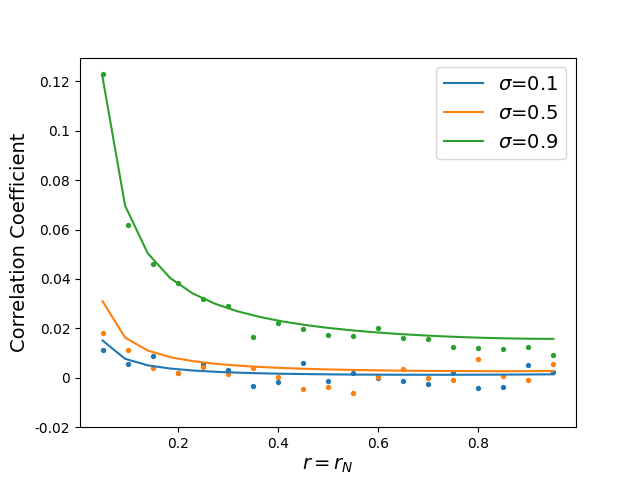}
         \caption{Scenario $(iv)$ with $500$ and $M=100000$}
         \label{S:q11simulationsg}
     \end{subfigure}
     \hfill
     \begin{subfigure}[b]{0.355\textwidth}
         \centering         \includegraphics[width=\textwidth]{ 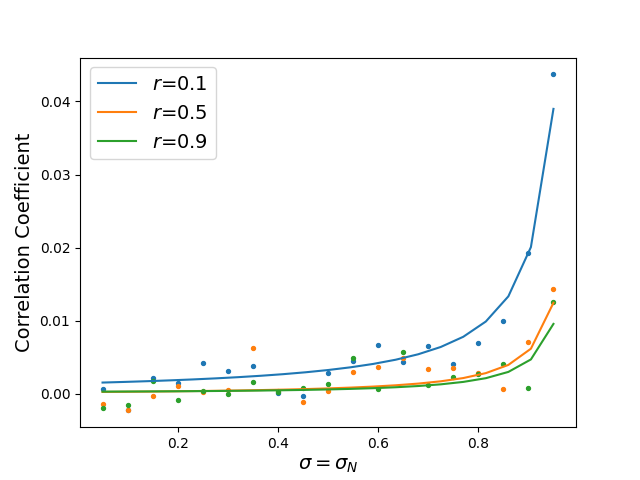}
         \caption{Scenario $(iv)$ with $N=1200$ and $M=150000$}
         \label{S:q11simulationsh}
     \end{subfigure}
     \hspace*{\fill}
\caption{Correlation coefficient for initial state $q_{11}$}
\label{S:q11simulations}
\end{figure}

\FloatBarrier

\medskip






\section{Discussion}\label{S:Discussion}
 





In this paper, we considered a diploid Wright-Fisher model with a single mating type and partial selfing, and studied the coalescence times at two loci under four scaling scenarios depending on the relative strengths of the selfing versus recombination. We established asymptotic formulas for the covariance  of the coalescence times under these four scaling scenarios, for all possible sampling configurations in the model.


We have not considered other effects such as selection and fluctuations in population size, and focuses on sample size two, which already yields interesting formulas. Our method can in principle be extended to study correlation for more than two samples, though the computation will be significantly more complicated. 

Our model and results themselves do not involve mutation, but they have direct implications to inference methods for genetic data. We demonstrate this by applying our results to obtain exact asymptotics for the variance of the Tajima's estimator for the population mutation rate under the infinite site model, following the approach in \cite{king2018non}. 
Our results may also have implications to linkage disequilibrium (LD), since the connection between LD and correlations of genealogies is understood \citep{mcvean2002genealogical}.

\subsection{Discrepancies for small values of \texorpdfstring{$N$}{TEXT}}

Our asymptotic formulas for the correlation coefficients require $N\to\infty$ and do not tell us how much they deviate from the corresponding correlation coefficients for finite $N$.

To assess such deviation, or the discrepancy between the two, we plot both of them as functions of $N$, against simulations, in each of Figures \ref{S:bias1} to \ref{S:bias8}. In all of Figures \ref{S:bias1} to \ref{S:bias8}, the number of independent trials is $M=50000$ for the discrete simulation, and  $s_N=0.95$ and $r_N=0.1$.



These plots demonstrate that a significant discrepancy can occur for small values of $N$; see for instance Figure \ref{S:bias2}. 
They also confirm that as $N$ gets larger, this discrepancy between the discrete simulations and the asymptotics formula gets smaller, providing additional confirmation of our analytical results.
They are also helpful for choosing the smallest value of $N$ to decrease runtime for the simulations in section \ref{S:q12sims}.
Increasing $N$ and the number of trials $M$ will decrease this discrepancy.

Note that  our analytical formula overestimate the corresponding simulated value for small $N$ in 
 Figures \ref{S:bias4} and \ref{S:bias4}, but 
 underestimate the corresponding simulated value in Figures
 \ref{S:bias1}, \ref{S:bias2}, \ref{S:bias5}, \ref{S:bias6}, \ref{S:bias7}, and \ref{S:bias8}. 
We do not have an explanation or intuition about why some of them are overestimations while other are underestimations. A possible explanation is that,  for smaller $N$, the lower order term (in $N$) of the asymptotic expressions of the correlation may not be ignored, and that these lower order term can be positive or negative. This can be a question for future investigation. 


\FloatBarrier

\begin{figure}\label{S:diffN}
    \captionsetup{justification=centering}
     \centering
     \hspace*{\fill}
     \begin{subfigure}[b] {0.35\textwidth}
         \centering       \includegraphics[width=\textwidth]{ 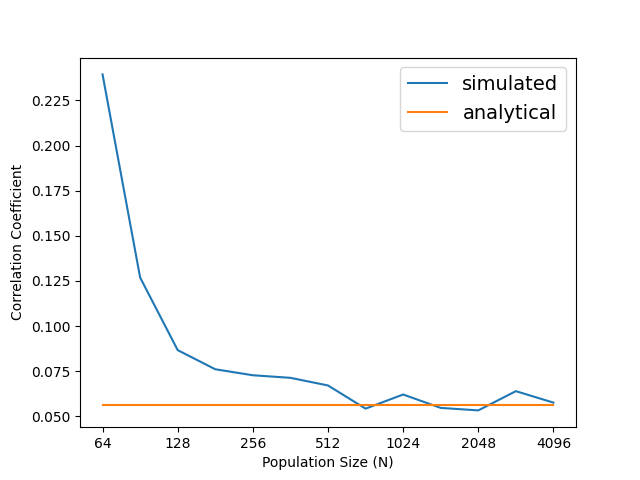}    
         \caption{Scenario $(i)$ state $q_{11}$ with $r_N=5/N$ and $s_N=50/N$} \label{S:bias1}
     \end{subfigure}
     \hfill
     \begin{subfigure}[b]{0.35\textwidth}
         \centering         \includegraphics[width=\textwidth]{ 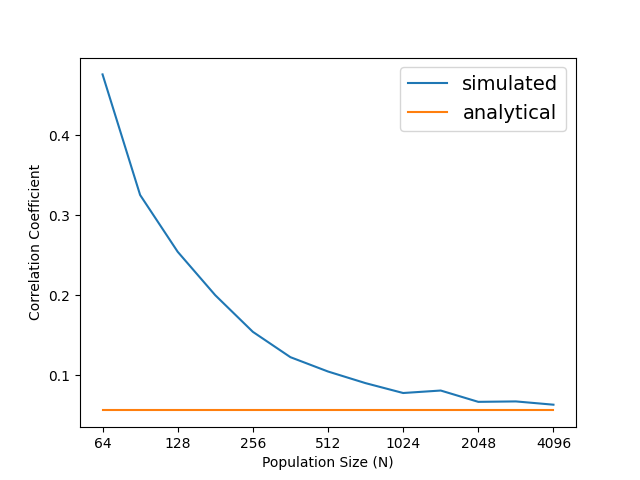}
         \caption{Scenario $(i)$ state $q_{12}$ with $r_N=5/N$ and $s_N=50/N$} \label{S:bias2}
     \end{subfigure}
     \centering
     \hspace*{\fill}

     \hspace*{\fill}
     \begin{subfigure}[b]{0.35\textwidth}
         \centering   \includegraphics[width=\textwidth]{ 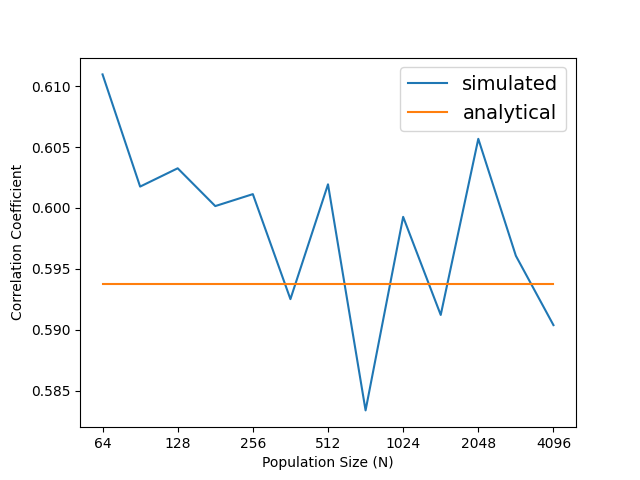}
         \caption{Scenario $(i)$ state $q_{11}$ with $r_N=5/N$ and $s_N=0.95$} \label{S:bias3}
     \end{subfigure}
     \hfill
     \begin{subfigure}[b]{0.35\textwidth}
         \centering       \includegraphics[width=\textwidth]{ 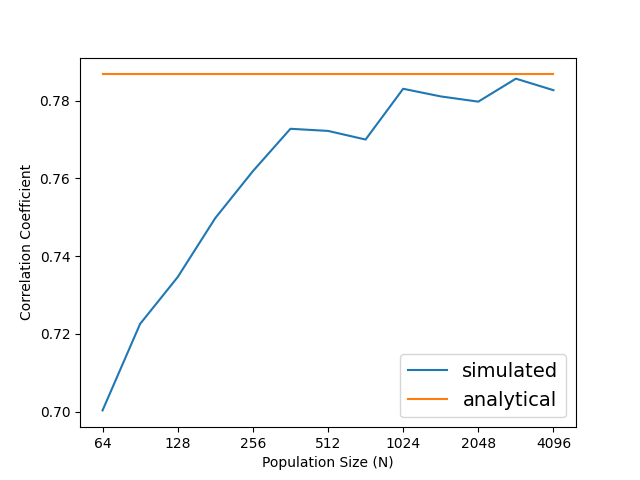}
         \caption{Scenario $(i)$ state $q_{12}$ with $r_N=5/N$ and $s_N=0.95$} \label{S:bias4}
     \end{subfigure}
     \hspace*{\fill}
     
     \hspace*{\fill}
     \begin{subfigure}[b]{0.35\textwidth}
         \centering   \includegraphics[width=\textwidth]{ 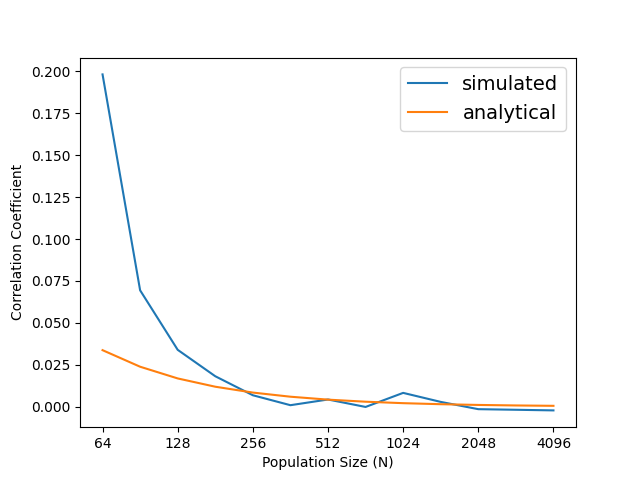}
         \caption{Scenario $(iii)$ state $q_{11}$ with $r_N=0.1$ and $s_N=50/N$} \label{S:bias5}
     \end{subfigure}
     \hfill
     \begin{subfigure}[b]{0.35\textwidth}
         \centering         \includegraphics[width=\textwidth]{ 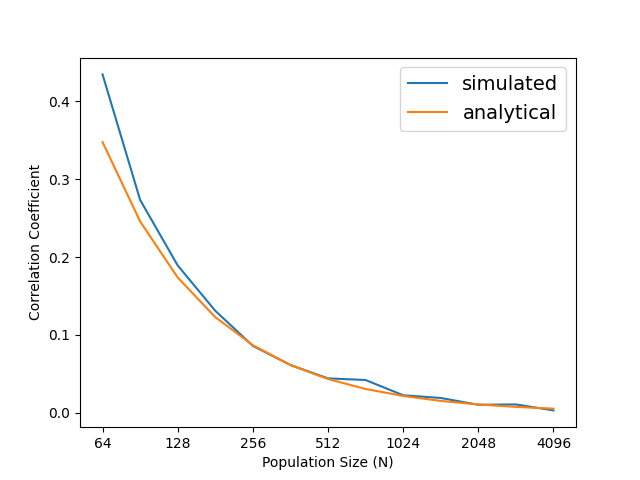}
         \caption{Scenario $(iii)$ state $q_{12}$ with $r_N=0.1$ and $s_N=50/N$} \label{S:bias6}
     \end{subfigure}
     \hspace*{\fill}

     \hspace*{\fill}
     \begin{subfigure}[b]{0.35\textwidth}
         \centering   
         \includegraphics[width=\textwidth]{ 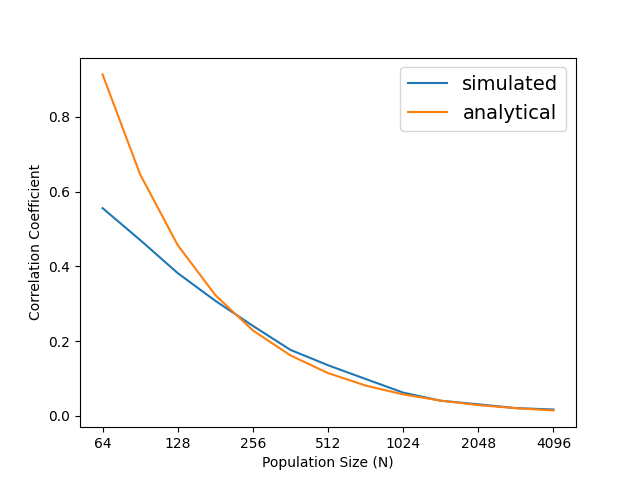}
         \caption{Scenario $(iv)$ state $q_{11}$ with $r_N=0.1$ and $s_N=0.95$} \label{S:bias7}
     \end{subfigure}
     \hfill
     \begin{subfigure}[b]{0.35\textwidth}
         \centering         \includegraphics[width=\textwidth]{ 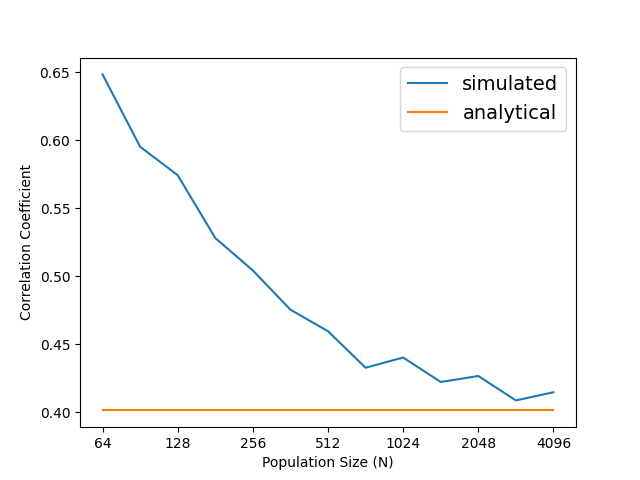}
         \caption{Scenario $(iv)$ state $q_{12}$ with $r_N=0.1$ and $s_N=0.95$} \label{S:bias8}
     \end{subfigure}
     \hspace*{\fill}

     \caption{Correlation coefficient as a function of population size $N$}
\end{figure}

\FloatBarrier

\subsection{Computational complexity}
Each plotted point in  simulations for Figures \ref{S:q12simulationsa}-\ref{S:q12simulationsh}, \ref{S:q11simulationsa}-\ref{S:q11simulationsh}, and \ref{S:bias1}-\ref{S:bias8} has complexity  $O(N M)$ in expectation, where $M$ is the number of repeated independent trials in the Monte Carlo method (i.e. the number of times we run Algorithm \ref{A:algorithm}), and $N$ is the population size. We said ``in expectation" because the  complexity of Algorithm \ref{A:algorithm} is  random.
Algorithm \ref{A:algorithm} randomly simulates the ancestral process of two loci until both pairs coalesce by selecting states corresponding to the previous generation according to the explicit transition probabilities in matrix $\bf \Pi_N^*$ defined in Section \ref{S:simulations}. Hence, the complexity of this algorithm is of the order $O(\max\{T_i, T_j\})$, which is random. Note that  $T_i \leq\max\{T_i, T_j\}\leq T_i+T_j$.
Hence,  by Lemma \ref{L:ETi}, the complexity of a single execution of Algorithm \ref{A:algorithm} is $O(N)$ in expectation.

Each plotted point in simulations \ref{S:q12simulationsa}-\ref{S:q12simulationsh}, \ref{S:q11simulationsa}-\ref{S:q11simulationsh}, and \ref{S:bias1}-\ref{S:bias8} is the average of $M$ executions of Algorithm \ref{A:algorithm} for fixed initial parameters $N$, $s_N$, $r_N$. Each plot has $\approx 50$ plotted points for a total expected complexity of $O(50 M N)$.

The standard error of the Pearson correlation prediction for a single point plotted in Figures \ref{S:q12simulationsa}-\ref{S:q12simulationsh} and Figures \ref{S:q11simulationsa}-\ref{S:q11simulationsh} is $O(M^{-1/2})$. Thus, to achieve a standard error $O(N^{-1})$, one must increase the number of trials $M$ by $O(N^2)$. Figures \ref{S:q12simulationsa}-\ref{S:q12simulationsh} and \ref{S:q11simulationsa}-\ref{S:q11simulationsh} display more variation when the correlation coefficient is of order $O(N^{-1})$ than when the correlation coefficient is of order $O(1)$, because we need approximately $O(N^2)$ more trials to produce standard error of  $O(N^{-1})$. The cases when this happens are  scenario 
 $(iii)$ for states $q_{11}$ and $q_{12}$, and for scenario $(iv)$ for state $q_{11}$.

\section*{Acknowledgements}
This work was supported by the National Science Foundation grant DMS-2051032, when David Kogan was an REU student at Indiana University during summer 2022. It is also supported by the NSF grant DMS-2152103, and the Office of Naval Research grant N00014-20-1-2411 to Wai-Tong (Louis) Fan.



\appendix
\section{States  \texorpdfstring{$\mathcal S=\{q_i\}_{i=1}^{12}$}{TEXT}}\label{A:States}
Here we list all possible states for the two pairs of gene copies. Following \cite{king2018non} we obtain $12$ non-coalescent states $\{q_i\}_{i=1}^{12}$. We do not keep track of the exact labels of the gene copies, but only whether they are in the same individuals and whether coalescence occurred. Adopting the notation in  \cite[Appendix]{king2018non} and supplementing with a pictorial representation, we list these 12 states below:  
\begin{align*}
q_1 &= \{1, 1, 2, 2\} &&=
    \begin{pmatrix}
        {\color{red} \bullet } &  {\color{white} \bullet }
        \\[-0.2cm]
         {\color{white} \blacktriangle }  & {\color{white} \blacktriangle } 
    \end{pmatrix}
    \begin{pmatrix}
     {\color{red} \bullet } & {\color{white} \bullet }
        \\[-0.2cm]
         {\color{white} \blacktriangle } &  {\color{white} \blacktriangle }
    \end{pmatrix}
    \begin{pmatrix}
        &   
        \\[-0.2cm]
        {\color{OliveGreen} \blacktriangle } &  {\color{white} \blacktriangle }
    \end{pmatrix}
    \begin{pmatrix}
        &   
        \\[-0.2cm]
        {\color{OliveGreen} \blacktriangle } &  {\color{white} \blacktriangle }
    \end{pmatrix}
\\
q_2 &=\{(1, 1), 2, 2\}&&=
    \begin{pmatrix}
        {\color{red} \bullet }&  {\color{red} \bullet }
        \\[-0.2cm]
        {\color{white} \blacktriangle }   &  {\color{white} \blacktriangle }
    \end{pmatrix}
    \begin{pmatrix}
        &  
        \\[-0.2cm]
         {\color{OliveGreen} \blacktriangle } &  {\color{white} \blacktriangle }
    \end{pmatrix}
    \begin{pmatrix}
        &   
        \\[-0.2cm]
         {\color{OliveGreen} \blacktriangle } & {\color{white} \blacktriangle }
    \end{pmatrix}
\\
q_3 &= \{1, 1, (2, 2)\}&&=
    \begin{pmatrix}
        {\color{red} \bullet } & 
        \\[-0.2cm]
        {\color{white} \blacktriangle } &  {\color{white} \blacktriangle }
    \end{pmatrix}
    \begin{pmatrix}
          {\color{red} \bullet } &
        \\[-0.2cm]
        {\color{white} \blacktriangle }  & {\color{white} \blacktriangle }
    \end{pmatrix}
    \begin{pmatrix}
         &   
        \\[-0.2cm]
         {\color{OliveGreen} \blacktriangle } & {\color{OliveGreen} \blacktriangle }
    \end{pmatrix}
\\
q_4&=\{(1, 2), 1, 2\}&& =
    \begin{pmatrix}
        {\color{red} \bullet }&  
        \\[-0.2cm]
        {\color{white} \blacktriangle }  & {\color{OliveGreen} \blacktriangle }
    \end{pmatrix}
    \begin{pmatrix}
        {\color{red} \bullet } & {\color{white} \bullet }  
        \\[-0.2cm]
        {\color{white} \blacktriangle }  & {\color{white} \blacktriangle }
    \end{pmatrix}
    \begin{pmatrix}
        &    
        \\[-0.2cm]
         {\color{OliveGreen} \blacktriangle } & {\color{white} \blacktriangle }
    \end{pmatrix}
\\
q_5 &= \{(1, 1), (2, 2)\}&&=
    \begin{pmatrix}
        {\color{red} \bullet }&  {\color{red} \bullet }
        \\[-0.2cm]
        {\color{white} \blacktriangle }  &  {\color{white} \blacktriangle }
    \end{pmatrix}
    \begin{pmatrix}
         &   
        \\[-0.2cm]
         {\color{OliveGreen} \blacktriangle } & {\color{OliveGreen} \blacktriangle }
    \end{pmatrix}
\\
q_6 &=  \{(1, 2), (1, 2)\}&&=
   \begin{pmatrix}
        {\color{red} \bullet }&   
        \\[-0.2cm]
        {\color{white} \blacktriangle }  & {\color{OliveGreen} \blacktriangle }
    \end{pmatrix}
    \begin{pmatrix}
        {\color{red} \bullet }&   
        \\[-0.2cm]
        {\color{white} \blacktriangle }  & {\color{OliveGreen} \blacktriangle }
    \end{pmatrix}
\\
q_7 &=  \{12, 1, 2\}&&=
\begin{pmatrix}
        {\color{red} \bullet }&   
        \\[-0.2cm]
         {\color{OliveGreen} \blacktriangle } &  {\color{white} \blacktriangle }
    \end{pmatrix}
    \begin{pmatrix}
        {\color{red} \bullet }&  {\color{white} \bullet }  
        \\[-0.2cm]
        {\color{white} \blacktriangle }  &  {\color{white} \blacktriangle }
    \end{pmatrix}
    \begin{pmatrix}
       &   
        \\[-0.2cm]
          {\color{OliveGreen} \blacktriangle } & {\color{white} \blacktriangle }
    \end{pmatrix}
\\
q_8 &= \{(12, 1), 2\}&&=
   \begin{pmatrix}
        {\color{red} \bullet}&  {\color{red} \bullet}
        \\[-0.2cm]
         {\color{OliveGreen} \blacktriangle} &  {\color{white} \blacktriangle }
    \end{pmatrix}
    \begin{pmatrix}
        &   
        \\[-0.2cm]
         {\color{OliveGreen} \blacktriangle } & {\color{white} \blacktriangle }
    \end{pmatrix}
\\
q_9 &=  \{(12, 2), 1\}&&=
   \begin{pmatrix}
        {\color{red} \bullet }&   
        \\[-0.2cm]
         {\color{OliveGreen} \blacktriangle } & {\color{OliveGreen} \blacktriangle }
    \end{pmatrix}
    \begin{pmatrix}
        {\color{red} \bullet }& {\color{white} \bullet }   
        \\[-0.2cm]
        {\color{white} \blacktriangle }  &  {\color{white} \blacktriangle }
    \end{pmatrix}
\\
q_{10} &= \{12, (1, 2)\}&&=
    \begin{pmatrix}
        {\color{red} \bullet }&   
        \\[-0.2cm]
         {\color{OliveGreen} \blacktriangle } & {\color{white} \blacktriangle } 
    \end{pmatrix}
    \begin{pmatrix}
        {\color{red} \bullet }&  
        \\[-0.2cm]
        {\color{white} \blacktriangle } & {\color{OliveGreen} \blacktriangle }
    \end{pmatrix}
\\
q_{11} &=  \{12, 12\}&&=
    \begin{pmatrix}
        {\color{red} \bullet }&   
        \\[-0.2cm]
         {\color{OliveGreen} \blacktriangle } & {\color{white} \blacktriangle }  
    \end{pmatrix}
    \begin{pmatrix}
          {\color{red} \bullet } &
        \\[-0.2cm]
           {\color{OliveGreen} \blacktriangle } & {\color{white} \blacktriangle }
    \end{pmatrix}
\\
q_{12} &= \{(12, 12)\}&&=
 \begin{pmatrix}
        {\color{red} \bullet }&  {\color{red} \bullet}
        \\[-0.2cm]
         {\color{OliveGreen} \blacktriangle } & {\color{OliveGreen} \blacktriangle }
    \end{pmatrix}.
\end{align*}
For each of the above states, ${\color{red} \bullet }$ and ${\color{OliveGreen} \blacktriangle }$ represent sampled gene copies at locus $i$ and $j$ respectively.  Big parenthesis $``\left(\right)"$ are used to denote an individual. Each column represents a chromosome with two loci, so that each row represents a locus.
 The  chromosomes are not labeled, hence there is no distinguishing between left and right for the two chromosomes within an individual. For example, the following equivalences hold
$$ \begin{pmatrix}
        {\color{red} \bullet }& {\color{white} \bullet }  
        \\[-0.2cm]
           &  
    \end{pmatrix}
    =
    \begin{pmatrix}
       {\color{white} \bullet } & {\color{red} \bullet }   
        \\[-0.2cm]
       {\color{white} \blacktriangle }    &  {\color{white} \blacktriangle }
    \end{pmatrix},\quad 
 \begin{pmatrix}
          {\color{red} \bullet } & {\color{white} \bullet }
        \\[-0.2cm]
           {\color{OliveGreen} \blacktriangle } & {\color{white} \blacktriangle }
    \end{pmatrix}
    =
 \begin{pmatrix}
         {\color{white} \bullet } &  {\color{red} \bullet } 
        \\[-0.2cm]
        {\color{white} \blacktriangle } &  {\color{OliveGreen} \blacktriangle } 
    \end{pmatrix} ,\quad   
    \begin{pmatrix}
        {\color{red} \bullet }&  
        \\[-0.2cm]
        {\color{white} \blacktriangle } & {\color{OliveGreen} \blacktriangle }
    \end{pmatrix}
    =
    \begin{pmatrix}
        &{\color{red} \bullet } 
        \\[-0.2cm]
          {\color{OliveGreen} \blacktriangle }&{\color{white} \blacktriangle }
    \end{pmatrix},
$$
and so we used only one of each in the above list of the 12 states.

We also denote by $q_0$ the state in which coalescence occurred in one or both loci. This state $q_0$ is an absorbing state of the Markov chain for the coalescence process of the two pairs of gene copies.

\section{The \texorpdfstring{$13\times13$}{TEXT} transition matrix \texorpdfstring{${\bf \Pi}_N$}{TEXT}}\label{A:Matrix}


The dynamics of the two pairs of lineages at the two loci can be described by a discrete-time Markov chain with  13 states $\mathcal S \cup\{q_0\}$ and the 
$13\times 13$ one-step transition matrix ${\bf \Pi}_N$ below. 

We now explain how the matrix is computed from the Wright-Fisher model through a few specific examples. 
\begin{example} \label{ex:p2_2}\rm  The transition probability from $q_2=\{(1, 1), 2, 2\}$ to $q_2$ is obtained as follows. We note that the two loci $i$ (${\color{red} \bullet}$ in \ref{A:States}) must stay in the same individual but not coalesce yet, which happens with probability $s_N/2$. The two loci $j$ (${\color{OliveGreen} \blacktriangle}$ in \ref{A:States}) must belong to two different individuals that are different from the individual chosen by loci $i$, which has probability $\frac{N-1}{N}\frac{N-2}{N}$.
Thus, we have that the transition probability from $q_2$ to $q_2$ is $\frac{s_N(N-1)(N-2)}{2N^2}$.
\end{example}

\begin{example} \label{ex:p2_3}\rm  The transition probability from $q_2=\{(1, 1), 2, 2\}$ to $q_3
=\{1, 1, (2, 2)\}$ is obtained as follows. We note that the two loci $i$ (${\color{red} \bullet}$ in \ref{A:States}) must end up in different individuals, which happens with probability $1-s_N$. The two loci $j$ (${\color{OliveGreen} \blacktriangle}$ in \ref{A:States}) must end up in same individual that is different from the two individuals with loci $i$, but not coalesce. The probability the two loci $j$ end up in the same individual and do not coalesce is $\frac{1}{2N}$. Additionally, the probability that this individual is different from the two individuals containing the loci $i$ is $1-\frac{2}{N}$. This is because the two loci $i$ cannot end up in the same individual as they did not come from selfing, and we have that $(1-\frac{1}{N})\cdot\frac{N-2}{N-1}=1-\frac{2}{N}$. Thus the transition probability from state $q_2$ to $q_3$ is $(1-s_N)(1-\frac{2}{N})\cdot \frac{1}{2N}= \frac{(1-s_N)(N-2)}{2N^2}$.
\end{example}

\begin{example}\label{ex:p6_4}\rm  The transition probability from $q_6=\{(1, 2), (1, 2)\}$ to $q_4
=\{(1, 2), 1, 2)\}$ is obtained as follows. Denote the two individuals in state $q_6$ by $A$ and $B$. There are two different cases. The first case is when one of the two individuals in state $q_6$ undergoes selfing, and the other doesn't undergo selfing. The second case is when both $A$ and $B$ do not undergo selfing. We can ignore the case when both undergo selfing because this will result in loci in at most two individuals, but state $q_4$ has loci in three individuals. 

For the first case the probability that $A$ undergoes selfing, and $B$ does not is $s_N(1-s_N)$. The probability that the two loci in individual $A$ do not end up on the same chromosome is $1/2$. The two loci from $B$ must not end up in the same individual as the loci from $A$. As in Example \ref{ex:p2_3} the probability of this is $(1-\frac{1}{N})\cdot\frac{N-2}{N-1}=\frac{N-2}{N}$. We have the same probability of transitioning to state $q_4$ from $q_6$ if $B$ undergoes selfing, and $A$ doesn't. Thus the probability of the first case is $2\cdot \frac{1}{2}\cdot s_N(1-s_N)\cdot\frac{N-2}{N} = \frac{s_N(1-s_N)(N-2)}{N}$

Now consider the second case. Both $A$ and $B$ do not undergo selfing with probability $(1-s_N)^2$. Now we need to select one locus $i$ and one locus $j$ to end up in the same individual. Note that only two such pairs exists because we need to pick locus $i$ and $j$ from different individuals (since no selfing). The probability that given a locus $i$ and locus a $j$ pair they end up in the same individual on different chromosomes as in $q_4$ is $\frac{1}{2N}$. Note the other two loci cannot end up in the same individual as the one with both a locus $i$ and locus $j$ since both individuals in $q_6$ did not undergo selfing. Still, the other two loci must not end up in the same individual. This happens with probability $\frac{N-2}{N-1}$. Thus the total probability of the second case is $2\cdot \frac{1}{2N}\cdot (1-s_N)^2\cdot \frac{N-2}{N-1}$. Thus the total transition probability from $q_6$ to $q_4$ is $\frac{s_N(1-s_N)(N-2)}{N}+\frac{(1-s_N)^2(N-2)}{N(N-1)
}$.

\end{example}

\FloatBarrier
\begin{table}[ht]\rm \label{M:transition_matrix}
\renewcommand{\arraystretch}{1.5}
\centering
\scalebox{0.75}{
\begin{tabular}{|c| c c c c c c|}
\hline
     & $q_1$ & $q_2$ & $q_3$ & $q_4$ & $q_5$ & $q_6$ \\
     \hline
          \\
     $q_1$ & $\frac{(N-1)(N-2)(N-3)}{N^3}$ & $\frac{(N-1)(N-2)}{2N^3}$ & $\frac{(N-1)(N-2)}{2N^3}$ & $\frac{2(N-1)(N-2)}{N^3}$ & $\frac{N-1}{4N^3}$ & $\frac{N-1}{2N^3}$ 
          \\
          \\
     $q_2$ & $\frac{(1 - s_N)(N-2)(N-3)}{N^2}$ & $\frac{s_N(N-1)(N-2)}{2N^2}$ & $\frac{(1 - s_N)(N-2)}{2N^2}$ & $\frac{2(1-s_N)(N-2)}{N^2}$ & $\frac{s_N(N-1)}{4N^2}$ & $\frac{(1-s_N)}{2N^2}$
          \\
          \\
     $q_3$ & $\frac{(1 - s_N)(N-2)(N-3)}{N^2}$ & $\frac{(1 - s_N)(N-2)}{2N^2}$ & $\frac{s_N(N-1)(N-2)}{2N^2}$ & $\frac{2(1-s_N)(N-2)}{N^2}$ & $\frac{s_N(N-1)}{4N^2}$ & $\frac{(1-s_N)}{2N^2}$
     \\
          \\
     $q_4$ & $\frac{(1-s_N)(N-2)(N-3)}{N^2}$ & $\frac{(1-s_N)(N-2)}{2N^2}$ & $\frac{(1-s_N)(N-2)}{2N^2}$ & $\frac{s_N(N-1)(N-2)}{2N^2} + \frac{3(1-s_N)(N-2)}{2N^2}$ & $\frac{1-s_N}{4N^2}$ & $\frac{1-s_N}{4N^2}+\frac{s_N(N-1)}{4N^2}$
     \\
          \\
     $q_5$ & $\frac{(1-s_N)^2(N-2)(N-3)}{N(N-1)}$ & $\frac{s_N(1-s_N)(N-2)}{2N}$ & $\frac{s_N(1-s_N)(N-2)}{2N}$ & $\frac{2(1-s_N)^2(N-2)}{N(N-1)}$ & $\frac{s_N^2(N-1)}{4N}$ & $\frac{(1-s_N)^2}{2N(N-1)}$
          \\
     \\
     $q_6$ & $\frac{(1-s_N)^2(N-2)(N-3)}{N(N-1)}$ & $\frac{(1-s_N)^2(N-2)}{2N(N-1)}$ & $\frac{(1-s_N)^2(N-2)}{2N(N-1)}$ & $\frac{s_N(1-s_N)(N-2)}{N} + \frac{(1-s_N)^2(N-2)}{N(N-1)}$ & $\frac{(1-s_N)^2}{4N(N-1)}$ & $\frac{(1-s_N)^2}{4N(N-1)}+\frac{s_N^2(N-1)}{4N}$
     \\
          \\
     $q_7$ & 0 & 0 & 0 & $\frac{r_N(N-1)(N-2)}{N^2}$ & 0 & $\frac{r_N}{2N}(1-\frac{1}{N})$
     \\
          \\
     $q_8$ & 0 & 0 & 0 & $\frac{r_N(1-s_N)(N-2)}{N}$ & 0 & $\frac{r_N(1-s_N)}{2N}$
     \\
          \\
     $q_9$ & 0 & 0 & 0 & $\frac{r_N(1-s_N)(N-2)}{N}$ & 0 & $\frac{r_N(1-s_N)}{2N}$
     \\
          \\
     $q_{10}$ & 0 & 0 & 0 & $\frac{r_N(1-s_N)(N-2)}{N}$ & 0 & $\frac{r_Ns_N(N-1)}{2N}$
     \\
          \\
     $q_{11}$ & 0 & 0 & 0 & 0 & 0 & $\frac{r_N^2(N-1)}{N}$
     \\
          \\
     $q_{12}$ & 0 & 0 & 0 & 0 & 0 & $r_N^2(1-s_N)$
     \\
          \\
     $q_{0}$ & 0 & 0 & 0 & 0 & 0 & 0
     \\
     \hline
\end{tabular}
}
\end{table}


\begin{table}[ht]
\renewcommand{\arraystretch}{1.5}
\begin{center}
\scalebox{0.65}{
\begin{tabular} {|c| c c c c c c c|}
\hline
     & $q_7$ & $q_8$ & $q_9$ & $q_{10}$ & $q_{11}$ & $q_{12}$ & $s_0$ 
     \\
     \hline
          \\
     $q_1$ & $\frac{2(N-1)(N-2)}{N^3}$ & $\frac{N-1}{N^3}$ & $\frac{N-1}{N^3}$ & $\frac{N-1}{N^3}$ & $\frac{N-1}{2N^3}$ & $\frac{1}{4N^3}$ & $\frac{1}{N}-\frac{1}{4N^2}$
          \\
          \\
     $q_2$ & $\frac{2(1-s_N)(N-2)}{N^2}$ & $\frac{s_N(N-1)}{N^2}$ & $\frac{(1-s_N)}{N^2}$ & $\frac{(1-s_N)}{N^2}$ & $\frac{(1-s_N)}{2N^2}$ & $\frac{s_N}{4N^2}$ & $\frac{s_N}{2}+\frac{2-s_N}{4N}$
          \\
     \\
     $q_3$ & $\frac{2(1-s_N)(N-2)}{N^2}$ & $\frac{(1-s_N)}{N^2}$ & $\frac{s_N(N-1)}{N^2}$ & $\frac{(1-s_N)}{2N^2}$ & $\frac{(1-s_N)}{N^2}$ & $\frac{s_N}{4N^2}$ & $\frac{s_N}{2}+\frac{2-s_N}{4N}$
          \\
     \\
     $q_4$ & $\frac{s_N(N-1)(N-2)}{2N^2} + \frac{3(1-s_N)(N-2)}{2N^2}$ & $\frac{s_N(N-1)}{2N^2}+\frac{(1-s_N)}{2N^2}$ & $\frac{s_N(N-1)}{2N^2}+\frac{(1-s_N)}{2N^2}$ & $\frac{s_N(N-1)}{2N^2}+\frac{(1-s_N)}{2N^2}$ & $\frac{s_N(N-1)}{4N^2}+\frac{(1-s_N)}{4N^2}$ & $\frac{s_N}{4N^2}$ & $\frac{1}{N}-\frac{1}{4N^2}$
     \\
          \\
     $q_5$ & $\frac{2(1-s_N)^2(N-2)}{N(N-1)}$ & $\frac{s_N(1-s_N)}{N}$ & $\frac{s_N(1-s_N)}{N}$ & $\frac{(1-s_N)^2}{N(N-1)}$ & $\frac{(1-s_N)^2}{2N(N-1)}$ & $\frac{s_N^2}{4N}$ & $s_N-\frac{s_N^2}{4}$
     \\
          \\
     $q_6$ & $\frac{s_N(1-s_N)(N-2)}{N}+ \frac{(1-s_N)^2(N-2)}{N(N-1)}$ & $\frac{s_N(1-s_N)}{N}$ & $\frac{s_N(1-s_N)}{N}$ & $\frac{s_N^2(N-1)}{2N}+\frac{(1-s_N)^2}{2N(N-1)}$ & $\frac{s_N^2(N-1)}{4N}+\frac{(1-s_N)^2}{4N(N-1)}$ & $\frac{s_N^2}{4N}$ & $\frac{1}{N}-\frac{s_N^2}{4N}-\frac{(1-s_N)^2}{4N(N-1)}$
     \\
          \\
     $q_7$ & $\frac{(1-r_N)(N-1)(N-2)}{N^2}$ & $\frac{N-1}{2N^2}$ & $\frac{N-1}{2N^2}$ & $\frac{N-1}{2N^2}$ & $\frac{(1-r)(N-1)}{2N^2}$ & $\frac{1}{4N^2}$ & $\frac{1}{N}-\frac{1}{4N^2}$
     \\
          \\
     $q_8$ & $\frac{(1-r_N)(1-s_N)(N-2)}{N}$ & $\frac{s_N(N-1)}{2N}$ & $\frac{1-s_N}{2N}$ & $\frac{1-s_N}{2N}$ & $\frac{(1-r_N)(1-s_N)}{2N}$ & $\frac{s_N}{4N}$ & $\frac{s_N}{2}+\frac{2-s_N}{4N}$
     \\
          \\
     $q_9$ & $\frac{(1-r_N)(1-s_N)(N-2)}{N}$ & $\frac{1-s_N}{2N}$ & $\frac{s_N(N-1)}{2N}$ & $\frac{1-s_N}{2N}$ & $\frac{(1-r_N)(1-s_N)}{2N}$ & $\frac{s_N}{4N}$ & $\frac{s_N}{2}+\frac{2-s_N}{4N}$
     \\
          \\
     $q_{10}$ & $\frac{(1-r_N)(1-s_N)(N-2)}{N}$ & $\frac{1-s_N}{2N}$ & $\frac{1-s_N}{2N}$ & $\frac{s_N(N-1)}{2N}$ & $\frac{(1-r_N)s_N(N-1)}{2N}$ & $\frac{s_N}{4N}$ & $\frac{4-s_N}{4N}$
     \\
          \\
     $q_{11}$ & 0 & 0 & 0 & $\frac{2r_N(1-r_N)(N-1)}{N}$ & $\frac{(1-r_N)^2(N-1)}{N}$ & $\frac{(1-r_N)^2+r_N^2}{2N}$ & $\frac{-2r_N^2+2r_N+1}{2N}$
     \\
          \\
     $q_{12}$ & 0 & 0 & 0 & $2r_N(1-r_N)(1-s_N)$ & $(1-r_N)^2(1-s_N)$ & $\frac{((1-r_N)^2+r_N^2)s_N}{2}$ & $\frac{s_N(-2r_N^2+2r_N+1)}{2}$
     \\
          \\
     $q_0$ & 0 & 0 & 0 & 0 & 0 & 0 & 1
     \\
\hline
\end{tabular}
}
\end{center}
\end{table}
\FloatBarrier

We note that the rows for the states $q_2, q_3$ and $q_8, q_9$ are almost the same. This is because for all other states the loci are interchangable meaning that if we swapped locus $i$ with locus $j$ the state would be the same. On the other hand swapping locus $i$ with locus $j$ in state $q_2$ would result in state $q_3$, and vice-versa (similar for $q_8$ and $q_9$). This means that states $q_2$ and $q_3$ should have the same transition probabilities to all other states except $q_2, q_3, q_8$ and $q_9$. Additionally, the transition probability from $q_2$ to $q_2$ and $q_3$ should equal the transition probability from $q_3$ to $q_3$ and $q_2$ respectively (same argument applies for $q_8$ and $q_9$). This is why the rows for the states $q_2, q_3$ and $q_8, q_9$ are almost the same.

\subsection{Transition matrix  \texorpdfstring{${\bf \Pi}_N$}{TEXT} when \texorpdfstring{$s_N=1$}{TEXT} (total selfing)}

In the case when $s_N=1$, 
this matrix becomes the following. We expect that some of the transition probabilities become $0$. Specifically, once a group of gene copies are found in the same individual, they must remain in the same individual. For example, the state $q_{12}$ can only transition to $q_{12}$ or the coalescent state.

\medskip

\begin{center}
\scalebox{0.85}{
\begin{tabular}{|c|cccccc|}
\hline
& $q_1$ & $q_2$ & $q_3$ & $q_4$ & $q_5$ & $q_6$
\\
\hline
$q_1$ & $\frac{(N-1)(N-2)(N-3)}{N^3}$ & $\frac{(N-1)(N-2)}{2N^3}$ & $\frac{(N-1)(N-2)}{2N^3}$ & $\frac{2(N-1)(N-2)}{N^3}$ & $\frac{N-1}{4N^3}$ & $\frac{N-1}{2N^3}$
\\
$q_2$ & $0$ & $\frac{(N-1)(N-2)}{2N^2}$ & $0$ & $0$ & $\frac{N-1}{4N^2}$ & $0$ 
\\
$q_3$ & $0$ & $0$ & $\frac{(N-1)(N-2)}{2N^2}$ & $0$ & $\frac{N-1}{4N^2}$ & $0$
\\
$q_4$ & $0$ & $0$ & $0$ & $\frac{(N-1)(N-2)}{2N^2}$ & $0$ & $\frac{N-1}{4N^2}$
\\
$q_5$ & $0$ & $0$ & $0$ & $0$ & $\frac{N-1}{4N}$ & $0$
\\
$q_6$ & $0$ & $0$ & $0$ & $0$ & $0$ & $\frac{N-1}{4N}$
\\
$q_7$ & $0$ & $0$ & $0$ & $\frac{r_N(N-1)(N-2)}{N^2}$ & $0$ & $\frac{r_N(N-1)}{2N^2}$
\\
$q_8$ & $0$ & $0$ & $0$ & $0$ & $0$ & $0$
\\
$q_9$ & $0$ & $0$ & $0$ & $0$ & $0$ & $0$
\\
$q_{10}$ & $0$ & $0$ & $0$ & $0$ & $0$ & $\frac{r_N(N-1)}{2N}$
\\
$q_{11}$ & $0$ & $0$ & $0$ & $0$ & $0$ & $\frac{r_N^2(N-1)}{N^2}$
\\
$q_{12}$ & $0$ & $0$ & $0$ & $0$ & $0$ & $0$
\\
$q_0$ & $0$ & $0$ & $0$ & $0$ & $0$ & $0$
\\
\hline
\end{tabular}
}
\end{center}

\medskip

\begin{center}
\scalebox{0.85}{
\begin{tabular}{|c|ccccccc|}
\hline
& $q_7$ & $q_8$ & $q_9$ & $q_{10}$ & $q_{11}$ & $q_{12}$ & $q_0$
\\
\hline
$q_1$ & $\frac{2(N-1)(N-2)}{N^3}$ & $\frac{N-1}{N^3}$ & $\frac{N-1}{N^3}$ & $\frac{N-1}{N^3}$ & $\frac{N-1}{2N^3}$ & $\frac{1}{4N^3}$ & $\frac{4N-1}{4N^2}$
\\
$q_2$ & $0$ & $\frac{N-1}{N^2}$ & $0$ & $0$ & $0$ & $\frac{1}{4N^2}$ & $\frac{2N+1}{4N}$
\\
$q_3$ & $0$ & $0$ & $\frac{N-1}{N^2}$ & $0$ & $0$ & $\frac{1}{4N^2}$ & $\frac{2N+1}{4N}$
\\
$q_4$ & $\frac{(N-2)(N-1)}{2N^2}$ & $\frac{N-1}{2N^2}$ & $\frac{N-1}{2N^2}$ & $\frac{N-1}{2N^2}$ & $\frac{N-1}{4N^2}$ & $\frac{1}{4N^2}$ & $\frac{4N-1}{4N^2}$
\\
$q_5$ & $0$ & $0$ & $0$ & $0$ & $0$ & $\frac{1}{4N}$ & $\frac{3}{4}$
\\
$q_6$ & $0$ & $0$ & $0$ & $\frac{N-1}{2N}$ & $\frac{N-1}{4N}$ & $\frac{1}{4N}$ & $\frac{3}{4N}$
\\
$q_7$ & $\frac{(N-1)(N-2)(1-r_N)}{N^2}$ & $\frac{N-1}{2N^2}$ & $\frac{N-1}{2N^2}$ & $\frac{N-1}{2N^2}$ & $\frac{(N-1)(1-r_N)}{2N^2}$ & $\frac{1}{4N^2 }$ & $\frac{4N-1}{4N^2}$
\\
$q_8$ & $0$ & $\frac{N-1}{2N}$ & $0$ & $0$ & $0$ & $\frac{1}{4N}$ & $\frac{1+2N}{4N}$
\\
$q_9$ & $0$ & $0$ & $\frac{N-1}{2N}$ & $0$ & $0$ & $\frac{1}{4N}$ & $\frac{1+2N}{4N}$
\\
$q_{10}$ & $0$ & $0$ & $0$ & $\frac{N-1}{2N}$ & $\frac{(N-1)(1-r_N)}{2N}$ & $\frac{1}{4N}$ & $\frac{3}{4N}$
\\
$q_{11}$ & $0$ & $0$ & $0$ & $\frac{2r_N(1-r_N)(N-1)}{N}$ & $\frac{(N-1)(1-r_N)^2}{N}$  & $\frac{(1-r_N)^2+r_N^2}{2N}$ & $\frac{1+2r_N-2r_N^2}{2N}$
\\
$q_{12}$ & $0$ & $0$ & $0$ & $0$ & $0$ & $\frac{(1-r_N)^2+r_N^2}{2}$ & $\frac{1+2r_N-2r_N^2}{2}$
\\
$q_0$ & $0$ & $0$ & $0$ & $0$ & $0$ & $0$ & $1$
\\
\hline
\end{tabular}
}
\end{center}

\bigskip

When $N\to\infty$, the above matrix converges entry-wise to the matrix

\medskip

\begin{center}
\scalebox{0.85}{
\begin{tabular}{|c|ccccccccccccc|}
\hline
& $q_1$ & $q_2$ & $q_3$ & $q_4$ & $q_5$ & $q_6$ & $q_7$ & $q_8$ & $q_9$ & $q_{10}$ & $q_{11}$ & $q_{12}$ & $q_0$
\\
\hline
$q_1$ & 1 & 0 & 0 & 0 & 0 & 0 & 0 & 0 & 0 & 0 & 0 & 0 & 0
\\
$q_2$ & $0$ & $\frac{1}{2}$ & $0$ & $0$ & $0$ & $0$ & $0$ & $0$ & $0$ & $0$ & $0$ & $0$ &  $\frac{1}{2}$
\\
$q_3$ & $0$ & $0$ & $\frac{1}{2}$ & $0$ & $0$ & $0$ & $0$ & $0$ & $0$ & $0$ & $0$ & $0$ &  $\frac{1}{2}$
\\
$q_4$ & $0$ & $0$ & $0$ & $\frac{1}{2}$ & $0$ & $0$ & $\frac{1}{2}$ & $0$ & $0$ & $0$ & $0$ & $0$ &  $0$
\\
$q_5$ & $0$ & $0$ & $0$ & $0$ & $\frac{1}{4}$ & $0$ & $0$ & $0$ & $0$ & $0$ & $0$ & $0$ &  $\frac{3}{4}$
\\
$q_6$ & $0$ & $0$ & $0$ & $0$ & $0$ & $\frac{1}{4}$ & $0$ & $0$ & $0$ & $\frac{1}{2}$ & $\frac{1}{4}$ & $0$ &  $0$
\\
$q_7$ & $0$ & $0$ & $0$ & $r_N$ & $0$ & $0$ & $1-r_N$ & $0$ & $0$ & $0$ & $0$ & $0$ & $0$
\\
$q_8$ & $0$ & $0$ & $0$ & $0$ & $0$ & $0$ & $0$ & $\frac{1}{2}$ & $0$ & $0$ & $0$ & $0$ &  $\frac{1}{2}$
\\
$q_9$ & $0$ & $0$ & $0$ & $0$ & $0$ & $0$ & $0$ & $0$ & $\frac{1}{2}$ & $0$ & $0$ & $0$ &  $\frac{1}{2}$
\\
$q_{10}$ & $0$ & $0$ & $0$ & $0$ & $0$ & $\frac{r_N}{2}$ & $0$ & $0$ & $0$ & $\frac{1}{2}$ & $\frac{1-r_N}{2}$ & $0$ &  $0$
\\
$q_{11}$ & $0$ & $0$ & $0$ & $0$ & $0$ & $r_N^2$ & $0$ & $0$ & $0$ & $2r_N(1-r_N)$ & $(1-r_N)^2$ & $0$ &  $0$
\\
$q_{12}$ & $0$ & $0$ & $0$ & $0$ & $0$ & $0$ & $0$ & $0$ & $0$ & $0$ & $0$ & $\frac{(1-r_N)^2+r_N^2}{2}$ &  $\frac{1+2r_N-2r_N^2}{2}$
\\
$q_0$ & $0$ & $0$ & $0$ & $0$ & $0$ & $0$ & $0$ & $0$ & $0$ & $0$ & $0$ & $0$ & $1$
\\
\hline
\end{tabular}
}
\end{center}
\medskip


\subsection{Transition matrix  \texorpdfstring{${\bf \Pi}_N$}{TEXT} when \texorpdfstring{$r_N=0$}{TEXT} (no recombination)}

In the case when $r_N=0$ we expect that some of the transition probabilities become $0$. Specifically, once a group of gene copies from locus $i$ and locus $j$ are found on the same chromosome, they must remain on the same chromosome. 
For example, the transition probability from state $q_{11}$ to state $q_{10}$ must be $0$ (not necessarily vice-versa). 

Consider the transition probability from state $q_1$ to $q_7$. In state $q_1$ there are four chromosomes with loci on them, if a pair of these two chromosomes have the same parent chromosome it is possible we end up in state $q_7$. Specifically, when one of the chromosomes has a locus $i$ and one of the chromosomes has a locus $j$. This happens with probability $4\cdot \frac{1}{2N}=\frac{2}{N}$. 
When $r_N=0$ for all $N\in\mathbb{N}$, this matrix becomes

\medskip

\begin{center}
\scalebox{0.83}{
\begin{tabular}{|c|cccccc|}
\hline
& $q_1$ & $q_2$ & $q_3$ & $q_4$ & $q_5$ & $q_6$
\\
\hline
$q_1$ & $\frac{(N-1)(N-2)(N-3)}{N^3}$ & $\frac{(N-1)(N-2)}{2N^3}$ & $\frac{(N-1)(N-2)}{2N^3}$ & $\frac{2(N-1)(N-2)}{N^3}$ & $\frac{N-1}{4N^3}$ & $\frac{N-1}{2N^3}$
\\
$q_2$ & $\frac{(N-2)(N-3)(1-s_N)}{N^2}$ & $\frac{s_N(N-1)(N-2)}{2N^2}$ & $\frac{(N-2)(1-s_N)}{2N^2}$ & $\frac{2(N-2)(1-s_N)}{N^2}$ & $\frac{s_N(N-1)}{4N^2}$ & $\frac{1-s_N}{2N^2}$ 
\\
$q_3$ & $\frac{(N-2)(N-3)(1-s_N)}{N^2}$ & $\frac{(N-2)(1-s_N)}{2N^2}$ & $\frac{s_N(N-1)(N-2)}{2N^2}$ & $\frac{2(N-2)(1-s_N)}{N^2}$ & $\frac{s_N(N-1)}{4N^2}$ & $\frac{1-s_N}{2N^2}$
\\
$q_4$ & $\frac{(N-2)(N-3)(1-s_N)}{N^2}$ & $\frac{(N-2)(1-s_N)}{2N^2}$ & $\frac{(N-2)(1-s_N)}{2N^2}$ & $\frac{(N-2)(Ns_N-4s_N+3)}{2N^2}$ & $\frac{1-s_N}{4N^2}$ & $\frac{Ns_N-2s_N+1}{4N^2}$ 
\\
$q_5$ & $\frac{(N-2)(N-3)(1-s_N)^2}{N(N-1)}$ & $\frac{s_N(N-2)(1-s_N)}{2N}$ & $\frac{s_N(N-2)(1-s_N)}{2N}$ & $\frac{2(N-2)(1-s_N)^2}{N(N-1)}$ & $\frac{s_N^2(N-1)}{4N}$ & $\frac{(1-s_N)^2}{2N(N-1)}$ 
\\
$q_6$ & $\frac{(N-2)(N-3)(1-s_N)^2}{N(N-1)}$ & $\frac{(N-2)(1-s_N)^2}{2N(N-1)}$ & $\frac{(N-2)(1-s_N)^2}{2N(N-1)}$ & $\frac{(N-2)(1-s_N)(Ns_N-2s_N+1)}{N(N-1)}$ & $\frac{(1-s_N)^2}{4N(N-1)}$ & $\frac{(1-s_N)^2}{2N(2N-2)}+\frac{s_N^2(N-1)}{4N}$ 
\\
$q_7$ & $0$ & $0$ & $0$ & $0$ & $0$ & $0$
\\
$q_8$ & $0$ & $0$ & $0$ & $0$ & $0$ & $0$
\\
$q_9$ & $0$ & $0$ & $0$ & $0$ & $0$ & $0$
\\
$q_{10}$ & $0$ & $0$ & $0$ & $0$ & $0$ & $0$
\\
$q_{11}$ & $0$ & $0$ & $0$ & $0$ & $0$ & $0$
\\
$q_{12}$ & $0$ & $0$ & $0$ & $0$ & $0$ & $0$
\\
$q_0$ & $0$ & $0$ & $0$ & $0$ & $0$ & $0$
\\
\hline
\end{tabular}
}
\end{center}
\medskip

\begin{center}
\scalebox{0.74}{
\begin{tabular}{|c|ccccccc|}
\hline
& $q_7$ & $q_8$ & $q_9$ & $q_{10}$ & $q_{11}$ & $q_{12}$ & $q_0$
\\
\hline
$q_1$  & $\frac{2(N-1)(N-2)}{N^3}$ & $\frac{N-1}{N^3}$ & $\frac{N-1}{N^3}$ & $\frac{N-1}{N^3}$ & $\frac{N-1}{2N^3}$ & $\frac{1}{4N^3}$ & $\frac{4N-1}{4 N^2}$
\\
$q_2$ & $\frac{2(N-2)(1-s_N)}{N^2}$ & $\frac{s_N(N-1)}{N^2}$ & $\frac{1-s_N}{N^2}$ & $\frac{1-s_N}{N^2}$ & $\frac{1-s_N}{2N^2}$ & $\frac{s_N}{4N^2}$ & $\frac{2Ns_N-s_N+2}{4N}$
\\
$q_3$ & $\frac{2(N-2)(1-s_N)}{N^2}$ & $\frac{1-s_N}{N^2}$ & $\frac{s_N(N-1)}{N^2}$ & $\frac{1-s_N}{N^2}$ & $\frac{1-s_N}{2N^2}$ & $\frac{s_N}{4N^2}$ & $\frac{2Ns_N-s_N+2}{4N}$
\\
$q_4$ & $\frac{(N-2)(Ns_N-4s_N+3)}{2N^2}$ & $\frac{Ns_N-2s_N+1}{2N^2}$ & $\frac{Ns_N-2s_N+1}{2N^2}$ & $\frac{Ns_N-2s_N+1}{2N^2}$ & $\frac{Ns_N-2s_N+1}{4N^2}$ & $\frac{s_N}{4N^2}$ & $\frac{4N-1}{4N^2}$
\\
$q_5$ & $\frac{2(N-2)(1-s_N)^2}{N(N-1)}$ & $\frac{s_N(1-s_N)}{N}$ & $\frac{s_N(1-s_N)}{N}$ & $\frac{(1-s_N)^2}{N(N-1)}$ & $\frac{(1-s_N)^2}{2N(N-1)}$ & $\frac{s_N^2}{4N}$ & $\frac{(4-s_N)s_N}{4}$
\\
$q_6$ & $\frac{(N-2)(1-s_N)(Ns_N-2s_N+1)}{N(N-1)}$ & $\frac{s_N(1-s_N)}{N}$ & $\frac{s_N(1-s_N)}{N}$ & $\frac{(1-s_N)^2}{2N(N-1)}+\frac{s_N^2(N-1)}{2N}$ & $\frac{(1-s_N)^2}{2N(2N-2)}+\frac{s_N^2(N-1)}{4N}$ & $\frac{s_N^2}{4N}$ & $\frac{4-s_N^2}{4N}-\frac{(1-s_N)^2}{4N(N-1)}$
\\
$q_7$ & $\frac{(N-1)(N-2)}{N^2}$ & $\frac{N-1}{2N^2}$ & $\frac{N-1}{2N^2}$ & $\frac{N-1}{2N^2}$ & $\frac{N-1}{2N^2}$ & $\frac{1}{4N^2}$ & $\frac{4N-1}{4N^2}$
\\
$q_8$ & $\frac{(1-s_N)(N-2)}{N}$ & $\frac{s_N(N-1)}{2N}$ & $\frac{1-s_N}{2N}$ & $\frac{1-s_N}{2N}$ & $\frac{1-s_N}{2N}$ & $\frac{s_N}{4N}$ & $\frac{2Ns_N-s_N+2}{4N}$
\\
$q_9$ & $\frac{(1-s_N)(N-2)}{N}$ & $\frac{1-s_N}{2N}$ & $\frac{s_N(N-1)}{2N}$ & $\frac{1-s_N}{2N}$ & $\frac{1-s_N}{2N}$ & $\frac{s_N}{4N}$ & $\frac{2Ns_N-s_N+2}{4N}$
\\
$q_{10}$ & $\frac{(N-2)(1-s_N)}{N}$ & $\frac{1-s_N}{2N}$ & $\frac{1-s_N}{2N}$ & $\frac{s_N(N-1)}{2N}$ & $\frac{s_N(N-1)}{2N}$ & $\frac{s_N}{4N}$ & $\frac{4-s_N}{4N}$
\\
$q_{11}$ & $0$ & $0$ & $0$ & $0$ & $1-\frac{1}{N}$  & $\frac{1}{2N}$ & $\frac{1}{2N}$
\\
$q_{12}$ & $0$ & $0$ & $0$ & $0$ & $1-s_N$ & $\frac{s_N}{2}$ & $\frac{s_N}{2}$
\\
$q_0$ & $0$ & $0$ & $0$ & $0$ & $0$ & $0$ & $1$
\\
\hline
\end{tabular}
}
\end{center}

\medskip

When $N\to\infty$, the above matrix converges entry-wise to the matrix

\medskip

\begin{center}
\scalebox{0.8}{
\begin{tabular}{|c|ccccccccccccc|}
\hline
& $q_1$ & $q_2$ & $q_3$ & $q_4$ & $q_5$ & $q_6$ & $q_7$ & $q_8$ & $q_9$ & $q_{10}$ & $q_{11}$ & $q_{12}$ & $q_0$
\\
\hline
$q_1$ & 1 & 0 & 0 & 0 & 0 & 0 & 0 & 0 & 0 & 0 & 0 & 0 & 0
\\
$q_2$ & $1-s_N$ & $\frac{s_N}{2}$ & $0$ & $0$ & $0$ & $0$ & $0$ & $0$ & $0$ & $0$ & $0$ & $0$ &  $\frac{s_N}{2}$
\\
$q_3$ & $1-s_N$ & $0$ & $\frac{s_N}{2}$ & $0$ & $0$ & $0$ & $0$ & $0$ & $0$ & $0$ & $0$ & $0$ &  $\frac{s_N}{2}$
\\
$q_4$ & $1-s_N$ & $0$ & $0$ & $\frac{s_N}{2}$ & $0$ & $0$ & $\frac{s_N}{2}$ & $0$ & $0$ & $0$ & $0$ & $0$ &  $0$
\\
$q_5$ & $(1-s_N)^2$ & $\frac{s_N(1-s_N)}{2}$ & $\frac{s_N(1-s_N)}{2}$ & $0$ & $\frac{s_N^2}{4}$ & $0$ & $0$ & $0$ & $0$ & $0$ & $0$ & $0$ &  $\frac{4s_N-s_N^2}{4}$
\\
$q_6$ & $(1-s_N)^2$ & $0$ & $0$ & $s_N(1-s_N)$ & $0$ & $\frac{s_N^2}{4}$ & $s_N(1-s_N)$ & $0$ & $0$ & $\frac{s_N^2}{2}$ & $\frac{s_N^2}{4}$ & $0$ &  $0$
\\
$q_7$ & $0$ & $0$ & $0$ & $0$ & $0$ & $0$ & $1$ & $0$ & $0$ & $0$ & $0$ & $0$ & $0$
\\
$q_8$ & $0$ & $0$ & $0$ & $0$ & $0$ & $0$ & $1-s_N$ & $\frac{s_N}{2}$ & $0$ & $0$ & $0$ & $0$ & $\frac{s_N}{2}$
\\
$q_9$ & $0$ & $0$ & $0$ & $0$ & $0$ & $0$ & $1-s_N$ & $0$ & $\frac{s_N}{2}$ & $0$ & $0$ & $0$ & $\frac{s_N}{2}$
\\
$q_{10}$ & $0$ & $0$ & $0$ & $0$ & $0$ & $0$ & $1-s_N$ & $0$ & $0$ & $\frac{s_N}{2}$ & $\frac{s_N}{2}$ & $0$ &  $0$
\\
$q_{11}$ & $0$ & $0$ & $0$ & $0$ & $0$ & $0$ & $0$ & $0$ & $0$ & $0$ & $1$ & $0$ &  $0$
\\
$q_{12}$ & $0$ & $0$ & $0$ & $0$ & $0$ & $0$ & $0$ & $0$ & $0$ & $0$ & $1-s_N$ & $\frac{s_N}{2}$ &  $\frac{s_N}{2}$
\\
$q_0$ & $0$ & $0$ & $0$ & $0$ & $0$ & $0$ & $0$ & $0$ & $0$ & $0$ & $0$ & $0$ & $1$
\\
\hline
\end{tabular}
}
\end{center}
\medskip

\subsection{Transition matrix  \texorpdfstring{${\bf \Pi}_N$}{TEXT} when \texorpdfstring{$r_N=1/2$}{TEXT} (free recombination)} 

For free recombination i.e. $r_N=1/2$ loci select parent chromosomes independently of other loci on the same chromosome. The transition matrix when $r_N=1/2$ is the following.

\medskip

\begin{center}
\scalebox{0.8}{
\begin{tabular}{|c|cccccc|}
\hline
& $q_1$ & $q_2$ & $q_3$ & $q_4$ & $q_5$ & $q_6$
\\
\hline
$q_1$ & $\frac{(N-1)(N-2)(N-3)}{N^3}$ & $\frac{(N-1)(N-2)}{2N^3}$ & $\frac{(N-1)(N-2)}{2N^3}$ & $\frac{2(N-1)(N-2)}{N^3}$ & $\frac{N-1}{4N^3}$ & $\frac{N-1}{2N^3}$
\\
$q_2$ & $\frac{(N-2)(N-3)(1-s_N)}{N^2}$ & $\frac{s_N(N-1)(N-2)}{2N^2}$ & $\frac{(N-2)(1-s_N)}{2N^2}$ & $\frac{2(N-2)(1-s_N)}{N^2}$ & $\frac{s_N(N-1)}{4N^2}$ & $\frac{1-s_N}{2N^2}$ 
\\
$q_3$ & $\frac{(N-2)(N-3)(1-s_N)}{N^2}$ & $\frac{(N-2)(1-s_N)}{2N^2}$ & $\frac{s_N(N-1)(N-2)}{2N^2}$ & $\frac{2(N-2)(1-s_N)}{N^2}$ & $\frac{s_N(N-1)}{4N^2}$ & $\frac{1-s_N}{2N^2}$
\\
$q_4$ & $\frac{(N-2)(N-3)(1-s_N)}{N^2}$ & $\frac{(N-2)(1-s_N)}{2N^2}$ & $\frac{(N-2)(1-s_N)}{2N^2}$ & $\frac{(N-2)(Ns_N-4s_N+3)}{2N^2}$ & $\frac{1-s_N}{4N^2}$ & $\frac{Ns_N-2s_N+1}{4N^2}$ 
\\
$q_5$ & $\frac{(N-2)(N-3)(1-s_N)^2}{N(N-1)}$ & $\frac{s_N(N-2)(1-s_N)}{2N}$ & $\frac{s_N(N-2)(1-s_N)}{2N}$ & $\frac{2(N-2)(1-s_N)^2}{N(N-1)}$ & $\frac{(N-1)s_N^2}{4N}$ & $\frac{(1-s_N)^2}{2N(N-1)}$ 
\\
$q_6$ & $\frac{(N-2)(N-3)(1-s_N)^2}{N(N-1)}$ & $\frac{(N-2)(1-s_N)^2}{2N(N-1)}$ & $\frac{(N-2)(1-s_N)^2}{2N(N-1)}$ & $\frac{(N-2)(1-s_N)(Ns_N-2s_N+1)}{N(N-1)}$ & $\frac{(1-s_N)^2}{4N(N-1)}$ & $\frac{(1-s_N)^2}{2N(2N-2)}+\frac{s_N^2(N-1)}{4N}$ 
\\
$q_7$ & $0$ & $0$ & $0$ & $\frac{(N-1)(N-2)}{2N^2}$ & $0$ & $\frac{(N-1)}{4N^2}$
\\
$q_8$ & $0$ & $0$ & $0$ & $\frac{(1-s_N)(N-2)}{2N}$ & $0$ & $\frac{1-s_N}{4N}$
\\
$q_9$ & $0$ & $0$ & $0$ & $\frac{(1-s_N)(N-2)}{2N}$ & $0$ & $\frac{1-s_N}{4N}$
\\
$q_{10}$ & $0$ & $0$ & $0$ & $\frac{(N-2)(1-s_N)}{2N}$ & $0$ & $\frac{s_N(N-1)}{4N}$
\\
$q_{11}$ & $0$ & $0$ & $0$ & $0$ & $0$ & $\frac{N-1}{4N}$
\\
$q_{12}$ & $0$ & $0$ & $0$ & $0$ & $0$ & $\frac{1-s_N}{4}$
\\
$q_0$ & $0$ & $0$ & $0$ & $0$ & $0$ & $0$
\\
\hline
\end{tabular}
}
\end{center}

\medskip

\begin{center}
\scalebox{0.745}{
\begin{tabular}{|c|ccccccc|}
\hline
& $q_7$ & $q_8$ & $q_9$ & $q_{10}$ & $q_{11}$ & $q_{12}$ & $q_0$
\\
\hline
$q_1$  & $\frac{2(N-1)(N-2)}{N^3}$ & $\frac{N-1}{N^3}$ & $\frac{N-1}{N^3}$ & $\frac{N-1}{N^3}$ & $\frac{N-1}{2N^3}$ & $\frac{1}{4N^3}$ & $\frac{4N-1}{4 N^2}$
\\
$q_2$ & $\frac{2(N-2)(1-s_N)}{N^2}$ & $\frac{s_N(N-1)}{N^2}$ & $\frac{1-s_N}{N^2}$ & $\frac{1-s_N}{N^2}$ & $\frac{1-s_N}{2N^2}$ & $\frac{s_N}{4N^2}$ & $\frac{2Ns_N-s_N+2}{4N}$
\\
$q_3$ & $\frac{2(N-2)(1-s_N)}{N^2}$ & $\frac{1-s_N}{N^2}$ & $\frac{s_N(N-1)}{N^2}$ & $\frac{1-s_N}{N^2}$ & $\frac{1-s_N}{2N^2}$ & $\frac{s_N}{4N^2}$ & $\frac{2Ns_N-s_N+2}{4N}$
\\
$q_4$ & $\frac{(N-2)(Ns_N-4s_N+3)}{2N^2}$ & $\frac{Ns_N-2s_N+1}{2N^2}$ & $\frac{Ns_N-2s_N+1}{2N^2}$ & $\frac{Ns_N-2s_N+1}{2N^2}$ & $\frac{Ns_N-2s_N+1}{4N^2}$ & $\frac{s_N}{4N^2}$ & $\frac{4N-1}{4N^2}$
\\
$q_5$ & $\frac{2(N-2)(1-s_N)^2}{N(N-1)}$ & $\frac{s_N(1-s_N)}{N}$ & $\frac{s_N(1-s_N)}{N}$ & $\frac{(1-s_N)^2}{N(N-1)}$ & $\frac{(1-s_N)^2}{2N(N-1)}$ & $\frac{s_N^2}{4N}$ & $\frac{(4-s_N)s_N}{4}$
\\
$q_6$ & $\frac{(N-2)(1-s_N)(Ns_N-2s_N+1)}{N(N-1)}$ & $\frac{s_N(1-s_N)}{N}$ & $\frac{s_N(1-s_N)}{N}$ & $\frac{(1-s_N)^2}{2N(N-1)}+\frac{s_N^2(N-1)}{2N}$ & $\frac{(1-s_N)^2}{2N(2N-2)}+\frac{s_N^2(N-1)}{4N}$ & $\frac{s_N^2}{4N}$ & $\frac{4-s_N^2}{4N}-\frac{(1-s_N)^2}{4N(N-1)}$
\\
$q_7$ & $\frac{(N-2)(N-1)}{2N^2}$ & $\frac{N-1}{2N^2}$ & $\frac{N-1}{2N^2}$ & $\frac{N-1}{2N^2}$ & $\frac{N-1}{4N^2}$ & $\frac{1}{4N^2}$ & $\frac{4N-1}{4N^2}$
\\
$q_8$  & $\frac{(1-s_N)(N-2)}{2N}$ & $\frac{s_N(N-1)}{2N}$ & 
$\frac{1-s_N}{2N}$ & $\frac{1-s_N}{2N}$ & $\frac{1-s_N}{4N}$ & $\frac{s_N}{4N}$ & $\frac{2Ns_N-s_N+2}{4N}$
\\
$q_9$ & $\frac{(1-s_N)(N-2)}{2N}$ & $\frac{1-s_N}{2N}$ & $\frac{s_N(N-1)}{2N}$ & $\frac{1-s_N}{2N}$ & $\frac{1-s_N}{4N}$ & $\frac{s_N}{4N}$ & $\frac{2Ns_N-s_N+2}{4N}$
\\
$q_{10}$ & $\frac{(1-s_N)(N-2)}{2N}$ & $\frac{1-s_N}{2N}$ & $\frac{1-s_N}{2N}$ & $\frac{s_N(N-1)}{2N}$ & $\frac{s_N(N-1)}{4N}$ & $\frac{s_N}{4N}$ & $\frac{4-s_N}{4N}$
\\
$q_{11}$ & $0$ & $0$ & $0$ & $\frac{N-1}{2N}$ & $\frac{N-1}{4N}$  & $\frac{1}{4N}$ & $\frac{3}{4N}$
\\
$q_{12}$ & $0$ & $0$ & $0$ & $\frac{1-s_N}{2}$ & $\frac{1-s_N}{4}$  & $\frac{s_N}{4}$ & $\frac{3s_N}{4}$
\\
$q_0$ & $0$ & $0$ & $0$ & $0$ & $0$ & $0$ & $1$
\\
\hline
\end{tabular}
}
\end{center}

\medskip

When $N\to\infty$, the above matrix converges entry-wise to the matrix

\medskip

\begin{center}
\scalebox{0.77}{
\begin{tabular}{|c|ccccccccccccc|}
\hline
& $q_1$ & $q_2$ & $q_3$ & $q_4$ & $q_5$ & $q_6$ & $q_7$ & $q_8$ & $q_9$ & $q_{10}$ & $q_{11}$ & $q_{12}$ & $q_0$
\\
\hline
$q_1$ & 1 & 0 & 0 & 0 & 0 & 0 & 0 & 0 & 0 & 0 & 0 & 0 & 0
\\
$q_2$ & $1-s_N$ & $\frac{s_N}{2}$ & $0$ & $0$ & $0$ & $0$ & $0$ & $0$ & $0$ & $0$ & $0$ & $0$ &  $\frac{s_N}{2}$
\\
$q_3$ & $1-s_N$ & $0$ & $\frac{s_N}{2}$ & $0$ & $0$ & $0$ & $0$ & $0$ & $0$ & $0$ & $0$ & $0$ &  $\frac{s_N}{2}$
\\
$q_4$ & $1-s_N$ & $0$ & $0$ & $\frac{s_N}{2}$ & $0$ & $0$ & $\frac{s_N}{2}$ & $0$ & $0$ & $0$ & $0$ & $0$ &  $0$
\\
$q_5$ & $(1-s_N)^2$ & $\frac{s_N(1-s_N)}{2}$ & $\frac{s_N(1-s_N)}{2}$ & $0$ & $\frac{s_N^2}{4}$ & $0$ & $0$ & $0$ & $0$ & $0$ & $0$ & $0$ &  $\frac{(4-s_N)s_N}{4}$
\\
$q_6$ & $(1-s_N)^2$ & $0$ & $0$ & $s_N(1-s_N)$ & $0$ & $\frac{s_N^2}{4}$ & $s_N(1-s_N)$ & $0$ & $0$ & $\frac{s_N^2}{2}$ & $\frac{s_N^2}{4}$ & $0$ &  $0$
\\
$q_7$ & $0$ & $0$ & $0$ & $\frac{1}{2}$ & $0$ & $0$ & $\frac{1}{2}$ & $0$ & $0$ & $0$ & $0$ & $0$ & $0$
\\
$q_8$ & $0$ & $0$ & $0$ & $\frac{1-s_N}{2}$ & $0$ & $0$ & $\frac{1-s_N}{2}$ & $\frac{s_N}{2}$ & $0$ & $0$ & $0$ & $0$ &  $\frac{s_N}{2}$
\\
$q_9$ & $0$ & $0$ & $0$ & $\frac{1-s_N}{2}$ & $0$ & $0$ & $\frac{1-s_N}{2}$ & $0$ & $\frac{s_N}{2}$ & $0$ & $0$ & $0$ &  $\frac{s_N}{2}$
\\
$q_{10}$ & $0$ & $0$ & $0$ & $\frac{1-s_N}{2}$ & $0$ & $\frac{s_N}{4}$ & $\frac{1-s_N}{2}$ & $0$ & $0$ & $\frac{s_N}{2}$ & $\frac{s_N}{4}$ & $0$ &  $0$
\\
$q_{11}$ & $0$ & $0$ & $0$ & $0$ & $0$ & $\frac{1}{4}$ & $0$ & $0$ & $0$ & $\frac{1}{2}$ & $\frac{1}{4}$ & $0$ &  $0$
\\
$q_{12}$ & $0$ & $0$ & $0$ & $0$ & $0$ & $\frac{1-s_N}{4}$ & $0$ & $0$ & $0$ & $\frac{1-s_N}{2}$ & $\frac{1-s_N}{4}$ & $\frac{s_N}{4}$ &  $\frac{3s_N}{4}$
\\
$q_0$ & $0$ & $0$ & $0$ & $0$ & $0$ & $0$ & $0$ & $0$ & $0$ & $0$ & $0$ & $0$ & $1$
\\
\hline
\end{tabular}
}
\end{center}

It can be verified that the sum of the entries in each row is $1$ in all matrices in this subsection.

\section{Proofs}\label{A:Proofs}

\begin{proof}[Proof of Lemma \ref{L:ETi}]
By definition, $T_i=0$ almost surely under $\P_c$. Write the transition matrix in Figure \ref{M:single_locus} as
$\mathbf{\Lambda}_N=(p_{qk})_{q,k\in\{\text{coal}, \,\text{same},\, \text{diff}\}}$.
By a first step analysis, we obtain the following linear system of  equations: for $q\in\{\text{same}, \text{diff}\}$, 
\begin{equation}\label{Eq:1stMoment}
    \E_q[T_i]=\sum_{k\in\{\text{coal},\, \text{same},\, \text{diff}\}}p_{qk}\E_k[T_i+1] = 1+\sum_{k\in\{\text{same},\, \text{diff}\}}p_{qk}\E_k[T_i]
\end{equation}
and
\begin{equation}\label{Eq:2ndMoment}
    \E_q[T_i^2]=\sum_{k\in \{\text{coal},\, \text{same},\, \text{diff}\}} p_{qk}\E_k[(T_i+1)^2]=1+\sum_{k\in\{\text{same},\, \text{diff}\}}p_{qk}(\E_k[T_i^2]+2\E_k[T_i]).
\end{equation}
Solving the 4 equations given by \eqref{Eq:1stMoment} and \eqref{Eq:2ndMoment} for the 4 unknowns, we obtain Lemma \ref{L:ETi}.
\end{proof}

\medskip

\begin{proof}[Proof of Lemma \ref{L:Distribution_Ti}]
Following \cite[Chapter 6.2]{wakeley2009coalescent}, split $\mathbf{\Lambda}_N$ from \eqref{M:single_locus} as
$$
\mathbf{\Lambda}_{N}=A+\frac{1}{N}\,B,
$$
where $A\,:=\,\lim_{N\to\infty}\mathbf{\Lambda}_{N}$ contains the $O(1)$ terms of $\mathbf{\Lambda}_N$ and $B\,:=\,\lim_{N\to\infty}N(\mathbf{\Lambda}_{N}-A)$  the $O(\frac{1}{N})$ terms. Moreover, let $P\,:=\,\lim_{k\rightarrow\infty}A^k$ and $G\,:=\, P B P$. By
\cite[Lemma 1]{Mohle1998a},
\begin{align}
   \mathbb P_\text{same}(T_i>Nt) 
    =&\, 
    \left(0,1,0\right)\mathbf{\Lambda}_{N}^{[N t]} \left(0,1,1\right)^{T}\label{Pf:L2_1}
    \\
    \rightarrow&\,
     \left(0,1,0\right)P e^{tG} \left(0,1,1\right)^{T} \qquad \text{as }N\rightarrow\infty,\label{Pf:L2_3} 
\end{align}
for all $t\in(0,\infty)$. In \eqref{Pf:L2_1}-\eqref{Pf:L2_3}, the vector $ \left(0,1,0\right)$ is the initial distribution of the ancestral process and it corresponds to the case in which the sampled gene copies at generation $g=0$ are in the \textit{same} individual.  
The right hand side of \eqref{Pf:L2_3}  is equal to $e^{-t/(2-s)}\cdot \frac{2(1-s)}{2-s}$ when  $s_N=s$  and  equal to $e^{-t/2}$ when $s_N=\tilde{\sigma}/N$. This proves \eqref{R1:P_s_tail}. Changing $\left(0,1,0\right)$ in \eqref{Pf:L2_1}-\eqref{Pf:L2_3} to $\left(0,0,1\right)$ gives \eqref{R1:P_d_tail}. 

\end{proof}

\medskip

\begin{proof}[Proof of Theorem \ref{T:q12}] \label{proof:thm1}
Recall that ${\bf \Pi}_N$ is a $13\times 13$ Markovian matrix in \ref{A:Matrix}, whose last row and last column corresponding to the absorbing state $q_0$. We let
$\widetilde{\bf \Pi}_N=((\widetilde{\bf \Pi}_N)_{qk})_{q,k\in\mathcal{S}}$ be the $12\times 12$ \textit{sub-Markovian} matrix obtained by deleting the last row and the last column of ${\bf \Pi}_N$.

To derive the covariance between the coalescence times for two loci, we perform a first step analysis as in \cite[eqn.(15)]{king2018non}.     
We denote by $q_0$ the state in which coalescence occurred in one or both loci. Then $\E_{q_0}[T_iT_j]=0$, and for all $q\in \mathcal S$, the set of non-coalescent states, we have:   
    \begin{align} 
    \E_q[T_iT_j] &= \sum_{k\in \mathcal S\cup \{q_0\}}({\bf \Pi}_N)_{qk}\,\E_k[(T_i + 1)(T_j+ 1)] 
    \notag\\
    &= 1 + \sum_{k\in \mathcal S\cup \{q_0\}} ({\bf \Pi}_N)_{qk}\,\E_k[T_i] + \sum_{k\in \mathcal S\cup \{q_0\}}({\bf \Pi}_N)_{qk}\E_k[T_j] + \sum_{k\in \mathcal S\cup \{q_0\}} ({\bf \Pi}_N)_{qk}\,\E_k[T_iT_j] \notag\\
    &= \sum_{k\in \mathcal S} (\widetilde{\bf \Pi}_N)_{qk}\,\E_k[T_iT_j]+\E_q[T_i]+\E_q[T_j]-1,\label{E:expectation}
    \end{align}
    where the last equality follows since  $\E_q[T_i] = \sum_{k\in \mathcal S\cup \{q_0\}} ({\bf \Pi}_N)_{qk}\,\E_k[T_i]+1$ when $q\neq q_0$, and since $\E_{q_0}[T_iT_j]=0$. Note that $\bf \Pi_N$ is changed to $\widetilde{\bf \Pi}_N$ in the last equality.

Define the $12\times 1$ column vectors
\begin{align*}
\quad
 \vec{b}_N :=(\E_q[T_i]+\E_q[T_j] - 1)_{q\in \mathcal{S}} \qquad\text{and}
\qquad
 \vec{E}_N  :=(\E_q[T_iT_j])_{q\in \mathcal{S}}.
\end{align*} 
The vector $\vec b_N$  can be directly computed using Lemma \ref{L:ETi} and simple facts like
\begin{equation}\label{q12toss}
(\E_{q_{12}}[T_i],\;\E_{q_{12}}[T_j]) = (\E_{\text{same}}[T_i],\;\E_{\text{same}}[T_j]) \quad \text{and} \quad(\E_{q_{2}}[T_i],\;\E_{q_{2}}[T_j]) = (\E_{\text{same}}[T_i],\;\E_{\text{diff}}[T_j]).  
\end{equation}

Equation \eqref{E:expectation} can now be rewritten as
 the  matrix equation  
\begin{align} 
\vec{E}_N&=\widetilde{\bf \Pi}_N \cdot \vec{E}_N + \vec b_N \label{E:Mx}
\end{align}
which can be solved for 
the unknown vector $\vec{E}_N$ to obtain 
\begin{align}
\vec{E}_N &= \left({\bf I}_{12\times 12}-\widetilde{\bf \Pi}_N\right)^{-1}\,\vec b_N,\label{E:sol_E}
\end{align}
where ${\bf I}_{12\times 12}$ is the $12\times 12$ identity matrix. Note that the matrix ${\bf I}_{12\times 12}-\widetilde{\bf \Pi}_N$ is invertible because $\widetilde{\bf \Pi}_N$ is \textit{sub-Markovian} and hence all its eigenvalues have absolute values strictly less than 1. 
Using MATLAB code (\ref{A:Code}) we computed the RHS of \eqref{E:sol_E} and obtained $(\E_q[T_iT_j])_{q\in \mathcal{S}}$.

To obtain the expressions in \eqref{cov1}, \eqref{cov2}, \eqref{cov3}, and \eqref{cov4} we provide the asymptotic values of $\text{Cov}_q[T_i, T_j]=\E_q[T_iT_j]-\E_q[T_i]\cdot \E_q[T_j]$

We have already computed the values of $\E_q[T_i]\cdot\E_q[T_j]$ in Lemma \ref{L:ETi} and found that they are of order $O(N^2)$. 
The values of  $\lim_{N\to\infty}\E_{q}[T_iT_j]/N^2$ and of $\lim_{N\to\infty}\E_{q}[T_i]\cdot\E_{q}[T_j]/N^2$, for $q\in\mathcal{S}$, are listed in
\ref{A:expectation} tables \eqref{E:limN^2} and \eqref{E:limNN} respectively.
For states where $\lim_{N\to\infty}\text{Cov}_q[T_i, T_j]/N^2=0$ we  provide the value of $\lim_{N\to\infty}\text{Cov}_q[T_i, T_j]/N$ in \ref{E:Scenario4O(N)}.

\end{proof}

\medskip

\begin{proof}[Proof of Corollary \ref{C:q12q11}]
Similar to \eqref{q12toss}, it holds that
\begin{equation}\label{q12toss_Var}
(\textrm{Var}_{q_{12}}[T_i],\;\textrm{Var}_{q_{12}}[T_j]) = (\textrm{Var}_{s}[T_i],\;\textrm{Var}_{\rm same}[T_j])    
\end{equation}
Recall from Lemma \ref{L:ETi} that $\textrm{Var}_{s}[T_i]=(4-4s_N)N^2+(2-2s_N)N+2$. Hence
\begin{align*}
    \textrm{Corr}_{q_{12}}[T_i, T_j] = \frac{\textrm{Cov}_{q_{12}}[T_i, T_j]}{(4-4s_N)N^2+(2-2s_N)N+2}.
\end{align*}
The result now follows by using the values of $\textrm{Cov}_{q_{12}}[T_i, T_j]$ for each of the four scenarios from Theorem \ref{T:q12}. Note a similar argument can be used for states $q_{11}$ but we need to use $\text{Var}_{\rm diff}[T_i]$ instead of $\text{Var}_{\rm same}[T_i]$. Other initial conditions can be treated in the same way.
\end{proof}

\medskip

\begin{proof}[Proof of Theorem \ref{T:q12_extreme}]
We begin with state $q_{12}$. When $s_N=1$ for all $N\in\mathbb{N}$ (i.e. total selfing) and the initial gene copies are in a single individual, 
we only need to trace back a single ancestral individual for each generation in the past. This reduces to a simple Markov chain $(X_g)_{g\in\Z_+}$ where $X_g$ is the state of the chromosomes in {\it the ancestor} in generation $g$ in the past, whose transition is determined by the left column of Figure \ref{Fig:Chrom+loci}. In this case,
by a one-step analysis similar to that of \eqref{E:expectation} we have
\begin{align*}
    \E_{q_{12}}[T_iT_j] &= \frac{r_N^2+(r_N-1)^2}{2}\E_{q_{12}}[T_iT_j]+\E_\text{same}[T_i]+\E_\text{same}[T_j]-1 \\
    &= \frac{r_N^2+(r_N-1)^2}{2}\E_{q_{12}}[T_iT_j]+3.
\end{align*}
Upon solving, we obtain $\E_{q_{12}}[T_iT_j] = \frac{6}{1-2r_N^2+2r_N}$. Therefore,
\begin{align*}
\textrm{Cov}_{q_{12}}[T_i,T_j] = \,\frac{6}{1-2r_N^2+2r_N}-4.
\end{align*}


In a similar fashion, we can set up a one-step analysis of a Markov Chain to solve for the covariance of state $q_{11}$. However, we resort to an even simpler explanation. Note that \begin{align*}
    \E_{q_{11}}[T_iT_j] = \E_{q_{11}}[T_i^2]+O(N).
\end{align*}
Since $T_i$ and $T_j$ differ by $O(1)$ and $\E_{q_{11}}[T_i]$, $\E_{q_{11}}[T_j]$ are $O(N)$. Since $\E_{q_{11}}[T_i^2]$ is $O(N^2)$ we have that $\E_{q_{11}}[T_iT_j] \asymp \E_{q_{11}}[T_i^2]$ yielding that $\text{Cov}_{q_{11}}[T_i, T_j] \asymp \text{Var}[T_i]$.

Lastly by the same logic as for state $q_{11}$ we have:
\begin{align*}
    \E_{q_{5}}[T_iT_j] &= \frac{1}{4N}\E_{q_{12}}[T_iT_j]+\frac{N-1}{4N}\cdot \E_{q_5}[T_i, T_j]+\E_\text{same}[T_i]+\E_\text{same}[T_j]-1 
    \\
    \E_{q_{5}}[T_iT_j] &= \frac{4N}{3N+1} (3+\frac{6}{4N(1-2r_N^2+2r_N)})
\end{align*}

Therefore we obtain 
\begin{align*}
    \textrm{Cov}_{q_{5}}[T_iT_j] &= \E_{q_{5}}[T_iT_j] - 4  =\frac{2(2r_N-1)^2}{(3N+1)(1+2r_N-2r_N^2)}.
\end{align*}

When $r_N=0$ for all $N\in \mathbb N$ (i.e. no recombination) we note that loci belonging to the same chromosome will remain on the same chromosomes for each generation in the pass. This means we have that $T_i=T_j$. Thus $\text{Cov}_{q_{12}}[T_i, T_j] = \text{Var}_{q_{12}}[T_i]=(4-4s_N)N^2+(2-2s_N)N+2$ since the two gene copies at loci $i$ are in the same individual in state $q_{12}$. By the same argument we have that $\text{Cov}_{q_{11}}[T_i, T_j] = \text{Var}_{q_{11}}[T_i]=(4-4s_N+s_N^2)N^2+(2-3s_N)N+2$ since the two gene copies are in different individuals in state $q_{11}$. For state $q_5$ we first plug in $r_N=0$ into the matrix $\bf \Pi_N$ from Theorem \ref{T:q12}, and repeat the steps in the proof of Theorem \ref{T:q12} to obtain $\text{Cov}_{q_5}[T_i, T_j]\asymp 8(1-s)^2/9\cdot N^2$.
\end{proof}

\begin{proof}[Proof of Corollary \ref{C:q12_extreme}]
From Theorem \ref{T:q12_extreme} we have that when $s_N=1$:
\begin{align*}
\begin{cases}
    \text{Cov}_{q_{5}}[T_i, T_j]=\frac{2(2r_N-1)^2}{(3N+1)(1+2r_N-2r_N^2)}\\
    \text{Cov}_{q_{11}}[T_i, T_j]\asymp N^2\\
    \text{Cov}_{q_{12}}[T_i, T_j]=\frac{6}{1+2r_N-2r_N^2}-4
\end{cases}
\end{align*}

For state $q_5$ and $q_{12}$ we note that both pairs of loci $i$ and $j$ belong to the same individual. Thus we have that $\text{Var}_{q_5}[T_i]=\text{Var}_{q_5}[T_j]=\text{Var}_{q_{12}}[T_i]=\text{Var}_{q_{12}}[T_j]=2$ by setting $s_N=1$ in Lemma \ref{L:ETi}. Thus we have $\text{Corr}_{q_{5}}[T_i, T_j]=\frac{(2r_N-1)^2}{(3N+1)(1+2r_N-2r_N^2)}$ and $\text{Corr}_{q_{12}}[T_i, T_j]=\frac{(2r_N-1)^2}{(3N+1)(1+2r_N-2r_N^2)}$

For state $q_{11}$ we note that both pairs of loci $i$ and $j$ belong to the different individuals. Thus we have that $\text{Var}_{q_{11}}[T_i]=\text{Var}_{q_{11}}[T_j]\asymp N^2$ by setting $s_N=1$ in Lemma \ref{L:ETi}. Thus we have $\text{Corr}_{q_{11}}[T_i, T_j]\asymp \frac{N^2}{N^2} \asymp 1$.

\bigskip

Again from Theorem \ref{T:q12_extreme} we have that, when $r_N=0$,
\begin{align*}
\begin{cases}
\text{Cov}_{q_5}[T_i, T_j]\asymp 8(1-s_N)^2/9\cdot N^2\\
\text{Cov}_{q_{11}}[T_i, T_j]=(4-4s_N +s_N^2 )N^2 +(2-3s_N )N +2 \\
\text{Cov}_{q_{12}}[T_i, T_j]= (4-4s_N )N^2 +(2-2s_N )N +2
\end{cases}
\end{align*}

For state $q_5$ and $q_{12}$ we note that both pairs of loci $i$ and $j$ belong to the same individual. Thus we have that $\text{Var}_{q_5}[T_i]=\text{Var}_{q_5}[T_j]=\text{Var}_{q_{12}}[T_i]=\text{Var}_{q_{12}}[T_j]=(4-4s_N)N^2+(2-2s)N+2$ from Lemma \ref{L:ETi}. Thus we have $\text{Corr}_{q_{5}}[T_i, T_j]\asymp 2(1-s_N)/9$ and $\text{Corr}_{q_{12}}[T_i, T_j]=1$

For state $q_{11}$ we note that both pairs of loci $i$ and $j$ belong to the different individuals. Thus we have that $\text{Var}_{q_{11}}[T_i]=\text{Var}_{q_{11}}[T_j]=(4-4s_N+s_N^2)N^2+(2-3s_N)N+2$ from Lemma \ref{L:ETi}. Thus we have $\text{Corr}_{q_{11}}[T_i, T_j]=1$.

\end{proof}

\section{\texorpdfstring{$\lim_{N\to\infty}\E_{q}[T_iT_j]/N^2$}{TEXT} and \texorpdfstring{$\lim_{N\to\infty}\E_{q}[T_i]\cdot\E_{q}[T_j]/N^2$}{TEXT} for \texorpdfstring{$q\in\mathcal{S}$}{TEXT} }\label{A:expectation}
Set $\lambda_1=8r^2+26r+9$, $\lambda_1'=8\tilde{\rho}^2+26\tilde{\rho}+9$, and $\lambda_2=8\tilde{\rho}^2s^2-16\tilde{\rho}^2s-26\tilde{\rho}s+\lambda_1'$. The following table contains the values of $\lim_{N\to\infty}\E_q[T_iT_j]/N^2$.

\begin{align}
\begin{array}{c|c|c|c|c} \label{E:limN^2}
& \text{Scenario } (i) & \text{Scenario }(ii) & \text{Scenario }(iii) & \text{Scenario }(iv)\\
\hline
q_1 & \frac{4\lambda_1+8}{\lambda_1} & \frac{(s-2)^2(\lambda_2+2)}{\lambda_2} & 4 & (2-s)^2
\\[6pt]
q_2 & \frac{4\lambda_1+8}{\lambda_1} & \frac{2(1-s)(2-s)(\lambda_2+2)}{\lambda_2} & 4 & 2(1-s)(2-s)
\\[6pt]
q_3 & \frac{4\lambda_1+8}{\lambda_1} & \frac{2(1-s)(2-s)(\lambda_2+2)}{\lambda_2} & 4 & 2(1-s)(2-s)
\\[6pt]
q_4 & \frac{4\lambda_1+8}{\lambda_1} & \frac{(2-s)^2\cdot \lambda_2+(2-s)(4-s)}{\lambda_2} & 4 & (2-s)^2
\\[6pt]
q_5 & \frac{4\lambda_1+8}{\lambda_1} & \frac{4(1-s)^2(\lambda_2+2)}{\lambda_2} & 4 & 4(1-s)^2 
\\[6pt]
q_6 & \frac{4\lambda_1+8}{\lambda_1} & \frac{(2-s)^2\cdot\lambda_2-2\tilde{\rho}s^3+2\tilde{\rho}s^2+5s^2-4s+8}{\lambda_2} & 4 & (2-s)^2
\\[6pt]
q_7 & \frac{4\lambda_1+12}{\lambda_1} & \frac{(2-s)^2(\lambda_2+3)}{\lambda_2} & 4 & (2-s)^2
\\[6pt]
q_8 & \frac{4\lambda_1+12}{\lambda_1} & \frac{2(1-s)(2-s)(\lambda_2+3)}{\lambda_2} & 4 & 2(1-s)(2-s)
\\[6pt]
q_9 & \frac{4\lambda_1+12}{\lambda_1} & \frac{2(1-s)(2-s) (\lambda_2+3)}{\lambda_2} & 4 & 2(1-s)(2-s)
\\[6pt]
q_{10} & \frac{4\lambda_1+12}{\lambda_1} & \frac{(2-s)((2-s)\lambda_2-2\tilde{\rho}s^2+2\tilde{\rho}s+3s+6)}{\lambda_2} & 4 & (2-s)^2
\\[6pt]
q_{11} & \frac{4\lambda_1+8\tilde{\rho}+36}{\lambda_1} & \frac{(2-s)^2(\lambda_2-2\tilde{\rho}s+2\tilde{\rho}+9)}{\lambda_2} & 4 & (2-s)^2
\\[6pt]
q_{12} & \frac{4\lambda_1+8\tilde{\rho}+36}{\lambda_1} & \frac{2(1-s)(2-s)(\lambda_2-2\tilde{\rho}s+2\tilde{\rho}+9)}{\lambda_2} & 4 & \frac{2(s-1)(s-2)^2}{2s r^2-2s r+s -2} 
\end{array}
\end{align}

The following table contains the values of $\lim_{N\to\infty}\E_q[T_i]\cdot \E_q[T_j]/N^2$.
\begin{align}
\begin{array}{c|c|c|c|c|c} \label{E:limNN}
& d={\rm diff},\,s={\rm same} & \text{Scenario }(i) & \text{Scenario }(ii) & \text{Scenario }(iii) & \text{Scenario }(iv)\\
\hline
q_1 & (d,d) & 4 & (2-s)^2 & 4 & (2-s)^2
\\[6pt]
q_2 & (s,d) & 4 & 2(1-s)(2-s) & 4 & 2(1-s)(2-s)
\\[6pt]
q_3 & (d,s) & 4 & 2(1-s)(2-s) & 4 & 2(1-s)(2-s)
\\[6pt]
q_4 & (d,d) & 4 & (2-s)^2 & 4 & (2-s)^2
\\[6pt]
q_5 & (s,s) & 4 & 4(1-s)^2 & 4 & 4(1-s)^2
\\[6pt]
q_6 & (d,d) & 4 & (2-s)^2 & 4 & (2-s)^2
\\[6pt]
q_7 & (d,d) & 4 & (2-s)^2 & 4 & (2-s)^2
\\[6pt]
q_8 & (s,d) & 4 & 2(1-s)(2-s) & 4  & 2(1-s)(2-s)
\\[6pt]
q_9 & (d,s) & 4 & 2(1-s)(2-s) & 4 & 2(1-s)(2-s)
\\[6pt]
q_{10} & (d,d) & 4 & (2-s)^2 & 4 & (2-s)^2
\\[6pt]
q_{11} & (d,d) & 4 & (2-s)^2 & 4 & (2-s)^2
\\[6pt]
q_{12} & (s,s) & 4 & 4(1-s)^2 & 4 & 4(1-s)^2
\end{array}
\end{align}

For $q=q_{12}$ in Scenario $(iv)$, we have 
$\lim_{N\to\infty}\E_q[T_iT_j]/N^2\neq \lim_{N\to\infty}\E_q[T_i]\E_q[T_j]/N^2$ and so
$\text{Corr}[T_i, T_j]$ is of order $O(1)$. This makes sense  because there is an $O(1)$ probability that both loci coalesce in the first few generations as a result of selfing. 

Next, we note that $\lim_{N\to\infty}\E_q[T_iT_j]/N^2=\lim_{N\to\infty}\E_q[T_i]\E_q[T_j]/N^2$ for states $q_1, \dots, q_{11}$ in Scenario $(iv)$ and all 12 non-coalescence states for Scenario $(iii)$. In any of  these cases, $\text{Cov}_q[T_iT_j]$ is of order $O(N)$ and $\text{Corr}[T_i, T_j]$ is of order $O(1/N)$. 
In these cases, $r_N=O(1)$. 
When all loci four loci  are in the same individual there is a $O(1)$ (assuming $s_N=O(1))$ probability that selfing causes that both pairs coalesce before recombination occurs yielding $T_i=T_j$. Otherwise we need an event of  $O(1/N)$ probability to take place before all loci can be in the same individual. 

Since recombination is order $O(1)$, the probability that all four loci are in the same individual before coalescence is no greater than $O(1/N)$. Thus, recombination will only contribution to the $O(N^2)$ term of covariance when it is of order $O(1/N)$.

For Scenario $(i)$ and $(ii)$, where the recombination probability is of order $O(1/N)$,  there is probability of order $O(1)$ that after some generations backwards in time there are two chromosomes both with one locus $i$ (${\color{red} \bullet}$) and one locus $j$ (${\color{OliveGreen} \blacktriangle}$) before any coalescence event occurs. Since recombination is $O(1/N)$ there is a $O(1)$ probability that these two chromosomes coalesce before any other recombination event takes place. This means that $T_i=T_j$ with $O(1)$ probability, which gives $\text{Corr}[T_i, T_j]=O(1)$ for any initial state in Scenario $(i)$ and $(ii)$. 
Unless recombination probability is of $O(1/N)$ or we are in initial state $q_{12}$ with selfing probability $O(1)$ no other state has $\text{Corr}[T_i, T_j]=O(1)$, and so they all have $\text{Corr}[T_i, T_j]=O(1/N)$.

\section{\texorpdfstring{$\lim_{N\to\infty}\text{Cov}_q[T_i, T_j]/N$}{TEXT}  for suitable \texorpdfstring{$q\in\mathcal{S}$}{TEXT} }\label{A:smallcov}

In \ref{A:expectation} we explained how the $\text{Cov}_q[T_i, T_j]$ is of order $O(N)$ for states in $\mathcal S \setminus \{q_{12}\}$ in Scenario $(iv)$ and all states in $\mathcal S$ in Scenario $(iii)$. We now write down these $O(N)$ terms. 

\subsection{Covariance for Scenario \texorpdfstring{$(iv)$}{TEXT}} \label{E:Scenario4O(N)}

Let $R=(1-r)^2+r^2=2r^2-2r+1$ i.e. the probability that both chromosomes in an individual undergo recombination or both do not undergo recombination. 

\medskip

\resizebox{\textwidth}{!}{
$\begin{array}{c|c|}
\ & \lim_{N\to\infty}\textrm{Cov}_q[T_i, T_j]/N \textrm{ for Scenario (iv)} \\[6pt]
\hline 
q_1 & 0  \\[6pt]
q_2 & 0  \\[6pt]
q_3 & 0  \\[6pt]
q_4 & 0  \\[6pt]
q_5 & \frac{2s^3\left(1-s\right)\left(2R+s-2s R\right )}{\left(4-s^2\right)\left(2-s R\right)} \\[6pt]
q_6 & \frac{s^2\left(2-s \right)\left(\left(16r^5-40r^4+36r^3-14r^2+2r\right)s^3+\left(16r^4-40r^3+24r^2-2r-1\right)s^2-8r\left(2r^2-r-1\right)s+8r+4\right)}{4r\left(1-s\right)\left(s R-2\right)\left(rs+r- s-2\right)\left(2rs-s+2\right)} \\[6pt]
q_7 & 0 \\[6pt]
q_8 & \frac{s^2\left(1-s\right)\left(2R+s-2s R\right)}{\left(2-s\right)\left(2-s R\right)} \\[6pt]
q_9 & \frac{s^2\left(1-s\right)\left(2R+s-2s R\right)}{\left(2-s\right)\left(2-s R\right)} \\[6pt]
q_{10} & \frac{s\left(2-s\right)\left[\left(16r^5-40r^4+36r^3-14r^2+2r\right)s^4-\left(16r^3-24r^2+10r-1\right)s^3+\left(16r^4-48r^3+36r^2-4r-2\right)s^2+\left(24r^3-52r^2+36r-4\right)s-16r^3+24r^2-8r-8\right]}{4r\left(1-s\right)\left(s R-2\right)\left(rs+r- s-2\right)\left(2rs-s+2\right)}  \\[6pt]
q_{11} & \frac{\left(2-s\right)\left[\left(16r^5-40r^4+36r^3-14r^2+2r\right)s^5-\left(16r^4-24r^3+16r^2-6r+1\right)s^4+\left(32r^4-80r^3+80r^2-32r+4\right)s^3+8r\left(2r-1\right)s^2-\left(80r^2-80r+16\right)s+32r^2-32r+16\right]}{4r\left(1-s\right)\left(s R-2\right)\left(rs+r- s-2\right)\left(2rs-s+2\right)} \\
\end{array}$ }

\medskip


When $r_N=1/2$, the above table simplfies to the following:

\medskip

\begin{align}
\begin{array}{c|c|}
& \lim_{N\to\infty}\text{Cov}_q[T_i, T_j]/N \text{ for Scenario }(iv),\text{ when } r_N=1/2\\
\hline
q_1 & 0 
\\[6pt]
q_2 & 0
\\[6pt]
q_3 & 0
\\[6pt]
q_4 & 0
\\[6pt]
q_5 & \frac{4s^3(1-s)}{(4-s)(2-s)(s+2)} 
\\[6pt]
q_6 & \frac{4s^2(2-s)(2+s)}{(1-s)(3+s)(4-s)}
\\[6pt]
q_7 & 0
\\[6pt]
q_8 & \frac{2s^2(1-s)}{(4-s)(2-s)}
\\[6pt]
q_9 & \frac{2s^2(1-s)}{(4-s)(2-s)}
\\[6pt]
q_{10} & \frac{4s(2-s)(2+s)}{(1-s)(3+s)(4-s)} 
\\[6pt]
q_{11} & \frac{4(2-s)(2+s)}{(1-s)(3+s)(4-s)}
\\[6pt]
q_{12} & \infty
\end{array}
\end{align}

In Scenario $(iv)$ the order $O(N)$ term of $\text{Cov}_q[T_i, T_j]$ for states $q_1, q_2, q_3, q_4, q_7$ is again $0$. We also note that these states are the only ones that have in at least $3$ different individuals. Selfing increase covariance when all four loci belong to the same individual as explained in \ref{A:expectation}. Thus, the results make sense because for states with loci in $3$ or more individuals there is only a $O(1/N^2)$ probability that all four loci are in the same individual before coalescence. 

Further we note that the formulas for states $q_6, q_{10}$ and $q_{11}$. This is also anticipated since under free recombination these three states should give the same transition probabilities since it doesn't matter whether loci begin on the same chromosome or not. 

\subsection{Results for Scenario \texorpdfstring{$(iii)$}{TEXT}} \label{E:Scenario3O(N)}

\begin{align}
\begin{array}{c|c|}
& \lim_{N\to\infty}\text{Cov}_q[T_i, T_j]/N \text{ for Scenario }(iii)\\
\hline
q_1 & 0 
\\[6pt]
q_2 & 0
\\[6pt]
q_3 & 0
\\[6pt]
q_4 & 0
\\[6pt]
q_5 & 0 
\\[6pt]
q_6 & 0
\\[6pt]
q_7 & 0
\\[6pt]
q_8 & 0
\\[6pt]
q_9 & 0
\\[6pt]
q_{10} & 0 
\\[6pt]
q_{11} & \frac{2((1-r)^2+r^2)}{r(2-r)}
\\[6pt]
q_{12} & \frac{2((1-r)^2+r^2)(\tilde{\sigma}r(2-r)+(1-r)^2)}{r(2-r)}
\end{array}
\end{align}

In Scenario $(iii)$ we see that states in $\mathcal S - \{q_{11}, q_{12}\}$ give the order $O(N)$ of $\text{Cov}_q[T_i, T_j]$ is $0$. The only nonzero terms are $q_{11}$ and $q_{12}$. For states $q_{11}$ and $q_{12}$ the order $O(N)$ term of $\text{Cov}_q[T_i, T_j]$ arises from the $O(1/N)$ probability of a coalesce event before any recombination occurs. The expression $R:=(1-r)^2+r^2$ in these formulas arises from the sum of probabilities that chromosomes both undergo recombination or both don't undergo recombination since assuming that the chromosomes pick the same parent, this is twice the chance both loci pairs coalesce simultaneously.

\section{Code used}\label{A:Code}

The code for the simulations and computations for Theorem \ref{T:q12} can be found in this link: \url{https://github.com/dkogan7/coalescence-computations}.


\bibliographystyle{elsarticle-harv} 
\bibliography{Bio,Math}






\end{document}